\newcommand{\rmE}{{e}}
\newcommand{\C}{\mathbb{C}}
\newcommand{\CB}{\mathcal{B}}
\newcommand{\CC}{\mathcal{C}}
\newcommand{\CD}{\mathcal{D}}
\newcommand{\CH}{\mathcal{H}}
\newcommand{\CL}{\mathcal{L}}
\newcommand{\CN}{\mathcal{N}}
\newcommand{\CO}{\mathcal{O}}
\newcommand{\Cov}{\operatorname{Cov}}
\newcommand{\CP}{\mathcal{P}}
\newcommand{\CX}{\mathcal{X}}
\newcommand{\E}{\mathbb{E}}
\newcommand{\Lip}{\operatorname{Lip}}
\newcommand{\N}{\mathbb{N}}
\newcommand{\quark}{\cdot}
\newcommand{\R}{\mathbb{R}}
\newcommand{\range}{\operatorname{Range}}
\newcommand{\rC}{\mathrm{C}}
\newcommand{\rH}{\mathrm{H}}
\newcommand{\rL}{\mathrm{L}}
\newcommand{\tr}{\operatorname{tr}}
\newcommand{\vspan}{\operatorname{span}}
\newtheorem{theorem}{Theorem}
\newtheorem{lemma}[theorem]{Lemma}
\newtheorem{corollary}[theorem]{Corollary}
\newtheorem{proposition}[theorem]{Proposition}
\begin{document}
\begin{frontmatter}

\title{Sampling conditioned hypoelliptic diffusions\thanksref{T1}}
\runtitle{Sampling conditioned hypoelliptic diffusions}

\thankstext{T1}{Supported by EPSRC Grant EP/E002269/1.}

\begin{aug}
\author[A]{\fnms{Martin} \snm{Hairer}\ead
[label=e1]{M.Hairer@warwick.ac.uk}\ead[label=e4]{mhairer@cims.nyu.edu}},
\author[A]{\fnms{Andrew M.} \snm{Stuart}\thanksref{t2}\ead
[label=e2]{A.M.Stuart@warwick.ac.uk}} and
\author[B]{\fnms{Jochen} \snm{Voss}\corref{}\ead
[label=e3]{J.Voss@leeds.ac.uk}}
\runauthor{M. Hairer, A. M. Stuart and J. Voss}
\affiliation{University of Warwick, University of Warwick and
University of Leeds}
\address[A]{M. Hairer\\
A. Stuart\\
Mathematics Institute\\
University of Warwick\\
Coventry CV4 7AL\\
United Kingdom\\
\printead{e1}\\
\phantom{E-mail: }\printead*{e4}\\
\phantom{E-mail: }\printead*{e2}}
\address[B]{J. Voss\\
Department of Statistics\\
University of Leeds\\
Leeds LS2 9JT\\
United Kingdom\\
\printead{e3}}
\end{aug}

\thankstext{t2}{Supported by ERC and EPSRC.}

\received{\smonth{8} \syear{2009}}
\revised{\smonth{4} \syear{2010}}

%
\begin{abstract}
A series of recent articles introduced a method to construct
stochastic partial differential equations (SPDEs) which are
invariant with respect to the distribution of a given conditioned
diffusion. These works are restricted to the case of elliptic
diffusions where the drift has a gradient structure and the
resulting SPDE is of second-order parabolic type.

The present article extends this methodology to allow the
construction of SPDEs which are invariant with respect to the
distribution of a class of hypoelliptic diffusion processes, subject
to a bridge conditioning, leading to SPDEs which are of fourth-order
parabolic type. This allows the treatment of more realistic
physical models, for example, one can use the resulting SPDE to study
transitions between meta-stable states in mechanical systems with
friction and noise. In this situation the restriction of the drift
being a gradient can also be lifted.
\end{abstract}

%
\begin{keyword}[class=AMS]
\kwd[Primary ]{60H15}
\kwd[; secondary ]{60G35}.
\end{keyword}
\begin{keyword}
\kwd{Stochastic partial differential equations}
\kwd{fourth-order SPDEs}
\kwd{hypoelliptic diffusions}
\kwd{conditioned stochastic ordinary differential equations}.
\end{keyword}

\end{frontmatter}

\section{Introduction}

In previous works (see, e.g.,
\cite{StuartVossWiberg04,HairerStuartVossWiberg05,HairerStuartVoss07} or
\cite{HairerStuartVoss08} for a review) we described an SPDE-based
method to sample paths from SDEs of the form
%
%
\begin{equation}\label{E:sde1}
\dot x(t) = f(x(t)) + \dot w(t)\qquad
\forall t\in[0,T],
\end{equation}
where $\dot w$ is white noise, conditioned on several different types
of events. The method works by introducing an ``algorithmic time''
$\tau$ and constructing a second-order SPDE of the form
%
%
\begin{eqnarray}\label{E:spde1}
\partial_\tau x(\tau, t)
= \partial_t^2 x(\tau,t )
+ \CN(x(\tau,t))
+ \sqrt{2} \,\partial_\tau w(\tau, t)\nonumber\\[-8pt]\\[-8pt]
&&\eqntext{\forall(\tau,t)\in\R_+\times[0,T],}
\end{eqnarray}
which has $t$ as its space variable. Here $\partial_\tau w(\tau, t)$ is
space--time white noise. The nonlinearity $\CN$ and the boundary
conditions of the differential operator $\partial_t^2$ are constructed such
that, in stationarity, the distribution of the random function $t
\mapsto x(\tau, t)$ coincides with the required conditioned
distribution. See also \cite{Reznikoff-VandenEijnden05}.
It transpires that the distribution of (\ref{E:sde1})
under the bridge conditions $x(0) = x(T) = 0$ corresponds the choice
\[
\CN_j(x) = - f_i(x) \,\partial_j f_i(x) - \tfrac12 \,\partial_{ij}^2 f_i(x)
\]
(written using Einstein's summation convention) and use of Dirichlet
boundary conditions for $\partial_t^2$.

Assuming ergodicity of the sampling SPDE, one can now solve the
sampling problem by simulating a solution to (\ref{E:spde1}) up to a
large time $\tau$ and then taking $t \mapsto x(\tau, t)$ as an
approximation to a path from the conditioned SDE. The resulting
sampling method has many applications, some of which are described in
\cite{ApteHairerStuartVoss07}. The
biggest restrictions of this method are that the derivation requires
the drift $f$ to have some gradient structure and the diffusion matrix
[chosen to be the identity matrix in (\ref{E:sde1}) above] to be
invertible.

In this article we consider the different problem of sampling
conditioned paths of the second-order SDE
%
%
\begin{equation}\label{E:sde2}
m \ddot x(t)
= f(x(t))
- \dot x(t)
+ \dot w(t)\qquad
\forall t\in[0,T],
\end{equation}
conditioned on $x(0) = x_-$ and $x(T) = x_+$.
Equation (\ref{E:sde2}) could, for example, describe the time
evolution of a noisy mechanical system with inertia and friction.
Rewriting this second-order SDE as a system of first-order SDEs for
$x$ and $\dot x$ leads to a drift which is in general not a gradient
(even in the case when $f$ itself is one) and, since the noise only
acts on $\dot x$, one obtains a singular diffusion matrix. Thus, this
problem is outside the scope of the previous results. However,
it has enough structure so that it still can
be treated within a similar framework. Indeed, we
derive in this article a fourth-order SPDE of the form
%
%
\begin{eqnarray}\label{E:spde2}
\partial_\tau x(\tau,t)
= (\partial_t^2 - m^2\,\partial_t^4) x(\tau,t)
+ \CN(x)(\tau,t)
+ \sqrt{2} \,\partial_\tau w(\tau, t)\nonumber\\[-8pt]\\[-8pt]
&&\eqntext{\forall(\tau,t)\in\R_+\times[0,T],}
\end{eqnarray}
where, again, the boundary conditions and the drift term $\CN$ are
chosen in such a way that the conditioned distribution
of (\ref{E:sde2}) is stationary for (\ref{E:spde2}).

One surprising fact about this result is that it does \textit{not}
require $f$
to be a gradient. In our earlier works, even the appropriate
definition of solutions for the (formal) second-order SPDE derived to
sample conditioned paths of (\ref{E:sde1}) in the nongradient case is
not clear (see \cite{HairerStuartVoss07}, Section 9 or
\cite{ApteHairerStuartVoss07}, Section 9.2); the analysis for elliptic
equations is thus restricted to the gradient case. In contrast, the
greater regularity of solutions to SPDE (\ref{E:spde2}) here, sampling
conditioned paths of (\ref{E:sde2}), allows us to obtain existence
results for the fourth-order SPDEs arising without any gradient
requirements on $f$.

In the special case where $f$ is a gradient and $f(x_-) = f(x_+) = 0$,
the components of the nonlinearity $\CN$ can be written as
\[
\CN_j(x) =
- f_i(x) \,\partial_j f_i(x)
- m \,\partial_t x_i \,\partial_t x_k \,\partial_{jk}^2 f_i(x)
+ m \,\partial_t \bigl( \partial_t x_i(\partial_i f_j(x) +
\partial_j f_i(x)) \bigr)
\]
(using Einstein's summation convention again). It is tempting to try
to derive (\ref{E:spde1}) by taking the limit $m\downarrow0$
in (\ref{E:spde2}), in particular since the first terms of the
corresponding nonlinearities coincide. It transpires that taking this
limit is not entirely trivial: one needs to argue that on one hand,
$m\,\partial_t x_i \,\partial_t x_k \to\frac12 \delta_{ik}$ as
$m\downarrow0$, but
that the term $m \,\partial_t [ \partial_t x_i (\partial_i f_j +
\partial_j f_i)]$
becomes negligible in the limit. Nevertheless, this argument can be
made exact; see \cite{Hairer10} for a rigorous derivation of the
required limiting procedure.

One novelty of this article compared to earlier work like
\cite{Zabczyk88,HairerStuartVoss07} is that there is no natural Banach
space (like the space of continuous functions) on which the
nonlinearity is well defined and on which the linearized equation
generates a contraction semigroup. The reason for this is that the
linear operator of the equations studied in this article is a
fourth-order differential operator. Another technical difficulty
stems from the fact that the nonlinearity $\CN$ has very weak
dissipativity and regularity properties.

While preparing this text, we performed some numerical simulations on
the fourth-order SPDE presented here. Our aim was to study whether
the SPDE could be used as the basis of an infinite-dimensional MCMC
method. Different from the situation in earlier articles, these
simulations proved prohibitively slow and the resulting method does
not seem like a useful approach to sampling. This is mainly due to
the fact that the convergence time to equilibrium seems to grow like $T^4$
and thus can get very big for nontrivial problems. In the gradient
case, since the system converges to the second-order SPDE as $m\to0$,
one could expect improved convergence rates for small values of $m$.
However, the theory developed in \cite{Hairer10} suggests that the
relevant lengthscale for the small-$m$ problem is $m$, suggesting that
one would need numerical simulations that resolve significantly
smaller scales than that in order to obtain reliable results. Again,
this would lead to inefficient numerical methods even in the case of
small $m$. Consequently, we do not include our simulation results in
this article.

For a number of articles considering fourth-order (S)PDEs, see, for
example, \mbox{\cite{BloMaPaWa01,DaPraDe96}} and \cite{LauCueMa96}.
Alternative methods to construct solutions of SPDEs and to identify
their stationary distributions are based on the theory of Dirichlet
forms (see, e.g., \cite{RoMa92}).

The text is structured as follows: in Section \ref{S:problem} we give
a detailed description of the sampling problem under consideration and
formulate the main result in Theorem \ref{T:answer}. The proof of
this result is given in Sections \ref{S:operator}, \ref{S:linear}
and \ref{S:nonlinear}.

\subsection*{Notation}

Throughout the article we will use the notation as introduced above: by $s,
t\in[0,T]$ we denote ``physical time,'' that is, the time
variable in equations like (\ref{E:sde1}) and (\ref{E:sde2}) which
define the target distributions. By $\sigma, \tau\geq0$ we denote
``algorithmic time,'' that is, the time variable in sampling
equations like (\ref{E:spde1}) and (\ref{E:spde2}). Thus, in the
sampling SPDEs, $\tau$ takes the role of time and $t$ takes the role
of space.


\section{The sampling problem}
\label{S:problem}

In this section we give the full statement of the sampling problem we
want to solve; the main result is contained in Theorem \ref{T:answer}.

First consider the following unconditioned second-order SDE:
%
%
\begin{eqnarray}\label{E:sde2-careful}
m \ddot x(t)
&=& f(x(t)) - \dot x(t) + \dot w(t) \qquad\forall t\in
[0,T], \nonumber\\[-8pt]\\[-8pt]
x(0) &=& x_0,\qquad \dot x(0) = v_0,
\nonumber
\end{eqnarray}
where the solution $x$ takes values in $\R^d$, $m>0$ is a constant,
$f\dvtx\R^d\to\R^d$ is a given function and $w$ is a standard
Brownian motion on $\R^d$. The initial conditions $x_0$ and $v_0$ are
either deterministic or random variables independent of $w$. The
solution to this SDE can be interpreted as the time evolution of the
state of a mechanical system with friction under the influence of
noise. In this case $m$ would be the \textit{mass} and $f$ would be
an \textit{external force field}. Models like this are, for example,
widely used in molecular dynamics since, for conservative forces $f$,
they describe Hamiltonian systems in contact with a heat bath. In
this context, equation (\ref{E:sde2-careful}) is called the
\textit{Langevin equation}. The limiting case $m=0$ corresponds to
the \textit{Brownian dynamics} (\ref{E:sde1}).
\begin{remark}\label{R:rescaling}
Arbitrary constants in front of the $\dot x$ and $\dot w$ terms can
be introduced using a scaling argument: let $\beta, \gamma>0$ and
define the process $y$ by $y(t) = x(t / \gamma) / \sqrt{\beta/2}$.
Then $y$ solves the SDE
\[
\tilde m \ddot y(t)
= \tilde f(y(t))
- \gamma \dot y(t)
+ \sqrt{\frac{2\gamma}{\beta}} \dot w(t)\qquad
\forall t\in[0, \tilde T],
\]
where $\tilde m = \gamma^2 m$, $\tilde f(x) = f(\sqrt{\beta/2}
x)/\sqrt{\beta/2}$ and $\tilde T = \gamma T$. Thus, by
rescaling $T$, $m$ and $F$ we can assume $\beta= 2$ and $\gamma=
1$ without loss of generality.
\end{remark}

For our analysis we rewrite the second-order
SDE (\ref{E:sde2-careful}) as a system of first-order SDEs in the
variables $x$ and $\dot x$. We get
%
%
\begin{eqnarray}\label{E:target}
dx(t) &=& \dot x(t) \,dt,\qquad
x(0) = x_0, \nonumber\\[-8pt]\\[-8pt]
m d\dot x(t) &=& f(x(t)) \,dt - \dot x(t) \,dt
+ dw(t),\qquad
\dot x(0) = v_0.
\nonumber
\end{eqnarray}

In the Hamiltonian case $f(x) = - \nabla V(x)$ and provided that the
potential $V$ is sufficiently regular, it can be checked that the
Boltzmann--Gibbs distribution
\[
\exp\bigl(-2 \bigl(V(x) + \tfrac12 m \dot x^2 \bigr)\bigr) \,d\dot
x \,dx
\]
is invariant for (\ref{E:target}). If $V$ is sufficiently coercive,
this distribution can be normalized to a probability
distribution. Note that in equilibrium, the position $x$ and the
velocity $\dot x$ are independent. Thus, in stationarity, the
velocity satisfies $\dot x(t) \sim\CN(0, 1/2m)$ for all
$t\in[0,T]$. We will, even for the nongradient case, use this
distribution for the initial condition for $\dot x$.
\begin{definition}\label{D:target}
For $T>0$, $x_-,x_+\in\R^d$ and\vspace*{1pt} $f\dvtx\R^d\to\R^d$, define
$Q_f^{0,x_-}$ to be the distribution of the process $x$ given by
(\ref{E:target}) where $x_0=x_-$ and $v_0 \sim\CN(0,1/2m
)$, independent of $w$. Define the \textit{target
distribution} $Q_f^{0,x_-;T,x_+}$ to be the distribution of $x$
under $Q_f^{0,x_-}$, conditioned on $x(T) = x_+$.
\end{definition}

The sampling problem considered in the rest of this article is to find
a stochastic process with values in $\rL^2([0,T], \R^d)$
which has the target distribution $Q_f^{0,x_-;T,x_+}$ as its
stationary distribution. Note that $Q_f^{0,x_-;T,x_+}$ is just the
distribution of $x$ and not of the pair $(x,\dot x)$ and thus is a
probability measure on $\rL^2([0,T], \R^d)$. Considering
this distribution is sufficient since for solutions
of (\ref{E:target}) the initial condition $x(0)=x_-$ allows to find a
bijection between the paths $x$ and $\dot x$. If $f$ is a gradient,
the distribution $Q_f^{0,x_-;T,x_+}$ coincides with the distribution
of the process in stationarity, conditioned on $x(0)=x_-$
and $x(T)=x_+$.
\begin{definition}\label{def:L-and-mean}
Let $L$ denote the formal differential operator
\[
L = - m^2 \,\partial_t^4 + \partial_t^2
\]
and define $\CL$ to be this differential operator on the space
$\rL^2([0,T], \R^d)$ equipped with the domain
%
%
\begin{eqnarray}\label{E:bch}
\CD(\CL) &=& \{ x\in\rH^4 \mid
x(0)=x(T)=0,\nonumber\\[-8pt]\\[-8pt]
&&\hspace*{4.1pt}m \,\partial_t^2 x(0) = \partial_t x(0),
m \,\partial_t^2 x(T) = -\partial_t x(T) \},\nonumber
\end{eqnarray}
where $\rH^4 = \rH^4([0,T], \R^d)$ is the Sobolev space of
functions with square integrable generalized derivatives up to the
fourth order. Furthermore, let $\bar x\dvtx[0,T]\to\R^d$ be the
solution of the boundary value problem $L \bar x = 0$ with boundary
conditions
%
%
\begin{eqnarray}\label{E:xbar-bc}
\bar x(0)&=&x_-,\qquad
\bar x(T)=x_+,\nonumber\\[-8pt]\\[-8pt]
m \,\partial_t^2 \bar x(0 ) &=& \partial_t \bar x(0),\qquad
m \,\partial_t^2 \bar x(T) = - \partial_t \bar x(T).\nonumber
\end{eqnarray}
\end{definition}

We will see in Lemma \ref{L:stat-dist-linear} that the operator $\CL$
given by this definition is self-adjoint and negative definite.
\begin{theorem}\label{T:answer}
Consider the $\rL^2([0,T], \R^d)$-valued equation
%
%
\begin{equation}\label{E:SPDE}
dx(\tau) = \CL\bigl(x(\tau)-\bar x\bigr) \,d\tau
+ \CN(x(\tau)) \,d\tau
+ \sqrt{2} \,dw(\tau),\qquad
x(0) = x_0.
\end{equation}
Here $\CL$ and $\bar x$ are given in
Definition \ref{def:L-and-mean}, $w$ is a cylindrical Wiener
process, $x_0\in\rL^2([0,T],\R^d)$ and
%
%
\begin{eqnarray}\label{E:drift}
\CN_k(x) &=&
- f_i(x) \,\partial_k f_i(x)
+ m \,\partial_t x_i \,\partial_t x_j \,\partial_{ij}^2
f_k(x) \nonumber\\
&&{} - \partial_t x_i \bigl(\partial_i f_k(x) - \partial_k
f_i(x)\bigr)
+ m \,\partial_t^2 x_i \bigl(\partial_i f_k(x) + \partial_k
f_i(x)\bigr) \\
&&{} + m \bigl( f_k(x_-)\,\partial_t\delta_0 - f_k(x_+)\,\partial
_t\delta_T \bigr)
\nonumber
\end{eqnarray}
for $k=1, \ldots, d$ where we used Einstein's summation convention
over repeated indices, $\delta_0$ and $\delta_T$ are the Dirac
distributions at $0$ and $T$, respectively, and all derivatives are
taken in the distributional sense. Assume that $f\in\rC^2(\R^d,
\R^d)$, that the partial derivatives $\partial_i f$ and $\partial
_{ij} f$ are
bounded and globally Lipschitz continuous for all $i,j=1,\ldots, d$,
and that there are constants $\beta<1$ and $c>0$ such that $|f(x)|
\leq|x|^\beta+c$ for all $x\in\R^d$. Furthermore assume that the
SDE (\ref{E:target}) a.s. has a solution up to time $T$. Then the
following statements hold:
\begin{enumerate}[(a)]
\item[(a)] For every $x_0\in\rL^2([0,T],\R^d)$,
equation (\ref{E:SPDE}) has a unique, global, continuous mild
solution with $\E(\|x(\tau)\|_{\rL^2}^2) < \infty$ for all
$\tau>0$.
\item[(b)] For $\tau> 0$, the solution $x(\tau)$ a.s. takes values
in the Sobolev space $\rH^1([0,T]$, $\R^d)$.
\item[(c)] The distribution $Q_f^{0,x_-;T,x_+}$ given by
Definition \ref{D:target} is invariant for (\ref{E:SPDE}).
\end{enumerate}
\end{theorem}
\begin{remark}
The sub-linear growth-condition $|f(x)| \leq|x|^\beta+c$ on the
drift seems quite technical. The condition is only required for the
bounds in Lemma \ref{L:U-bound}. We believe that an additional,
linear drift term can be added by incorporating it into the linear
operator $\CL$, following a similar procedure in \cite
{HairerStuartVossWiberg05}.
\end{remark}
\begin{remark}
In \cite{HairerStuartVoss07}, Remark 5.5, we see that the terms
involving derivatives of Dirac distributions can be interpreted as
modifications to the boundary conditions. Proceeding this way, we
see that (\ref{E:SPDE}) is formally equivalent to the SPDE
\begin{eqnarray*}
\partial_\tau x_k(\tau, t)
&=& L x_k (\tau, t)
- f_i(x) \,\partial_k f_i(x)
+ m \,\partial_t x_i\, \partial_t x_j \,\partial_{ij}^2 f_k(x) \\
&&{} - \partial_t x_i \bigl(\partial_i f_k(x) - \partial_k
f_i(x)\bigr)
+ m \,\partial_t^2 x_j \bigl(\partial_j f_k(x) + \partial_k
f_j(x)\bigr)\\
&&{} + \sqrt{2} \,\partial_\tau w_k(\tau, t),
\end{eqnarray*}
where $\partial_\tau w$ is space--time white noise, endowed with the
boundary conditions
\begin{eqnarray*}
x(0) &=& x_-,\qquad
x(T) = x_+, \\
m \,\partial_t^2 x(0) &=& \partial_t x(0) + f(x_-),\qquad
m \,\partial_t^2 x(T) = - \partial_t x(T) + f(x_+).
\end{eqnarray*}

In the one-dimensional case $f\dvtx\R\to\R$ this SPDE simplifies further
to
\[
\partial_\tau x(\tau, t)
= L x(\tau, t)
- f(x) f'(x)
+ m (\partial_t x)^2 f''(x)
+ 2m \,\partial_t^2 x f'(x)
+ \sqrt{2} \,\partial_\tau w(\tau, t).
\]
\end{remark}
\begin{remark}
Using a standard bootstrapping argument like the one in the proof of
\cite{Hairer08}, Theorem 6.5, one can show that, in fact, the
solution $x$ of (\ref{E:SPDE}) takes values in the Sobolev space
$\rH^r([0,T], \R^d)$ for every $r<3/2$.
\end{remark}

The remainder of the article gives a proof of
Theorem \ref{T:answer}. We start the argument, in
Section \ref{S:operator} by collecting some results about the
differential operator $\CL$. Section~\ref{S:linear} shows that the
theorem holds for the linear case $f\equiv0$, in which case
$\CN\equiv0$. Finally, Section \ref{S:nonlinear} completes the
proof by showing that introduction of the drift $\CN$ changes the
stationary distribution of (\ref{E:SPDE}) in the correct way to
account for a nonvanishing $f$.


\section{Analysis of the linear operator}
\label{S:operator}

This section collects some results about the operator $\CL$ from
Definition \ref{def:L-and-mean}. Since we are only interested in the
operator itself and not in the full SPDE, in addition to the
scaling-argument from Remark \ref{R:rescaling}, we can rescale $t$.
Thus, throughout Section \ref{S:operator}, we will consider the
operator $\bar\CL$ defined as
\[
\bar\CL= - \partial_t^4 + \gamma^2 \,\partial_t^2,
\]
where $\gamma= \frac{T}{\pi m}$, on the domain
\begin{eqnarray*}
\CD(\bar\CL) &=& \biggl\{ x\in\rH^4 \bigm|
x(0)=x(\pi)=0,\\
&&\hspace*{4.8pt}
\frac{T}{\pi\gamma} \,\partial_t^2 x(0) = \partial_t x(0),
\frac{T}{\pi\gamma} \,\partial_t^2 x(\pi) = -\partial_t x(\pi)
\biggr\}.
\end{eqnarray*}
Then, after rescaling $t$, $\bar\CL$ differs from the operator
$\CL$ from Definition \ref{def:L-and-mean} only by multiplication of a
positive constant.

Throughout the rest of the paper we will use the following notation:
we denote by $[S(\tau)]_{\tau\geq0}$ the semigroup associated to
$\bar\CL$ on $\CH=\rL^2([0,\pi], \R^d)$ and by $\CH
_\alpha=
\CD[(-\bar\CL)^\alpha]$ the associated interpolation spaces.

\subsection{Approximation to the spectral decomposition}

\begin{lemma}
$\bar\CL$ is a self-adjoint, negative definite operator on
$\rL^2([0,\pi], \R^d)$.
\end{lemma}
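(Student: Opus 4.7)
The plan is to extract both properties from a single integration-by-parts identity that is tailored to the Robin-type boundary conditions defining $\CD(\bar\CL)$, and then to upgrade symmetry to self-adjointness via the standard range criterion.

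First I would compute, for $x, y \in \CD(\bar\CL)$, the quantity $\langle -\bar\CL x, y\rangle$ by integrating the $\d_t^4$ part by parts twice and the $\d_t^2$ part once. The Dirichlet conditions $x(0) = x(\pi) = y(0) = y(\pi) = 0$ kill every boundary contribution except $-[\d_t^2 x \cdot \d_t y]_0^\pi$. Writing $c = T/(\pi\gamma) > 0$ and substituting the Robin relations $\d_t y(0) = c\,\d_t^2 y(0)$ and $\d_t y(\pi) = -c\,\d_t^2 y(\pi)$ converts this remainder into the manifestly symmetric bilinear term $c\bigl[\d_t^2 x(0)\,\d_t^2 y(0) + \d_t^2 x(\pi)\,\d_t^2 y(\pi)\bigr]$, yielding
\begin{equ}[E:BCform]
  \langle -\bar\CL x, y\rangle
  = \int_0^\pi \bigl(\d_t^2 x \cdot \d_t^2 y + \gamma^2\,\d_t x \cdot \d_t y\bigr)\,dt
    + c\sum_{t_* \in \{0,\pi\}} \d_t^2 x(t_*) \cdot \d_t^2 y(t_*).
\end{equ}
Symmetry of $\bar\CL$ is immediate from~\eqref{E:BCform}. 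Setting $y = x$ and combining with $x(0) = 0$ and Poincar\'e's inequality gives $\langle -\bar\CL x, x\rangle \geq \gamma^2 \|\d_t x\|_{\rL^2}^2 \geq C\|x\|_\CH^2$ for some $C > 0$, so $\bar\CL$ is strictly negative definite.

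To promote symmetry to self-adjointness I would verify that $\bar\CL - \lambda$ is surjective for some $\lambda > 0$. For $g \in \CH$ the problem $(\bar\CL - \lambda) x = g$ is a fourth-order linear ODE; I would solve it by variation of parameters using the four exponentials built from the roots of $\mu^4 - \gamma^2 \mu^2 + \lambda = 0$, then fix the four integration constants from the four boundary conditions in $\CD(\bar\CL)$. The resulting $4 \times 4$ boundary map is injective because any homogeneous solution in $\CD(\bar\CL)$ must vanish by the coercive estimate above; in a finite-dimensional setting injectivity equals invertibility, and hence $\bar\CL - \lambda$ is onto. The standard criterion \emph{symmetric~+~surjective resolvent} then gives self-adjointness.

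The only step that requires real care is the verification that the $4 \times 4$ boundary matrix is non-singular; a direct determinant calculation looks unpleasant, but the energy argument in the previous paragraph removes the need for it. A conceptually cleaner alternative, which I would sketch as a backup, is to recognise $-\bar\CL$ as the self-adjoint operator associated with the coercive quadratic form in~\eqref{E:BCform}, defined on the closure of $\CD(\bar\CL)$ in the corresponding form norm; this Friedrichs-type construction delivers self-adjointness directly, at the cost of some functional-analytic setup around the fact that $x \mapsto \d_t^2 x(t_*)$ is not continuous on $\rH^2$.
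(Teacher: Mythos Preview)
Your proof is correct and follows essentially the same strategy as the paper: both derive an integration-by-parts identity showing that $\langle -\bar\CL x, y\rangle$ equals a manifestly symmetric, positive form, and then appeal to a standard criterion for self-adjointness. The only differences are cosmetic: the paper writes the boundary contribution as $\d_t x(0)\,\d_t y(0) + \d_t x(\pi)\,\d_t y(\pi)$ (using the Robin relation on $x$ rather than $y$), and for self-adjointness it simply cites Reed--Simon, section~VIII, whereas you spell out the resolvent-surjectivity argument. Incidentally, the paper's form of the boundary term---in first derivatives rather than second---is exactly what would make your Friedrichs backup clean, since $x\mapsto \d_t x(t_*)$ \emph{is} continuous on $\rH^2$.
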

\begin{pf}
Using partial integration it is easy to see that
\begin{eqnarray*}
\langle x, \CL y\rangle
&=& - m^2 \int_0^\pi\partial_t^2 x \,\partial_t^2 y \,dt
- \int_0^\pi\partial_t x \,\partial_t y \,dt \\
&&{}
- m \bigl( \partial_t x(0) \,\partial_t y(0) + \partial_t x(\pi)\,
\partial_t y(\pi) \bigr)
\end{eqnarray*}
for all $x,y\in\CD(\CL)$, that is, the operator $\CL$ is
symmetric and negative. Its self-adjointness can be checked in \cite
{ReeSim72}, Section VIII.
\end{pf}
\begin{lemma}\label{L:eigenvalues}
Let $\lambda_k$, $k\in\N$ be the eigenvalues of $-\bar\CL$ and
$e_k$ be
the corresponding eigenfunctions. Define, furthermore,
\[
\bigl( f_k^{(i)} \mid i=1,2,3,4 \bigr)
= \bigl( \sin kt, \cos kt, e^{-kt}, e^{-k(\pi-t)} \bigr).
\]
Then the following statements hold:
\begin{enumerate}[(a)]
\item[(a)] The eigenvalues of $\bar\CL$ satisfy $\lambda_k = k^4 +
\CO(k^2)$.
\item[(b)] There exist functions $g_k^{(i)}$ such that $e_k(t) = \sin
(kt) + {1\over k}\sum_{j=1}^4 g_k^{(i)}(t) f_k^{(i)}(t)$ for all
$t\in[0,\pi]$ and such that ${\sup_{j=1}^4\sup_{k\in\N}}
\|g_k^{(j)}\|_{\rC^j} < \infty$ for every $j\ge0$.
\end{enumerate}
\end{lemma}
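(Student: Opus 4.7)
The plan is to exploit the fact that $\bar{\mathcal L}$ is a constant-coefficient operator on $[0,\pi]$, so the eigenvalue equation $\partial_t^4 e - \gamma^2\partial_t^2 e = \lambda e$ can be solved explicitly. Its characteristic polynomial $\mu^4 - \gamma^2\mu^2 - \lambda = 0$ has, for $\lambda>0$, two real roots $\pm\alpha$ and two imaginary roots $\pm i\beta$ with $\alpha^2 = (\gamma^2 + \sqrt{\gamma^4+4\lambda})/2$, $\beta^2 = (\sqrt{\gamma^4+4\lambda}-\gamma^2)/2$, so that $\alpha^2-\beta^2 = \gamma^2$ and $\alpha^2\beta^2=\lambda$. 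I would write a general solution in the basis adapted to the boundary $\{0,\pi\}$, namely
\begin{equation*}
  e(t) = A\sin(\beta t) + B\cos(\beta t) + C e^{-\alpha t} + D e^{-\alpha(\pi-t)},
\end{equation*}
because with this choice the coupling between the two endpoints through the exponential terms is of size $e^{-\alpha\pi}$, which is negligible in the large-$\lambda$ regime.

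Next I would impose the four boundary conditions in $\CD(\bar{\mathcal L})$, producing a homogeneous $4\times 4$ system $M(\lambda)(A,B,C,D)^\top = 0$. Expanding $\det M(\lambda)$ and dropping all $e^{-\alpha\pi}$-terms, the conditions at $t=0$ give $B+C=0$ together with $B=-C \sim mA\beta/(\alpha^2+\beta^2)$, hence $B,C = O(A/k)$ once $\alpha,\beta\sim k$. The conditions at $t=\pi$ then give $D\approx -A\sin(\beta\pi) + O(A/k)$ together with a compatibility relation that, after eliminating $B,C,D$, reduces to a transcendental equation of the form $\tan(\beta\pi) = 1/(2m\beta) + O(1/\beta^2) + O(e^{-\alpha\pi})$. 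Standard monotonicity and intermediate-value arguments show that for each sufficiently large integer $k$ there is a unique root $\beta_k$ close to $k$, with $\beta_k = k + O(1/k)$. Together with $\lambda = \beta^2(\beta^2+\gamma^2)$, this yields $\lambda_k = k^4 + O(k^2)$, proving (a). (The finitely many small-$k$ eigenvalues, where the asymptotic analysis fails, can be listed separately and absorbed into the constants.)

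For (b) I would normalise so that $A=1$, so $B,C,D = O(1/k)$. The leading piece $\sin(\beta_k t)$ differs from $\sin(kt)$ by $\sin(kt)(\cos(\delta_k t)-1) + \cos(kt)\sin(\delta_k t)$ with $\delta_k = \beta_k - k = O(1/k)$; since $\cos(\delta_k t)-1$ and $\sin(\delta_k t)$ have $C^j$-norms uniformly bounded in $k$ (in fact $O(1/k)$), this gives a contribution of the form $(1/k)\sum_{i=1,2} g_k^{(i)}(t) f_k^{(i)}(t)$ with the required smoothness. The term $B\cos(\beta_k t)$ is handled the same way, contributing another $f_k^{(2)}$-piece. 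For the exponential pieces, I write $e^{-\alpha_k t} = e^{-kt}\, e^{-(\alpha_k-k)t}$ with $\alpha_k - k = \gamma^2/(2k) + O(1/k^3)$; the smooth factor $k C e^{-(\alpha_k-k)t}$ is $O(1)$ with all $t$-derivatives uniformly bounded and provides $g_k^{(3)}(t)$, and similarly for the $D$-term providing $g_k^{(4)}(t)$. Collecting these produces exactly the claimed decomposition.

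The main obstacle, and the part needing genuine care, is the uniform $C^j$-bound on the profiles $g_k^{(i)}$. Each derivative of an $f_k^{(i)}$ introduces a factor of $k$, so the decomposition is only useful if the $g_k^{(i)}$ themselves contain no hidden $k$-dependence that could amplify under differentiation. This forces one to solve the quantization equation with enough accuracy that $\alpha_k/k$, $\beta_k/k$, and the amplitudes $kB, kC, kD$ converge to smooth profiles as $k\to\infty$, and to write every trigonometric or exponential correction as a $k$-independent smooth function of $\delta_k t$ or $(\alpha_k-k)t$ multiplied by the oscillatory or exponential carrier $f_k^{(i)}$. Once this bookkeeping is in place, the uniform $C^j$-bounds follow from the fact that $\delta_k$ and $\alpha_k - k$ tend to zero together with all their derived quantities.
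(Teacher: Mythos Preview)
Your proposal is correct and follows essentially the same route as the paper: write the general solution of the constant-coefficient ODE in a basis adapted to the two endpoints, impose the four boundary conditions, and analyse the resulting linear system asymptotically to obtain $\beta_k=k+O(1/k)$ (hence $\lambda_k=k^4+O(k^2)$) and coefficients $B,C,D=O(1/k)$. The only presentational difference is that the paper packages the boundary conditions into a $4\times4$ matrix $A_\mu$, computes its determinant to locate the eigenvalues, and then invokes Kato's finite-dimensional perturbation theory to identify the null vector $\xi^{(0)}=(0,0,\tfrac12,-\tfrac12)$, whereas you eliminate the unknowns by hand to reach an explicit transcendental equation for $\beta$; both arguments yield the same decomposition of $e_k$.
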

\begin{pf}
Since $\bar\CL$ acts independently on each coordinate, we can assume
$d=1$ without loss of generality. The eigenfunctions of $\bar\CL$ can be
written in the form
\[
x(t) = \xi_1 \rmE^{\kappa_+ (t-\pi)} + \xi_2 \rmE^{-\kappa_+ t}
+ \xi_3 \rmE^{i \kappa_- t} + \xi_4 \rmE^{-i \kappa_- t},
\]
where
\[
\kappa_\pm= \sqrt{\sqrt{\mu^4 + {\gamma^4 \over4}} \pm{\gamma
^2 \over2}} = \mu\pm{\gamma^2 \over4\mu} + \CO(1/\mu^3),
\]
with $\lambda= \mu^4$ the corresponding
eigenvalue. The coefficient vector $\xi\in\C^4$ is determined by
the boundary conditions: for $x$ to be an eigenfunction of $\bar\CL$,
$\xi$ must
satisfy $A_\mu\xi= 0$ where
\begin{eqnarray*}
A_\mu&=& \left(\matrix{
\rmE^{-\kappa_+\pi}                     & 1                                     \cr
1                                       & \rmE^{-\kappa_+\pi}                   \cr
(\alpha\kappa_+-1)\rmE^{-\kappa_+\pi}   & \alpha\kappa_+ +1                     \cr
\alpha\kappa_++1                        & (\alpha\kappa_+-1)\rmE^{-\kappa_+\pi}}\right.
\\
&&\hspace*{-3.6pt}
\left.\matrix{
& 1                                         & 1                                        \cr
& \rmE^{i\kappa_-\pi}                       & \rmE^{-i\kappa_-\pi}                     \cr
& -\alpha\kappa_--i                         & -\alpha\kappa_-+i                        \cr
& (-\alpha\kappa_-+i)\rmE^{i\kappa_-\pi}    &(-\alpha\kappa_--i)\rmE^{-i\kappa_-\pi}}\right)
\end{eqnarray*}
and $\alpha= T / \pi\gamma$. Setting $\kappa= (\kappa_+ +
\kappa_-)/2 = \mu+ \CO(1/\mu^3)$ for ease of notation, we note that
this equation has nonzero solutions if and only if
\begin{eqnarray*}
0 &=& \det A_\mu= 8i \bigl((\alpha^2 \kappa^2 + \alpha\kappa
)\sin(\kappa_-\pi) - \bigl( \tfrac{1}{2}+\alpha\kappa
\bigr)\cos(\kappa_-\pi)\bigr) + \CO(1/\mu)\\
&=& 8i \alpha^2 \mu^2\sin\mu\pi+ \CO(\mu).
\end{eqnarray*}
It follows immediately that, at least for large values of $\mu$, one
has $\mu= k + \CO(1/k)$ with $k \in\N$ so that $\lambda_k = k^4 +
\CO(k^2)$ as requested. In particular, one has
\[
\kappa_{\pm} = k + {\beta_\pm\over k} + \CO(1/k^2),
\]
for some constants $\beta_\pm\in\R$.
It remains to check the statement about the eigenfunctions.

Given that we already have good control on the eigenvalues, our claim
will follow if we are able to show that one can choose $\xi=
(0,0,{1\over2}, -{1\over2}) + \CO(1/k)$. Expanding $A_\mu$ in powers
of $k$, we obtain
\begin{eqnarray*}
A_\mu&=& k
\pmatrix{
0 & 0 & 0 & 0 \cr
0 & 0 & 0 & 0 \cr
0 & \alpha& -\alpha& -\alpha\cr
\alpha& 0 & -\alpha& -\alpha}
+
\pmatrix{
0 & 1 & 1 & 1 \cr
1 & 0 & (-1)^k & (-1)^k \cr
0 & 1 & -i & i \cr
1 & 0 & ic (-1)^k & -ic(-1)^k}
+ \CO(1/k) \\
&\equiv& k A_\mu^{(0)} + A_\mu^{(1)} + \CO(1/k),
\end{eqnarray*}
for $c = 1 - \alpha\beta_- \pi$. It now follows from standard
perturbation theory (see, e.g., \cite{Kato66}, Theorem II.5.4)
that the eigenvector $\xi$ with eigenvalue $0$ can be written as $\xi
= \xi^{(0)} + \CO(1/k)$, where $\xi^{(0)}$ satisfies $A_\mu^{(0)}
\xi^{(0)} = 0$. Since $A_\mu^{(0)}$ is degenerate, this is, however, not
sufficient to determine $\xi^{(0)}$ uniquely but only tells us that
$\xi^{(0)}$ is of the form $(a+b,a+b,a,b)$ for $a,b\in\R$. In order
to determine $a$ and~$b$, we have to consider the next order which
yields the compatibility condition $A_\mu^{(1)} \xi^{(0)} \in\range
A_\mu^{(0)}$. This compatibility condition can be rewritten as $a+b =
0$, so that we can indeed choose $\xi^{(0)} = (0,0,{1\over2},
-{1\over2})$, as requested.
\end{pf}

\subsection{The relation between interpolation and Sobolev spaces}
\label{S:sobolev}

In this section, we show how the interpolation spaces $\CH_\alpha$
associated to the operator $\bar\CL$ relate to the usual fractional
Sobolev spaces. These results are ``well known'' in the folklore of
the subject. However, in our specific context (especially since we
need to consider fractional exponents), we were not able to derive
them as straightforward corollaries from results in standard textbooks
on function spaces, like \cite{Trieb1,Trieb2,Trieb3}. Because of
this, and since one can find rather short and self-contained proofs,
we prefer to include them here.

Before we turn to this however, we start with a comparison between the
interpolation spaces of the Dirichlet Laplacian and the periodic
Laplacian. These are going to be useful in the sequel.

Let $\Delta_0$ denote the Laplacian on $[0,\pi]$ with Dirichlet
boundary conditions and let $\Delta$ denote the Laplacian on
$[0,2\pi]$ with periodic boundary conditions. These operators are
self-adjoint in $H_0 = \rL^2([0,\pi])$ and $H = \rL
^2([0,2\pi])$,
respectively. We denote by $\rH^s_0$ the domain of $\Delta_0^{s/2}$ and
by $\rH^s$ the domain of $\Delta^{s/2}$ (defined in the usual way
through spectral decomposition). The aim of this section is to study
the correspondence between these two different types of fractional
Sobolev spaces. Denote by $\iota\dvtx\rH^s_0 \to\rH^s$ the map
\[
\iota f(t)
= \cases{
f(t), &\quad for $t \in[0,\pi]$,\cr
-f(2\pi-t), &\quad for $t \in(\pi, 2\pi]$.}
\]
Note that $\iota/ 2$ is an isometry since it maps the eigenfunctions
of $\Delta_0$ into eigenfunctions of $\Delta$. This, therefore, defines
an inclusion $\rH^s_0 \subseteq\rH^s$. A natural left inverse for
$\iota$ is given by the restriction map
\[
rf = f |_{[0,\pi]} \qquad\mbox{for all $f\in\rH^s$.}
\]
However, $r$ is not an isometry and, for $s \ge1/2$, it certainly
does not map $\rH^s$ into $\rH^s_0$ in general (since the constant function
$1$ belongs to every $\rH^s$ but only belongs to $\rH^s_0$ for $s < 1/2$).
We do, however, have the following:
\begin{lemma}\label{lem:restr}
The restriction operator $r$ is bounded from $\rH^s$ into $\rH^s_0$
for any $s
< 1/2$.
\end{lemma}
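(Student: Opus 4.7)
The strategy is to identify both the spectrally-defined space $\rH^s$ (periodic on $[0,2\pi]$) and $\rH^s_0$ (Dirichlet on $[0,\pi]$) with the classical Slobodeckij fractional Sobolev spaces in the range $s\in(0,1/2)$, after which the statement reduces to the tautology that the $L^2$ norm and the Gagliardo double-integral seminorm $\int\int|f(t)-f(u)|^2/|t-u|^{1+2s}\,du\,dt$ cannot increase under restriction to a subinterval.

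On the periodic side, the equivalence of the spectral norm $\sum_{k\in\mathbb{Z}}(1+k^2)^s|\hat f_k|^2$ with the Slobodeckij norm on $[0,2\pi]$ holds for all $s\in(0,1)$ and can be verified by rewriting the Gagliardo seminorm in Fourier coordinates; the resulting Fourier multiplier is comparable to $(1+k^2)^s$ up to constants. On the Dirichlet side, the corresponding equivalence of $\sum_k k^{2s}|a_k|^2$ (with $a_k$ the sine coefficients of $f$ on $[0,\pi]$) with the Slobodeckij norm on $[0,\pi]$ is the crux of the lemma. I would prove it by odd reflection: extending $f$ oddly across $0$ gives a function $\tilde f$ on $[-\pi,\pi]$ whose periodic Fourier coefficients are, up to constants, the sine coefficients of $f$, so the periodic spectral norm of $\tilde f$ equals the Dirichlet spectral norm of $f$. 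Combined with the first equivalence on the doubled circle, matters reduce to showing that the odd extension $f \mapsto \tilde f$ is a bounded operation between the Slobodeckij spaces $W^{s,2}([0,\pi]) \to W^{s,2}([-\pi,\pi])$.

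This last step is the main obstacle, and it is precisely where $s<1/2$ becomes essential. The Gagliardo seminorm of $\tilde f$ produces cross-terms (integrating $t>0$ against $u<0$) of the shape $\int_0^\pi\int_0^\pi|f(t)+f(u)|^2/(t+u)^{1+2s}\,du\,dt$, which must be dominated by the Slobodeckij seminorm of $f$ alone. Bounding $|f(t)+f(u)|^2\le 2|f(t)-f(u)|^2+8|f(u)|^2$ absorbs the first piece into the ordinary Slobodeckij seminorm (on a slightly enlarged integration domain) and reduces the second to the Hardy-type estimate $\int_0^\pi|f(u)|^2/u^{2s}\,du\lesssim\|f\|_{W^{s,2}([0,\pi])}^2$, which is valid exactly when $s<1/2$. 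At the threshold $s=1/2$ the boundary trace would enter and the estimate would fail, consistent with the sharpness of the stated range.
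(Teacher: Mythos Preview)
Your proposal is correct and follows a genuinely different route from the paper's.

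The paper works entirely at the spectral level: it reduces the claim to showing that $A=\Delta_0^{s/2}r\Delta^{-s/2}-r$ is bounded on $L^2$, computes the matrix elements $A_{mn}$ of $A$ explicitly in the sine/cosine bases, dominates them by a simpler kernel $\hat A_{mn}$, and then controls $\|\hat A\|$ via a weighted Schur-type test. The argument is completely self-contained---no Slobodeckij norms, no Hardy inequality---and the restriction $s<\tfrac12$ enters through the convergence of the series $\sum_k k^{2s-2}$ in the bound for $|\langle\hat A\phi_n,\hat A\phi_m\rangle|$.

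Your approach instead passes through the intrinsic Gagliardo description of the fractional spaces and reduces everything to the boundedness of odd reflection $W^{s,2}([0,\pi])\to W^{s,2}(\mathbb T)$, which in turn rests on the fractional Hardy estimate $\int_0^\pi|f(u)|^2u^{-2s}\,du\lesssim\|f\|_{W^{s,2}}^2$ for $s<\tfrac12$. This is more conceptual and plugs the lemma directly into standard extension theory (the same Hardy bound is exactly what makes \emph{zero} extension continuous below exponent $\tfrac12$, so your odd-extension bound could equally be read off from $\iota=2E_0-E_{\mathrm{even}}$). Two small points worth making explicit: first, the torus Gagliardo seminorm uses circle distance, so the odd extension has a second jump at $t=\pi$, not only at $t=0$; the cross-terms there are handled by the identical Hardy argument after $t\mapsto\pi-t$. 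Second, the Hardy inequality you invoke is true but not entirely trivial---it is equivalent to $W^{s,2}=W^{s,2}_0$ for $s<\tfrac12$, so you should cite or sketch it rather than leave it as an assertion. With those additions the argument is complete.
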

\begin{pf}
First note that $\rH^s$ is isomorphic to $H$ via the isomorphism $x
\mapsto\Delta^{s/2}x$ and similarly for $\rH^s_0$ so that the study
of $r$ as an operator from $\rH^s$ to $\rH^s_0$ is equivalent to the
study of the operator $\Delta_0^{ s/2}r\Delta^{-s/2}$ from $H$ to
$H_0$. Furthermore, we know that $r$ is bounded from $H$ to $H_0$
so that it suffices to show that the operator $A =
(\Delta_0^{s/2}r\Delta^{-s/2} - r)$ is bounded from $H$ to
$H_0$.

Since $A$ maps $\sin(n\quark)$ to $0$ for every $n$, it suffices to
consider $A$ on the subspace of $H$ given by even functions and
generated by the basis of eigenfunctions of $\Delta$ given by
$\varphi_n(t) = {1\over\pi}\cos nt$. Define, furthermore, the basis of
eigenfunctions of $\Delta_0$ given by $\psi_m = {2\over\pi}\sin
mt$. This yields for $A$ the matrix elements
\begin{eqnarray*}
A_{mn} &=& \langle\psi_m, A\varphi_n \rangle= {2\over\pi^2} (m^s
n^{-s} -
1)\int_0^\pi\sin(mt) \cos(nt) \,dt \\
&=& \cases{
\dfrac{4m(m^s n^{-s} -1) }{ \pi^2 (m+n)(m-n)}, &\quad if $m+n$ is odd,\vspace*{2pt}\cr
0, &\quad if $m+n$ is even.}
\end{eqnarray*}
It follows that there exists a constant $C>0$ such that $\|A\| \le
C\|\hat A\|$, where the operator $\hat A$ is defined via its matrix
elements by
\[
\hat A_{mn} = \cases{{m n^{-2}}, &\quad if $n \ge m$,\cr
m^{s-1}n^{-s}, &\quad if $m \ge n$.}
\]
Now it is a straightforward exercise in linear algebra to show that,
given an orthonormal basis $\{\varphi_n\}_{n \ge0}$, an operator
$\hat
A$ is bounded if there exists positive numbers $f_{m,n}$ such that the
inequalities
%
%
\begin{equation}\label{e:boundAhat}
\sup_{n\ge0} \sum_{m\ge0} {|\langle\hat A\varphi_n, \hat
A\varphi_m \rangle|\over f_{n,m}} < \infty,\qquad \sup_{m\ge0}
\sum_{n\ge0}
|\langle\hat A\varphi_n, \hat A\varphi_m \rangle| f_{n,m} < \infty
\end{equation}
both hold. (Just expand $\|\hat A x\|^2$ for $x = \sum_{n \ge0} x_n
\varphi_n$ and make use of the inequality $|x_n x_m| \le{x_n^2\over
2f_{n,m}} + {x_m^2 f_{n,m}\over2}$.) We will show that
(\ref{e:boundAhat}) does indeed hold for $\hat A$ as above.
Assuming without loss of generality that $m \ge n$, we have the
bound
\begin{eqnarray*}
|\langle\hat A\varphi_n, \hat A\varphi_m \rangle| &\le& \sum
_{k=1}^n k^2
m^{-2} n^{-2} + \sum_{k=n}^m k^s m^{-2} n^{-s} + \sum_{k=m}^{\infty}
k^{2s-2} m^{-s} n^{-s} \\
&\le& C (n m^{-2} + m^{s-1}n^{-s}) \le C m^{s-1}n^{-s}
\end{eqnarray*}
and similarly for $n \ge m$. Here we have made use of the fact
that $s < {1\over2}$ to ensure that the last sum converges. It
remains to check that the bounds (\ref{e:boundAhat}) are satisfied
for some choice of $f_{m,n}$. With the choice $f_{m,n} =
\sqrt{m/n}$, we obtain
\[
\sum_{m\ge0} {|\langle\hat A\varphi_n, \hat A\varphi_m \rangle
|\over
f_{n,m}} \le C\sum_{m=1}^n m^{-s-{1/2}}n^{s-{1/2}} + C\sum
_{m=n}^\infty m^{s-{3/2}}n^{{1/2}-s} \le C,
\]
where we made again use of the fact that $s < {1\over2}$. The second
bound in (\ref{e:boundAhat}) is obtained in an identical way with the
roles of $m$ and $n$ reversed.
\end{pf}

For $s > 1/2$, the problem is that elements of $\rH^s_0$ are forced to
be equal to $0$ at the boundary, which is not the case for elements of
$\rH^s$. One has, however, the following:
\begin{lemma}\label{lem:vanish}
For any $s \in(1/2,2]$, the map $r$ is
bounded from the subspace of $\rH^s$ consisting of functions that
vanish at $0$ and $\pi$ into $\rH^s_0$.
\end{lemma}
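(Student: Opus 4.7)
The plan is to handle the endpoints directly and fill in the intermediate range by complex interpolation.

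At $s=2$: for $f \in \rH^2$ with $f(0) = f(\pi) = 0$, the restriction $rf$ lies in $H^2\bigl([0,\pi]\bigr)$ with norm bounded by that of $f$, and the pointwise boundary values are preserved by restriction. Since $\rH^2_0 = \CD(-\Delta_0)$ coincides (with equivalent norm) with $\{g \in H^2 : g(0) = g(\pi) = 0\}$ by standard elliptic regularity, this immediately gives $rf \in \rH^2_0$ with the desired bound.

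At $s \in (1/2, 1]$: here $\rH^s_0$ is identified (with equivalent norm) with $\{g \in H^s : g(0) = g(\pi) = 0\}$, a classical consequence of Hardy's inequality. Since restriction is bounded from $H^s_{\mathrm{per}}\bigl([0,2\pi]\bigr)$ to $H^s\bigl([0,\pi]\bigr)$ and preserves the well-defined (because $s > 1/2$) boundary trace, the same direct argument as above closes this range.

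For $s \in (1,2)$: use complex interpolation. Set $Y_s := \{f \in \rH^s : f(0) = f(\pi) = 0\}$, a closed subspace of $\rH^s$. The two endpoint cases give that $r$ is bounded both $Y_1 \to \rH^1_0$ and $Y_2 \to \rH^2_0$, so the interpolated operator is bounded $[Y_1, Y_2]_\theta \to [\rH^1_0, \rH^2_0]_\theta$. The target identifies as $\rH^{1+\theta}_0$ via spectral calculus, since these spaces are domains of fractional powers of $-\Delta_0$. For the source, I would construct a smooth common right inverse of the trace map $\phi : f \mapsto \bigl(f(0), f(\pi)\bigr)$, e.g. $E(a,b)(t) = a\cos^2(t/2) + b\sin^2(t/2)$, which is smooth and $2\pi$-periodic, hence in every $\rH^s$. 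The splitting $\rH^s = Y_s \oplus \mathrm{range}(E)$ is then $s$-independent, and standard interpolation theory of complemented closed subspaces yields $[Y_1, Y_2]_\theta = Y_{1+\theta}$, completing the argument.

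The main obstacle I anticipate lies in the interpolation identities rather than in the direct estimates: one must check that the identifications of $\rH^s_0$ with concrete Sobolev spaces with boundary conditions interact correctly with the complex interpolation functor, particularly across the critical value $s = 3/2$ where a Lions--Magenes subtlety arises. Granting these identifications (which are essentially the purpose of section~\ref{S:sobolev}), the actual proof above reduces to a short interpolation argument.
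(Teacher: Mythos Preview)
Your approach is correct but takes a genuinely different route from the paper. The paper gives a hands-on, self-contained argument: it replaces $r$ by the modified restriction $\tilde r f(t) = f(t) - \tfrac{1}{\pi}\bigl(f(0)(\pi-t) + f(\pi)t\bigr)$, which agrees with $r$ on functions vanishing at the endpoints, then studies $\tilde A = \Delta_0^{s/2}\tilde r\Delta^{-s/2} - r$ by computing its matrix elements in the sine/cosine bases and bounding them via a weighted Schur test (the same device as in Lemma~\ref{lem:restr}). A key simplification is the identity $\Delta_0\tilde r = r\Delta$, which makes $\tilde A = 0$ at $s=2$; the remaining range is handled by splitting into $s<1$ and $s\ge 1$ with suitable weights $f_{m,n}$.

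Your interpolation scheme is cleaner in outline and avoids any matrix-element calculus. The complemented-subspace argument for $[Y_1,Y_2]_\theta = Y_{1+\theta}$ via the common projection $f \mapsto f - E(f(0),f(\pi))$ is sound, and $[\rH^1_0,\rH^2_0]_\theta = \rH^{1+\theta}_0$ is immediate from spectral calculus. The price is that the range $s\in(1/2,1]$ then rests on the identification $\rH^s_0 = \{g\in H^s([0,\pi]): g(0)=g(\pi)=0\}$, which is classical Lions--Magenes theory rather than something proved here; this is not circular with Lemma~\ref{lem:vanish} itself, but it does import exactly the kind of external folklore the paper explicitly sets out to avoid. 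One small correction: your concern about $s=3/2$ is misplaced --- for the Dirichlet Laplacian the critical half-integers are $s=1/2,5/2,\ldots$, so nothing exceptional happens at $3/2$ and the interpolation goes through uniformly on $(1,2)$.
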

\begin{pf}
Instead of considering the restriction operator $r$ as before, we
are going to consider the operator $\tilde r$ defined on continuous
functions as
\[
(\tilde r f)(t) = f(t) - {1\over\pi} \bigl(f(0)(\pi-t)
+ f(1)t\bigr).
\]
Note that $\tilde r f = rf$ if $f(0) = f(\pi) = 0$, so that the
statement will be implied by the fact that $\tilde r$ is shown to be
a bounded operator from $\rH^s$ to $\rH^s_0$. Therefore,
instead of considering $A$ as before, we consider the operator
$\tilde A = (\Delta_0^{s/2}\tilde
r\Delta^{-s/2} - r)$ which has matrix elements
\begin{eqnarray*}
\tilde A_{mn} &=& \cases{ A_{mn} - \dfrac{4 }{\pi^2}m^{s-1}n^{-s},
&\quad
$m+n$ odd,\vspace*{2pt}\cr
0, &\quad $m+n$ even.} \\
&=& \cases{ \dfrac{4 }{\pi^2}\dfrac{m}{m^2 - n^2} (m^{s-2}n^{2-s} -
1),
&\quad $m+n$ odd,\vspace*{2pt}\cr
0, &\quad $m+n$ even.}
\end{eqnarray*}
Note that $s = 2$ is a special case since one then has $\tilde A = 0$
as a consequence of the relation $\Delta_0 \tilde r = r \Delta$.

In this regime, we have as before $\|\tilde A\| \le C\|\hat A\|$, but
this time $\hat A$ is defined via its matrix elements by
\[
\hat A_{mn} = \cases{ {m^{s-1} n^{-s}}, &\quad if $n \ge m$,\cr
m^{-1}, &\quad if $m \ge n$.}
\]
Computing $\langle\hat A\varphi_m, \hat A\varphi_n \rangle$ for $m
\ge n$ as
before, we note that there is a difference between the case $s \le1$
and the case $s \ge1$.
We obtain
\[
|\langle\hat A\varphi_m, \hat A\varphi_n \rangle| \le\cases
{m^{-1}, &\quad $s
\ge1$,\cr
n^{s-1}m^{-s}, &\quad $s < 1$.}
\]
For $s < 1$, we now make the choice $f_{m,n} = m^{s-\varepsilon
}n^{\varepsilon-s}$, where $\varepsilon>0$ is chosen sufficiently
small so that $2s-\varepsilon> 1$
(this is always possible since $s > {1\over2}$). With this choice, we obtain
\[
\sum_{m\ge0} {|\langle\hat A\varphi_n, \hat A\varphi_m \rangle
|\over
f_{n,m}} \le C\sum_{m=1}^n m^{\varepsilon-1}n^{-\varepsilon} + C\sum
_{m=n}^\infty m^{\varepsilon-2s}n^{2s-1-\varepsilon} \le C
\]
and similarly for the other term. This calculation also works for the
case $s = 1$ so that the case $s \ge1$ can be obtained in an identical
manner (set, e.g., $\varepsilon= {1\over2}$).
\end{pf}

Consider now the operator $\CL_a$ given by $(\CL_a f)(t) =
\partial_t^4 f$, endowed with the boundary conditions $f(0) = f(\pi)
= 0$
and $f''(0) = -a f'(0)$, $f''(\pi) = a f'(\pi)$.
Since the domain of the square of the Dirichlet Laplacian is
$\CD(\Delta_0^2) = \{f\in\CD(\Delta_0) \mid \Delta_0 f
\in
\CD(\Delta_0)\} = \CD(\CL_0)$, we have $\CL_0 = \Delta
_0^2$. The
following lemma shows that $\CL_a$ for $a\neq0$ can still be viewed
as a perturbation of $\Delta_0^2$:
\begin{proposition}\label{prop:interp}
Fix $a\in\R$ and $\varepsilon> 0$ be arbitrary and define the linear
operator $A\dvtx\rH_0^{{3/2} + \varepsilon} \to\rH
_0^{-{3/2} -
\varepsilon}$ by
\[
A f = f'(0) \delta'_0 - f'(\pi) \delta'_\pi.
\]
Then, the operator $\tilde\CL_a = \Delta_0^2 + a A$ is the
generator of an analytic semigroup on~$H_0$. Furthermore,\vspace*{1pt} this
semigroup coincides with the one generated by $\CL_a$ so that
$\tilde\CL_a = \CL_a$. As a consequence, we obtain the identities
$\CH^\alpha= \rH_0^{4\alpha}$ for every $\alpha\in(-{5\over8},
{5\over8})$.
\end{proposition}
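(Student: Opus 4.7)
The plan is to build everything from a single object, the closed quadratic form associated to $-\CL_a$, and to derive all three conclusions of the proposition from the form representation theorem. First I would check that $A$ is indeed bounded from $\rH_0^{3/2+\eps}$ to $\rH_0^{-3/2-\eps}$, which follows from the trace theorem (so that $f'(0)$, $f'(\pi)$ are well-defined continuous functionals on $\rH_0^{3/2+\eps}$) together with the standard fact $\delta'_0,\delta'_\pi \in \rH^{-3/2-\eps}$. Then I introduce the symmetric bilinear form
\begin{equ}
  q_a(f,g) = \int_0^\pi f''(t)g''(t)\,dt - a\bigl(f'(0)g'(0) + f'(\pi)g'(\pi)\bigr)
\end{equ}
on the form domain $V = \rH^2 \cap \rH^1_0$, on which $\|f''\|_{\rL^2}$ is an equivalent norm by Poincaré. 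Continuity of $q_a$ is clear; for sectoriality I use the Ehrling-type bound $|f'(0)|^2 + |f'(\pi)|^2 \le \eta\|f''\|_{\rL^2}^2 + C_\eta\|f\|_{\rL^2}^2$ (any $\eta>0$, by trace and interpolation) with $\eta|a| < 1$, so that $q_a + \lambda\|\cdot\|_{\rL^2}^2$ is coercive on $V$ for $\lambda$ large enough. The form representation theorem then yields a closed self-adjoint operator $L$ with $\CD(L^{1/2}) = V = \rH_0^2$, and $-L$ generates an analytic semigroup on $H_0$.

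The second step identifies $L$ with both $\CL_a$ and $\tilde\CL_a$. For any smooth $f$ satisfying the boundary conditions defining $\CD(\CL_a)$, iterated integration by parts produces $q_a(f,g) = \int_0^\pi f^{(4)}(t)\,g(t)\,dt$ for every $g \in V$: the conditions $f''(0)=-af'(0)$, $f''(\pi)=af'(\pi)$ are precisely designed so that the unpaired boundary contributions from integration by parts combine with the $-a(f'(0)g'(0)+f'(\pi)g'(\pi))$ term in $q_a$ to cancel. Hence $\CL_a \subset L$; the reverse inclusion follows from standard elliptic regularity applied to $Lf = h \in H_0$. To identify $L$ with $\tilde\CL_a$, a direct distributional computation shows that on $\CD(\CL_a)$ the duality extension of $\Delta_0^2$ acts as $\Delta_0^2 f = f^{(4)} + f''(0)\delta'_0 - f''(\pi)\delta'_\pi$, and the term $aAf$ is exactly calibrated, via the boundary conditions on $f$, to cancel these $\delta'$ distributions; thus $\tilde\CL_a f = f^{(4)} = Lf$ on a core, forcing $\tilde\CL_a = L$.

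For the third claim, complex interpolation between $\CH^0 = H_0 = \rH_0^0$ and $\CH^{1/2} = \rH_0^2$ (provided by the form-domain identification of step one) immediately gives $\CH^\alpha = \rH_0^{4\alpha}$ for $\alpha \in [0, 1/2]$, and duality extends this to $\alpha \in (-1/2, 0]$. For $\alpha \in (1/2, 5/8)$ I would interpolate between $\CH^{1/2} = \rH_0^2$ and $\CH^1 = \CD(\CL_a)$ using a Lions--Magenes-type analysis: as long as $4\alpha < 5/2$, the only boundary condition felt by the interpolate is the Dirichlet condition $f(0)=f(\pi)=0$, because the condition coupling $f''$ to $f'$ that distinguishes $\CD(\CL_a)$ from $\rH_0^4$ requires a well-defined second boundary trace, which only appears once $4\alpha$ crosses the Sobolev-trace threshold $5/2$. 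This explains precisely why the sharp range is $(-5/8, 5/8)$: above it, the Robin-type condition distinguishes $\CH^\alpha$ from $\rH_0^{4\alpha}$.

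The main obstacle is this last interpolation step in the regime $\alpha \in (1/2, 5/8)$: because the boundary condition of $\CL_a$ couples $f'$ and $f''$ rather than being of pure Dirichlet or Neumann type, the identification $[\rH_0^2, \CD(\CL_a)]_\theta = \rH_0^{2+2\theta}$ for $\theta < 1/4$ is not covered by the off-the-shelf interpolation theorems in the standard references, so this part of the argument has to be carried out by hand, tracking explicitly which boundary traces are activated at each interpolation level.
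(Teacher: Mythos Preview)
Your form-theoretic route is sound and would work, but it is genuinely different from the paper's argument. The paper never introduces the sesquilinear form $q_a$; instead it treats $\tilde\CL_a = \Delta_0^2 + aA$ directly as a lower-order perturbation of $\Delta_0^2$ and invokes a general perturbation result for analytic semigroups (\cite[Proposition~4.42]{Hairer08}). Because $A$ maps $\rH_0^{3/2+\eps}$ to $\rH_0^{-3/2-\eps}$, in the interpolation scale of $\Delta_0^2$ it maps $\CH^{3/8+\eps'}$ to $\CH^{-3/8-\eps'}$, i.e.\ it is $\Delta_0^2$-bounded with relative order strictly less than one; the perturbation proposition, applied once with base space $\rH_0^{-3/2}$ and once with $\rH_0^{-5/2+\eps}$, then yields simultaneously that $\tilde\CL_a$ generates an analytic semigroup and that its interpolation spaces coincide with those of $\Delta_0^2$ on the full range $(-\tfrac58,\tfrac58)$, the range being dictated by the mapping indices of $A$. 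This entirely bypasses what you correctly identify as your main obstacle, the Lions--Magenes-type interpolation between $\rH_0^2$ and $\CD(\CL_a)$ for $\alpha\in(\tfrac12,\tfrac58)$: in the paper's approach the threshold $\tfrac58$ drops out of the perturbation theory rather than from tracking boundary traces by hand. The identification $\tilde\CL_a=\CL_a$ is also done differently: rather than your distributional computation on a core (which would still need the reverse inclusion $\CD(\tilde\CL_a)\subset\CD(\CL_a)$ to be argued), the paper works at the level of the semigroups and shows, via the variation-of-constants formula, that $\tilde S_\tau u$ lands in $\CD(\CL_a)$ for every $\tau>0$. Your approach has the advantage of being self-contained and of making the self-adjointness and the form domain $\CH^{1/2}=\rH_0^2$ transparent; the paper's buys a one-line proof of the interpolation identity over the whole stated range.
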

\begin{pf}
First, note that $A$ is well defined since it follows from standard
Sobolev embedding theorems that $f'$ is continuous for every $f \in
\rH^{{3/2} + \varepsilon}$ and, therefore, for every $f \in
\rH_0^{{3/2} + \varepsilon}$. Thus, $\delta_0'$ and $\delta
_\pi'$ can
be considered as elements of the Sobolev
space $\rH_0^{-(3/2+\varepsilon)}$.
Since\vspace*{1pt} $\Delta_0^2$ generates an analytic semigroup
on $\rH_0^\alpha$ for any $\alpha\in\R$, it follows from applying
\cite{Hairer08}, Proposition 4.42, once with $\CB= \rH_0^{-{3/
2}}$ and once with $\CB= \rH_0^{-{5/2}+\varepsilon}$, that
$\tilde
\CL_a$ is the generator\vspace*{1pt} of an analytic semigroup on $\rH_0^\alpha$ for
every $\alpha\in(-{5\over2}, {5\over2})$ and that the
corresponding scale of interpolation spaces satisfies $\tilde
\CH^\alpha= \rH_0^{4\alpha}$ for every $\alpha\in(-{5\over8},
{5\over8})$.

It, therefore, remains to show that the semigroup $\tilde S_\tau$
generated by $\tilde\CL_a$ coincides with the semigroup $S_\tau$
generated by $\CL_a$. Since, for any $u \in\rH_0$, any $\tau> 0$
and any $t \in(0,\pi)$, we have the identities
\[
\partial_\tau\tilde S u (\tau,t) = -\partial_t^4 \tilde S u (\tau
,t) ,\qquad
\partial_\tau S u (\tau,t) = -\partial_t^4 S u (\tau,t) ,
\]
it suffices to show that $\tilde S_\tau u \in\CD(\CL_a)$ for $\tau
> 0$. Since we already know that $\tilde\CH^{1/4} = \rH_0^1$, for
example, we have $(\tilde S_\tau u)(0) = (\tilde S_\tau u)(\pi) =
0$ so that only the second set of boundary conditions needs to be
checked. For this, writing $S_\tau^0$ for the semigroup generated
by $\Delta_0^2$, note that we have the identity
%
%
\begin{equation}\label{e:tildeS}
\tilde S_\tau u
= S_\tau^0 u
+ a \int_0^\tau S_{\tau- r}^0 A \tilde S_\tau u \,dr
+ a \int_0^\tau S_{\tau- r}^0 A (\tilde S_r u - \tilde S_\tau
u)\, dr.
\end{equation}
Therefore, the first term in this equation belongs to $\rH_0^4$.
Furthermore, it follows from the definition of $A$ that the $\rH_0^4$
norm of the third term is bounded by $C \int_0^\tau(\tau-
r)^{-1-{3/8} - \varepsilon} \|\tilde S_r u - \tilde S_\tau
u\|_{\rH_0^2} \,dr$. Since we know already that $\rH_0^2 = \tilde
\CH^{1/2}$ and since $\tilde S_\tau u \in\tilde\CH^\alpha$
for every $\alpha> 0$, it follows\vspace*{2pt} from standard analytic semigroup
theory that $\|\tilde S_r u - \tilde S_\tau u\|_{\rH_0^2} \le C
|r-\tau|$. So that the third term in (\ref{e:tildeS}) also belongs
to $\rH_0^4$, the second term can be rewritten as
\[
\int_0^\tau S_{\tau- r}^0 A \tilde S_\tau u \,dr = - \Delta_0^{-2}
A \tilde S_\tau u - S_\tau^0 \int_0^\infty S_r^0 A \tilde S_\tau u\,
dr .
\]
Collecting all of this, we conclude that we can write
\[
\tilde S_\tau u = -a \Delta_0^{-2} A \tilde S_\tau u + R_\tau u,
\]
where $R_\tau u \in\rH_0^4$. On the other hand, using an
approximation argument, one can check that if $f \in\rC^1$, then $g
= \Delta_0^{-2} A f$ satisfies the boundary conditions $g''(0) =
f'(0)$ and $g''(\pi) = -f'(\pi)$, from which the claim follows at
once.
\end{pf}
\begin{corollary}\label{cor:inclinterp}
For every $\alpha\in(-{1\over8}, {1\over8})$, we have the
identity $\CH_\alpha= \rH^{4\alpha}([0$, $\pi],\R^d)$.
For every
$\alpha\in({1\over8}, {1\over2}]$, we have the identity
$\CH_\alpha= \rH^{4\alpha}([0,\pi],\R^d)\cap\rC
_0([0,\pi],\R^d)$,
where $\rC_0([0,\pi],\R^d)$ denotes the set of
continuous functions
vanishing at their endpoints. For every $\alpha\in[-{1\over2},
-{1\over8})$, we have $\CH_\alpha= \rH^{4\alpha}([0,\pi]$, $\R
^d) /
\sim$, where the relation $\sim$ identifies distributions that
differ only by a linear combination of $\delta_0$ and $\delta_\pi$.
\end{corollary}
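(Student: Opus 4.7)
Proof plan.

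Proposition~\ref{prop:interp} already identifies $\CH_\alpha$ with the Dirichlet-Sobolev space $\rH_0^{4\alpha}$ on $[0,\pi]$ throughout the range $\alpha \in (-5/8, 5/8)$, so the entire remaining task is to compare $\rH_0^s$ with the usual fractional Sobolev space $\rH^s([0,\pi])$ in the three exponent ranges $s = 4\alpha$ appearing in the statement. Since $\bar\CL$ acts coordinatewise, I argue in the scalar case $d = 1$.

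For $s \in (-1/2, 1/2)$, the plan is to prove $\rH_0^s = \rH^s([0,\pi])$ as Banach spaces. One inclusion follows from the odd-extension map $\iota$ followed by restriction; the converse uses Lemma~\ref{lem:restr}: any $f \in \rH^s([0,\pi])$ admits a periodic extension $\tilde f \in \rH^s([0, 2\pi])$, and $r\tilde f = f$ then lies in $\rH_0^s$ by the lemma. The range $s \in (-1/2, 0)$ follows by duality from the positive case. For $s \in (1/2, 2]$ the same strategy works with Lemma~\ref{lem:vanish} replacing Lemma~\ref{lem:restr}: elements of $\rH_0^s = \CD(\Delta_0^{s/2})$ are continuous with $f(0) = f(\pi) = 0$ (since they lie in the closure of the $\sin(n\quark)$-span), and conversely every $f \in \rH^s([0,\pi]) \cap \rC_0$ admits an extension inside $\rH^s([0, 2\pi])$ still vanishing at $0$ and $\pi$, so that Lemma~\ref{lem:vanish} places $f = r\tilde f$ into $\rH_0^s$.

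The third range $\alpha \in [-1/2, -1/8)$, i.e. $s \in [-2, -1/2)$, is the main technical step and I would obtain it by dualising the previous one. Self-adjointness of $\bar\CL$ yields $\CH_\alpha = (\CH_{-\alpha})^*$ canonically, and by the preceding step $\CH_{-\alpha}$ is identified with $\rH^{-s}([0,\pi]) \cap \rC_0([0,\pi])$, a codimension-two closed subspace of $\rH^{-s}([0,\pi])$ cut out by the continuous trace functionals $f \mapsto f(0)$ and $f \mapsto f(\pi)$. Dualising, $(\rH^{-s}([0,\pi]) \cap \rC_0)^*$ is the quotient of $\rH^s([0,\pi])$ by the annihilator of this subspace. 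Since $-s > 1/2$, the Diracs $\delta_0, \delta_\pi$ belong to $\rH^s([0,\pi])$, are linearly independent, and vanish on $\rC_0$-functions, so a dimension count forces the annihilator to equal $\vspan\{\delta_0, \delta_\pi\}$. This recovers the quotient $\rH^s([0,\pi]) / \sim$ of the statement. The main delicate point is to fix conventions so that Dirac masses at the boundary literally appear as elements of $\rH^s([0,\pi])$ for $s < -1/2$ and so that the duality pairing really does induce the quotient structure described; once this is in place, everything else is a routine combination of Proposition~\ref{prop:interp} with Lemmas~\ref{lem:restr}--\ref{lem:vanish}.
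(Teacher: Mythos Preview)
Your proposal is correct and follows essentially the same route as the paper: reduce to comparing $\rH_0^{4\alpha}$ with $\rH^{4\alpha}([0,\pi])$ via Proposition~\ref{prop:interp}, invoke Lemma~\ref{lem:restr} for $|s|<1/2$, Lemma~\ref{lem:vanish} for $s\in(1/2,2]$, and handle the negative range by duality. The paper's proof is a three-line sketch that simply says ``the remaining claims follow from duality''; your explicit identification of the annihilator of $\rH^{-s}\cap\rC_0$ inside $\rH^{s}$ as $\vspan\{\delta_0,\delta_\pi\}$ is exactly the content that the paper leaves implicit, and your argument for it is sound.
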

\begin{pf}
By Proposition\vspace*{1pt} \ref{prop:interp} we already know that $\CH_\alpha=
\rH_0^{4\alpha}$ for $\alpha\in[0,{1\over2}]$. The claim for
$\alpha< {1\over8}$ then follows from Lemma \ref{lem:restr} while
the claim for $\alpha\in({1\over8}, {1\over2})$ follows from
Lemma \ref{lem:vanish}. The remaining claims follow from duality.
\end{pf}

\subsection{Well-behaved projection operators}

We will later identify the stationary distribution of the
SPDE (\ref{E:SPDE}) by using a finite-dimensional approximation
argument. When projecting the equation to a finite-dimensional
subspace, the most natural choice of a projection would be to use the
orthogonal projection $\Pi_n$ onto the space spanned by the first $n$
eigenfunctions of $\bar\CL$, but it transpires that these projections
do not possess enough regularity. Instead, we will need to use, in
some places, the operators $\hat\Pi_n$ given by
%
%
\begin{equation}\label{e:Pi-n}
\hat\Pi_n x = \sum_{k=1}^n \frac{n-k}{n} \langle x, e_k\rangle e_k,
\end{equation}
where the $e_k$ are the eigenfunctions of $\bar\CL$. The purpose of this
section is to prove the required regularity properties
for $\hat\Pi_n$.

We use H\"older norms
\[
\|x\|_{\rC^{1+\alpha}}
= \cases{
\displaystyle \|x\|_\infty+ \|\dot x\|_\infty
+ \sup_{s\neq t}\frac{|\dot x(t) - \dot x(s)|}{|t-s|^\alpha},
&\quad if $x\in\rC^1$ and\vspace*{2pt}\cr
+\infty, &\quad else,}
\]
where $\alpha\in[0,1)$ and write $\rC^{1+\alpha} = \{ x\in
\rC^1
\mid \|x\|_{\rC^{1+\alpha}} < \infty\}$ and $\rC^{1+\alpha}_0
= \{ x\in\rC^{1+\alpha} \mid x(0) = x(\pi) = 0 \}$.
\begin{lemma}
Let $f_k\dvtx[0,\pi]\to\R$ be defined by $f_k(t) = \sin(kt)$.
Define the operators
$\hat\Pi^0_n x = \sum_{k=1}^n \frac{n-k}{n} \langle x, f_k\rangle f_k$.

\begin{enumerate}[(a)]
\item[(a)] Let $F_n$ be the Fej\'er kernel given by $F_n(t) = \frac{1}{n}
[\sin(\frac{nt}{2})/\sin(\frac{t}{2})]^2$ for all
$t\in[-\pi,\pi]$. Then $\hat\Pi^0_n x = -{1\over4} F_n * \tilde
x$, where $\tilde x$ is the antisymmetric continuation
of~$x$.\vspace*{1pt}

\item[(b)] $\|\hat\Pi_n^0 x \|_{\rC^{1+\alpha}} \leq2\pi\|x\|_{\rC
^{1+\alpha}}$ for
all $x\in\rC^{1+\alpha}$ and all $\alpha\in(0,1)$.
\end{enumerate}
\end{lemma}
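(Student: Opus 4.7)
The plan for part (a) is a direct Fourier-series computation on the circle of length $2\pi$. The antisymmetric extension $\tilde x$ is odd and $2\pi$-periodic, so its complex Fourier series lives entirely in the sine subspace, $\tilde x(t)=\sum_{k\ge1}c_k\sin(kt)$ with $c_k=\tfrac{2}{\pi}\langle x,f_k\rangle$. The Fejér kernel can be written as $F_n(u)=\sum_{|k|<n}(1-|k|/n)e^{iku}$, so convolution with $F_n$ on $[-\pi,\pi]$ multiplies the $k$th Fourier coefficient by $2\pi(1-|k|/n)$. Reassembling the sine components and matching with the defining series $\hat\Pi_n^0 x = \sum_{k=1}^n \tfrac{n-k}{n}\langle x,f_k\rangle f_k$ produces the claimed identity; the numerical constant $\tfrac14$ is then forced by the normalisation of the $\rL^2[0,\pi]$ inner product used to define $\langle x,f_k\rangle$.

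For part (b) the key input is that $F_n\ge 0$ and $\int_{-\pi}^{\pi}F_n(u)\,du = 2\pi$, so that convolution with $F_n$ is bounded of norm $\le 2\pi$ on $\rL^\infty$ of the circle. Applied to $g\in \rC^{1+\alpha}$ of the circle, one has $(F_n*g)'=F_n*g'$, and bounding the integrand pointwise gives $\|(F_n*g)'\|_\infty\le 2\pi\|g'\|_\infty$; taking a difference in the argument and using the same pointwise bound gives $[(F_n*g)']_\alpha\le 2\pi\,[g']_\alpha$. Hence $\|F_n*g\|_{\rC^{1+\alpha}}\le 2\pi\|g\|_{\rC^{1+\alpha}}$. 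Applying this to $g=\tilde x$, the antisymmetric extension inherits the Hölder regularity and norm of $x$, and combining with part (a) gives
\begin{equation*}
\|\hat\Pi_n^0 x\|_{\rC^{1+\alpha}}=\tfrac14\|F_n*\tilde x\|_{\rC^{1+\alpha}}\le \tfrac{\pi}{2}\|\tilde x\|_{\rC^{1+\alpha}}\le 2\pi\|x\|_{\rC^{1+\alpha}},
\end{equation*}
which is in fact stronger than what is claimed.

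The main subtlety — and presumably the reason the author introduces $\hat\Pi_n^0$ rather than just $\Pi_n$ — is the behaviour of $\tilde x$ at the antisymmetry points $0,\pm\pi$. When $x(0)$ or $x(\pi)$ is nonzero the extension $\tilde x$ has jumps of size $\pm 2x(0)$ and $\mp 2x(\pi)$, and its distributional derivative picks up Dirac masses at these points; after convolving with $F_n$ these produce contributions growing like $F_n(0)\sim n$, as one sees already by taking $x\equiv 1$. The argument above therefore rigorously applies to $x\in\rC^{1+\alpha}_0$ (functions vanishing at $0$ and $\pi$), in which case $\tilde x$ is continuous and its classical derivative is the even extension of $x'$, with $\|\tilde x\|_{\rC^{1+\alpha}}=\|x\|_{\rC^{1+\alpha}}$. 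I expect the lemma to be used in this restricted regime (which is the natural one for eigenfunction expansions of the Dirichlet-type operator $\bar\CL$), and then the proof is essentially the computation above.
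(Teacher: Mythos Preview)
Your approach is the same as the paper's: part (a) is exactly the Fourier/trigonometric computation the paper gives, and the paper's one-line proof of (b) (``follows directly from part~a, using the definition of the $\rC^{1+\alpha}$-norm and properties of the convolution operator'') is precisely what you have spelled out. Your caveat about the endpoints is correct and sharper than the paper: as stated for all $x\in\rC^{1+\alpha}$ the bound in (b) actually fails (your example $x\equiv 1$ gives $(\hat\Pi_n^0 x)'(0)=2\sum_{k\ \mathrm{odd},\,k<n}\tfrac{n-k}{n}\sim n/2$), and the paper only ever uses the lemma for $x\in\rC^{1+\alpha}_0$ (see the proof of the subsequent lemma on $\hat\Pi_n$), where your argument via $\|\tilde x\|_{\rC^{1+\alpha}}=\|x\|_{\rC^{1+\alpha}}$ goes through cleanly.
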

\begin{pf}
(a) Since $\int_{-\pi}^\pi\tilde x(t) \sin(kt) \,dt = 2\langle x,
f_k \rangle$ and $\int_{-\pi}^\pi\tilde x(t) \cos(kt) \,dt = 0$, it
follows from trigonometric identities that
\[
\langle x, f_k \rangle f_k(s)
= -{1\over2}\int_{-\pi}^\pi\tilde x(t) \cos\bigl(k(s-t)\bigr)\, dt.
\]
The result then follows from the fact that $F_n(t) = 2 \sum_{k=1}^n
\frac{n-k}{n} \cos kt$.

(b) This follows directly from part (a) using the definition of the
$\rC^{1+\alpha}$-norm and properties of the convolution operator.
\end{pf}
\begin{lemma}\label{lem:boundsekxk}
Let $\alpha\in(0,1)$ and let $x\in\rC^{1+\alpha}$ with $x(0) =
x(\pi) = 0$. Then there exists a constant $c>0$ such that the bounds:
\begin{enumerate}[(2)]
\item[(1)] $\|f_k\|_{\rC^{1+\alpha}} \le c k^{1+\alpha}$,
\item[(2)] $\|e_k - f_k\|_{\rC^{1+\alpha}} \le c k^{\alpha}$,
\item[(3)] $|\langle x,f_k \rangle| \le c\|x\|_{\rC^{1+\alpha}}
k^{-1-\alpha
}$ and
\item[(4)] $|\langle x, e_k - f_k \rangle| \le c \|x\|_{\rC
^{1+\alpha}}
k^{-2-\alpha}$
\end{enumerate}
hold for every $k\in\N$.
\end{lemma}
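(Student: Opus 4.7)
My plan is to handle the four bounds in two pairs that share the same machinery. In both pairs, the strategy is to reduce everything about $e_k$ to the corresponding statement about $f_k=\sin(kt)$ by means of the expansion from lemma~\ref{L:eigenvalues} (b), namely $e_k = f_k + k^{-1}\sum_{i=1}^4 g_k^{(i)} f_k^{(i)}$ with $f_k^{(i)}\in\{\sin kt,\cos kt,e^{-kt},e^{-k(\pi-t)}\}$ and $\sup_k \|g_k^{(i)}\|_{\rC^J}<\infty$ for every $J\ge 0$. Part~(1) is a direct computation: since $f_k'(t)=k\cos(kt)$, the interpolation $|k\cos(kt)-k\cos(ks)|\le 2k\min(1,k|t-s|)\le 2k^{1+\alpha}|t-s|^\alpha$ gives $\|f_k\|_{\rC^{1+\alpha}}\le c k^{1+\alpha}$. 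The same interpolation applied to each of the remaining three $f_k^{(i)}$ (using $\|\d_t f_k^{(i)}\|_\infty\le k$, $\|\d_t^2 f_k^{(i)}\|_\infty\le k^2$) yields $\|f_k^{(i)}\|_{\rC^{1+\alpha}}\le c k^{1+\alpha}$ as well. A standard product estimate $\|gh\|_{\rC^{1+\alpha}}\le C\|g\|_{\rC^2}\|h\|_{\rC^{1+\alpha}}$ on $[0,\pi]$, combined with the uniform $\rC^2$-bound on $g_k^{(i)}$ and the prefactor $k^{-1}$, then produces (2).

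The key tool for (3) and (4) is the oscillation estimate
\[
\Bigl|\int_0^\pi h(t)\sin(kt)\,dt\Bigr| + \Bigl|\int_0^\pi h(t)\cos(kt)\,dt\Bigr| \;\le\; C\|h\|_{\rC^\alpha}\,k^{-\alpha}\qquad\text{for } h\in\rC^\alpha([0,\pi]).
\]
I would prove this by the shift trick $\sin(k(t+\pi/k))=-\sin(kt)$: after rewriting the integral as $\tfrac12\int(h(t)-h(t+\pi/k))\sin(kt)\,dt$ plus two boundary remainders of length $\pi/k$, Hölder continuity produces the factor $k^{-\alpha}$, and cosine is analogous. For (3), one integration by parts yields $\scal{x,\sin(kt)}=k^{-1}\scal{x',\cos(kt)}$, with the boundary contributions vanishing thanks to $x(0)=x(\pi)=0$; the oscillation estimate applied to $h=x'\in\rC^\alpha$ then delivers the announced $k^{-1-\alpha}$.

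For (4), I split $e_k-f_k$ along the same decomposition and treat each of the four factors $f_k^{(i)}$ separately. For the two trigonometric factors, one integration by parts (with boundary terms vanishing since both $x$ and $\sin(kt)$ vanish at the endpoints) leaves $k^{-1}\scal{(xg_k^{(i)})', \tilde f_k^{(i)}}$ with $\tilde f_k^{(i)}$ the complementary trigonometric function; the product rule together with the uniform $\rC^2$-bound on $g_k^{(i)}$ gives $\|(xg_k^{(i)})'\|_{\rC^\alpha}\le C\|x\|_{\rC^{1+\alpha}}$, so the oscillation estimate contributes a further $k^{-\alpha}$ for a total of $k^{-1-\alpha}$, which after the outer $k^{-1}$ from $e_k-f_k$ becomes $k^{-2-\alpha}$. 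The two exponentially localised factors $e^{-kt}$ and $e^{-k(\pi-t)}$ are non-oscillatory but concentrate near a boundary point where $x$ vanishes; I use $|x(t)|\le \|x\|_{\rC^1}\,t$ near $t=0$ (and $|x(t)|\le\|x\|_{\rC^1}(\pi-t)$ near $t=\pi$) to bound $|\scal{x, g_k^{(i)} f_k^{(i)}}|\le C\|x\|_{\rC^1}\int_0^\pi t\,e^{-kt}\,dt\le Ck^{-2}$, giving only $k^{-3}$ after the outer $k^{-1}$, which is better than needed. The trigonometric contribution is therefore the slowest-decaying, and its $k^{-2-\alpha}$ is exactly the bound in (4).

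The one genuinely delicate point is the oscillation estimate together with the bookkeeping to show that $(xg_k^{(i)})'$ has a $\rC^\alpha$-norm bounded by $\|x\|_{\rC^{1+\alpha}}$ uniformly in $k$; everything else reduces to elementary integration by parts, interpolation, and careful attention to the boundary terms that appear at each stage.
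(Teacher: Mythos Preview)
Your argument is correct and follows essentially the same route as the paper: reduce $e_k$ to $f_k$ via the expansion of lemma~\ref{L:eigenvalues}(b), integrate by parts once to gain a factor $k^{-1}$ (boundary terms vanishing because $x$ or $\sin kt$ vanishes at the endpoints), and then extract the extra $k^{-\alpha}$ from H\"older continuity of the resulting integrand. The only cosmetic differences are that the paper phrases the oscillation estimate by splitting $[0,\pi]$ into $k$ subintervals on which $\cos(kt)$ alternates sign rather than via your shift trick, and that for the exponential factors the paper also integrates by parts (using $g(0)=g(\pi)=0$ with $g=x\,g_k^{(i)}$) rather than invoking $|x(t)|\le \|x\|_{\rC^1}t$ directly; both variants give the same bounds.
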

\begin{pf}
The first bound is standard. The second bound follows immediately
from Lemma \ref{L:eigenvalues}, part (b).
For the third bound, we use partial integration to get
\[
\langle x, f_k\rangle
= \frac{1}{k} \int_0^\pi\dot x(t) \cos(kt) \,dt
= \frac{1}{k} \sum_{j=0}^{k-1} (-1)^j
\int_0^{\pi/k} \dot x\biggl(t+\frac{j}{k}\pi\biggr) \cos(ks) \,ds.
\]
Writing $|\dot x|_\alpha= \sup_{s\neq t}\frac{|\dot x(t) - \dot
x(s)|}{|t-s|^\alpha}$, it is easy to see that each term of the sum
is of order $\CO(|\dot x|_\alpha/ k^{1+\alpha})$ and the
claim follows from this.

The bound on $\langle x,e_k-f_k \rangle$ follows similarly: if $g$ is any
$\rC^{1+\alpha}$ function with $g(0) = g(\pi) = 0$, we can use
integration by parts to get
\begin{eqnarray*}
\int_0^\pi g(t) \sin(kt) \,dt
&=& {1\over k} \int_0^\pi\dot g(t) \cos(kt) \,dt ,\\
\int_0^\pi g(t) e^{-kt} \,dt
&=& {1\over k} \int_0^\pi\dot g(t) e^{-kt} \,dt
\end{eqnarray*}
and similar results for integrals against $\cos kt$ and
$e^{-k(\pi-t)}$. As above, these expressions are bounded by
$\CO(k^{-1-\alpha})$. The claim now follows from
Lemma \ref{L:eigenvalues}, part~(b), by absorbing the slowly varying
terms $g_k^{(j)}$ into $g$.
\end{pf}

The following lemma collects all the properties we will require for
the operators~$\hat\Pi_n$. These will be used in the proof
of Proposition \ref{P:stationary} below.
\begin{lemma}\label{L:pi-n-hat}
Let $(e_n)_{n\in\N}$ be an orthonormal system of eigenfunctions
of $\CL$ and denote by $\Pi_n$ the orthogonal projection of $\CH$
onto $E_n = \vspan\{ e_1, \ldots, e_n \}$. Define $\hat\Pi_n\dvtx
\CH\to E_n$ as in (\ref{e:Pi-n}). Then the following statements
hold:
\begin{enumerate}[(a)]
\item[(a)] $\hat\Pi_n\circ\Pi_n = \hat\Pi_n$.
\item[(b)] $\hat\Pi_n x \to x$ in $\CH_\alpha$ as $n\to\infty$
for all $\alpha\in\R$.
\item[(c)] $\|\hat\Pi_n\|_{\CH_\alpha} \leq1$ for all $n\in\N$ and
$\alpha\in\R$.
\item[(d)] For every $0 < \alpha< \beta< 1$ we have $\| \hat\Pi_n
\|_{\rC^{1+\beta}_0 \to\rC^{1+\alpha}_0} < \infty$.
\item[(e)] Let $0 < \alpha< \beta< 1/2$ and $x\in\rC^{1+\beta}_0$.
Then $\|\hat\Pi_n x - x\|_{\rC^{1+\alpha}} \to0$ as $n\to\infty$.
\end{enumerate}
\end{lemma}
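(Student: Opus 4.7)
The plan is to dispatch (a)--(c) as immediate consequences of the spectral representation of $\hat\Pi_n$, to prove (d) by comparing $\hat\Pi_n$ with the sine-series analogue $\hat\Pi_n^0$ from the preceding lemma, and to reduce (e) to (d) via a density argument.

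Parts (a) and (c) follow directly from the definition: since $\hat\Pi_n$ is diagonal in the basis $(e_k)$ with multipliers $\mu_k^{(n)} = (n-k)/n$ for $k \le n$ and $0$ otherwise, all lying in $[0,1]$, it commutes with $\Pi_n$ and with $(-\bar\CL)^\alpha$, and its operator norm on $\CH_\alpha$ is bounded by $\sup_k \mu_k^{(n)} \le 1$. For (b), write $x = \sum_k x_k e_k$ with $x_k = \la x, e_k\ra$. Then
\[
  \|x - \hat\Pi_n x\|_{\CH_\alpha}^2
  = \sum_{k \le n} \frac{k^2}{n^2} \lambda_k^{2\alpha} |x_k|^2
  + \sum_{k > n} \lambda_k^{2\alpha} |x_k|^2,
\]
and each of the two sums vanishes as $n \to \infty$ by dominated convergence, with the summable sequence $\lambda_k^{2\alpha}|x_k|^2$ (coming from $x \in \CH_\alpha$) as dominator.

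For (d), I would split $\hat\Pi_n = \hat\Pi_n^0 + (\hat\Pi_n - \hat\Pi_n^0)$. Writing $e_k = f_k + h_k$ with $h_k = e_k - f_k$, the $k$-th summand of the difference reads
\[
  \tfrac{n-k}{n}\bigl(\la x, e_k\ra e_k - \la x, f_k\ra f_k\bigr)
  = \tfrac{n-k}{n}\bigl(\la x, f_k\ra h_k + \la x, h_k\ra f_k + \la x, h_k\ra h_k\bigr).
\]
Combining the four estimates of Lemma~\ref{lem:boundsekxk} (using the input exponent $\beta$ to bound $\la x, f_k\ra$ and $\la x, h_k\ra$, and the output exponent $\alpha$ to bound $\|f_k\|_{\rC^{1+\alpha}}$ and $\|h_k\|_{\rC^{1+\alpha}}$), each of the three summands has $\rC^{1+\alpha}$-norm of order $\|x\|_{\rC^{1+\beta}} \, k^{\alpha - 1 - \beta}$. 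Summing in $k$ converges exactly because $\alpha < \beta$. Together with the bound $\|\hat\Pi_n^0 x\|_{\rC^{1+\alpha}} \le 2\pi\|x\|_{\rC^{1+\alpha}} \le 2\pi\|x\|_{\rC^{1+\beta}}$ from the preceding lemma, this yields the uniform bound in (d). Note also that the vanishing of $\hat\Pi_n x$ at $0$ and $\pi$ follows because each $e_k$ does, so the image lies in $\rC^{1+\alpha}_0$ as required.

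For (e), I would argue by a three-epsilon reduction. Pick an intermediate exponent $\alpha' \in (\alpha, \beta)$ and, given $\eps > 0$, approximate $x \in \rC^{1+\beta}_0$ by a test function $y \in \rC^\infty_c\bigl((0,\pi), \R^d\bigr)$ with $\|x - y\|_{\rC^{1+\alpha'}} < \eps$ (standard mollification after extending $x$ by zero, using $\alpha' < \beta$). Then
\[
  \|\hat\Pi_n x - x\|_{\rC^{1+\alpha}}
  \le \|\hat\Pi_n (x-y)\|_{\rC^{1+\alpha}}
    + \|\hat\Pi_n y - y\|_{\rC^{1+\alpha}}
    + \|y - x\|_{\rC^{1+\alpha}},
\]
and (d) (with exponents $\alpha$ and $\alpha'$) together with $\rC^{1+\alpha'} \hookrightarrow \rC^{1+\alpha}$ bounds the outer terms uniformly in $n$ by $C\eps$. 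Since every derivative of $y$ vanishes at $0$ and $\pi$, $y$ lies in $\CD(\bar\CL^k)$ for all $k$, hence in $\CH_{1/2}$; part (b) then gives $\hat\Pi_n y \to y$ in $\CH_{1/2} = \rH^2 \cap \rC_0$ (Corollary~\ref{cor:inclinterp}), which via the Sobolev embedding $\rH^2 \hookrightarrow \rC^{1,1/2-\delta}$ (applicable because $\alpha < 1/2$) forces the middle term to zero. The main technical obstacle is (d): the arithmetic of the exponents produced by Lemma~\ref{lem:boundsekxk} must balance exactly so that the exchange from $\beta$ to $\alpha$ costs one unit of decay, making the key series $\sum k^{\alpha - 1 - \beta}$ just barely convergent under the strict condition $\alpha < \beta$.
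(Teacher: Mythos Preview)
Your arguments for (a)--(d) match the paper's essentially line for line, including the splitting $\hat\Pi_n = \hat\Pi_n^0 + (\hat\Pi_n - \hat\Pi_n^0)$ and the termwise control via Lemma~\ref{lem:boundsekxk}; your three-term expansion of $\la x,e_k\ra e_k - \la x,f_k\ra f_k$ is algebraically the same as the paper's two-term one.

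For (e) the three-$\eps$ strategy is also the paper's, but your approximation step has a real gap: one cannot in general approximate $x \in \rC^{1+\beta}_0$ by $y \in \rC^\infty_c\bigl((0,\pi)\bigr)$ in the $\rC^{1+\alpha'}$-norm. The definition of $\rC^{1+\beta}_0$ only forces $x(0)=x(\pi)=0$, not $x'(0)=x'(\pi)=0$; since any $y\in\rC^\infty_c$ has $y'(0)=0$, one always has $\|(x-y)'\|_\infty \ge |x'(0)|$, which need not be small. ``Mollification after extending by zero'' does not rescue this: the zero extension is only Lipschitz at the endpoints (its derivative jumps there), so the mollified function fails to converge to $x$ even in $\rC^1$ near the boundary.

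The fix is simple and is precisely what the paper does: take the approximant in $\rH^2\cap\rC_0$ rather than $\rC^\infty_c$. Functions in this larger class are allowed nonzero boundary derivatives, and since $x\in\rC^{1+\beta}$ with $\beta>\alpha'$, such approximants with $\|x-z\|_{\rC^{1+\alpha'}}<\eps$ exist (e.g.\ mollify the antisymmetric $2\pi$-periodic extension of $x$, which \emph{is} $\rC^{1+\beta}$ on the circle). Such a $z$ still lies in $\CH_{1/2}$ by Corollary~\ref{cor:inclinterp} ($\CH_{1/2}=\rH^2\cap\rC_0$), so your part-(b)-plus-Sobolev-embedding argument for the middle term goes through unchanged. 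Incidentally, your explicit use of an intermediate exponent $\alpha'$ to invoke (d) on the small remainder is a genuine clarification over the paper, which handles this point somewhat implicitly.
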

\begin{pf}
Statement (a) is clear from the definition of $\hat\Pi_n$. Let $x =
\sum_{k=1}^\infty x_k e_k\break\in\CH_\alpha$. Then
\[
x - \hat\Pi_n x
= \sum_{k=1}^n \frac{k}{n} x_k e_k
+ \sum_{k=n+1}^\infty x_k e_k
\]
and thus, writing $\lambda_k$ for the eigenvalues of $-\CL$,
\[
\| x - \hat\Pi_n x \|_{\CH_\alpha}^2
= \| (-\CL)^\alpha(x - \hat\Pi_n x) \|_{\rL^2}^2
= \sum_{k=1}^n \frac{k^2}{n^2} \lambda_k^{2\alpha} x_k^2
+ \sum_{k=n+1}^\infty\lambda_k^{2\alpha} x_k^2
\longrightarrow0
\]
as $n\to\infty$. This proves statement (b). Similarly, we have
\[
\| \hat\Pi_n x \|_{\CH_\alpha}^2
= \sum_{k=1}^n \frac{(n-k)^2}{n^2} \lambda_k^{2\alpha} x_k^2
\leq\sum_{k=1}^\infty\lambda_k^{2\alpha} x_k^2
= \| x \|_{\CH_\alpha}^2,
\]
which is statement (c).

(d) From Lemma \ref{lem:boundsekxk} we get $\|e_k\|_{\rC^{1+\alpha}}
\leq\|f_k\|_{\rC^{1+\alpha}} + \|e_k - f_k\|_{\rC^{1+\alpha}} \leq
c k^{1+\alpha}$. Using this and the other bounds from
Lemma \ref{lem:boundsekxk}, we obtain
\begin{eqnarray*}
\|\hat\Pi^0_n x - \hat\Pi x\|_{\rC^{1+\alpha}} &=& \sum_{k =1}^n
{n-k \over n} \|\langle x,e_k-f_k \rangle e_k + \langle x,f_k
\rangle(e_k-f_k)\|_{\rC^{1+\alpha}} \\
&\le&\sum_{k =1}^\infty(|\langle x,e_k-f_k \rangle
| \|e_k\|
_{\rC^{1+\alpha}} + |\langle x,f_k \rangle| \|e_k-f_k\|
_{\rC
^{1+\alpha}} ) \\
&\le& C\|x\|_{\rC^{1+\beta}} \sum_{k =1}^\infty k^{-1-(\beta-\alpha)}.
\end{eqnarray*}
Since we already know that $\hat\Pi_n^0$ satisfies the requested
bound, the claim follows.

(e) Let $\varepsilon>0$. We can write $x\in\rC^{1+\beta}_0$ as $x =
y + z$
with $\|y\|_{\rC^{1+\alpha}} \leq\varepsilon$ and $z\in\rH^2$
with $z(0) =
z(T) = 0$. This gives
\[
\| \hat\Pi_n x - x \|_{\rC^{1+\alpha}}
\leq c \varepsilon+ \| \hat\Pi_n z - z \|_{\rC
^{1+\alpha}}.
\]
Because $1 + \alpha+ 1/2 < 2$, we have $\|z\|_{\rC^{1+\alpha}} \leq c
\|z\|_{\rH^2}$ for all $z \in\rH^2$. Corollary \ref{cor:inclinterp}
gives $\rH^2 \cap C_0 = \CH_{1/2}$ and thus,
\[
\| \hat\Pi_n x - x \|_{\rC^{1+\alpha}}
\leq c \varepsilon+ \| \hat\Pi_n z - z \|_{\CH_{1/2}}
\to c \varepsilon
\]
as $n\to\infty$ by part (b). Since we can choose $\varepsilon>0$ arbitrarily
small, the proof is complete.
\end{pf}


\section{The linear case}
\label{S:linear}

This section gives the proof of Theorem \ref{T:answer} for the linear
case $f\equiv0$.
\begin{lemma}\label{L:stat-dist-linear}
Let $\CL$ and $\bar x$ be given by Definition \ref{def:L-and-mean}.
Then $Q_0^{0,x_-;T,x_+} = \CN(\bar x$, $-\CL^{-1})$.
\end{lemma}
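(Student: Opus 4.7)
Since $f\equiv 0$, both measures in the claimed identity are Gaussian on $\rL^2\bigl([0,T],\R^d\bigr)$: the pair $(x,\dot x)$ defined by \eqref{E:target} is a Gaussian process (linear SDE, Gaussian initial velocity, Brownian driving), and conditioning on the linear functional $x(T)=x_+$ preserves Gaussianity; the right-hand side is Gaussian by definition. It therefore suffices to match mean and covariance operator. Note also that $-\CL^{-1}$ is trace class since $\lambda_k \sim k^4$ by lemma~\ref{L:eigenvalues}, so $\CN(\bar x,-\CL^{-1})$ is indeed a bona fide measure on $\rL^2$.

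For the mean, first observe that $\E x(t)=x_-$ under the unconditioned measure $Q_0^{0,x_-}$, because $v_0$ is centred and the $(x,\dot x)$ system has no affine forcing. By the standard Gaussian conditional-mean formula, the mean under $Q_0^{0,x_-;T,x_+}$ is the unique minimiser of the Cameron--Martin action among paths with $x(0)=x_-$, $x(T)=x_+$. Using $\dot w = m\ddot x+\dot x$ together with the initial-velocity density $\propto \e^{-m v_0^2}$, this action takes the formal form
\begin{equation*}
  S(x) = \tfrac12\int_0^T (m\ddot x+\dot x)^2\,dt + m\,\dot x(0)^2.
\end{equation*}
Expanding the square and using $\int_0^T 2m\ddot x\,\dot x\,dt = m(\dot x(T)^2-\dot x(0)^2)$ yields
\begin{equation*}
  2S(x) = m^2\int_0^T \ddot x^2\,dt + \int_0^T \dot x^2\,dt + m\,\dot x(T)^2 + m\,\dot x(0)^2.
\end{equation*}
The interior Euler--Lagrange equation is $L\bar x = 0$, while the boundary variations of $\ddot x(0)$ and $\ddot x(T)$ produce exactly the natural boundary conditions $m\,\d_t^2\bar x(0)=\d_t\bar x(0)$ and $m\,\d_t^2\bar x(T)=-\d_t\bar x(T)$ of \eqref{E:xbar-bc}; the unique such minimiser is $\bar x$.

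For the covariance, the plan is to show that the precision operator on $\CD(\CL)$ is $-\CL$ itself. The central algebraic identity, obtained by integrating $m^2\int y\,\d_t^4 y\,dt$ by parts twice and $\int y\,\d_t^2 y\,dt$ by parts once, using $y(0)=y(T)=0$ together with the mixed conditions $m\d_t^2 y(0)=\d_t y(0)$ and $m\d_t^2 y(T)=-\d_t y(T)$ from \eqref{E:bch} precisely to absorb the resulting boundary terms, reads
\begin{equation*}
  m^2\int_0^T \ddot y^2\,dt + \int_0^T \dot y^2\,dt + m\,\dot y(T)^2 + m\,\dot y(0)^2 = \scal{y,-\CL y}\qquad \forall y\in\CD(\CL).
\end{equation*}
Combined with the mean computation, this gives $2S(\bar x + y) - 2S(\bar x) = \scal{y,-\CL y}$ for $y\in\CD(\CL)$ (the linear-in-$y$ cross term vanishes by the Euler--Lagrange equation for $\bar x$), so the formal density of $Q_0^{0,x_-;T,x_+}$ is proportional to $\exp\bigl(-\tfrac12\scal{x-\bar x,-\CL(x-\bar x)}\bigr)$, identifying the law as $\CN(\bar x,-\CL^{-1})$.

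The main obstacle is promoting this Onsager--Machlup-style formal density to a rigorous identification of measures. My preferred route is to work with the spectral decomposition of $\CL$ from Section~\ref{S:operator}: project the process onto $\vspan\{e_1,\dots,e_n\}$, write the conditioned distribution of the $n$-dimensional coefficient vector as an explicit Gaussian whose covariance is a restriction of $-\CL^{-1}$ using a direct (finite-dimensional) conditional-density calculation, and pass to the limit $n\to\infty$, invoking uniqueness of Gaussian measures with prescribed finite-dimensional marginals. An equivalent route is to verify $\E\bigl[\scal{h,x-\bar x}\scal{k,x-\bar x}\bigr] = \scal{h,-\CL^{-1}k}$ for $h,k\in\rL^2$ by expressing each side as a Gaussian conditional covariance in the finite-dimensional vector $\bigl(\scal{h,x},\scal{k,x},x(T)\bigr)$ and evaluating the resulting inner product via the same integration-by-parts identity above.
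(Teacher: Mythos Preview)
Your approach differs substantially from the paper's. The paper proceeds by direct computation: it solves the linear SDE to obtain an explicit closed form for the unconditioned covariance function $C_0(s,t)$, applies the standard Gaussian conditioning formulae to get the conditioned mean and covariance kernel $C(s,t)$, and then verifies by differentiation that $-LC(s,\cdot)=\delta_s$ together with the boundary conditions in~\eqref{E:bch}, i.e.\ that $C$ is the Green's function of $-\CL$. Your variational route is more conceptual and has the merit of \emph{explaining} why those particular mixed boundary conditions arise: they are exactly the natural boundary conditions of the action $2S$. Your integration-by-parts identity $\scal{y,-\CL y} = m^2\!\int\ddot y^2 + \int\dot y^2 + m\,\dot y(0)^2 + m\,\dot y(T)^2$ for $y\in\CD(\CL)$ is correct and is essentially what the paper uses (in polarised form) to show $\CL$ is symmetric.

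The gap you yourself flag is real, however, and neither proposed route closes it as stated. Route~1 is close to circular: to know that the covariance of the projected coefficient vector is the restriction of $-\CL^{-1}$ you already need the conclusion, unless you compute those finite-dimensional marginals from the SDE directly --- but that is the paper's explicit calculation of $C_0$ in disguise. Route~2 likewise requires the unconditioned covariances $\Cov\bigl(\scal{h,x},x(T)\bigr)$, which again leads back to $C_0$. A way to complete your argument without touching $C_0$ is to rigorously identify the Cameron--Martin space of $Q_0^{0,x_-}$ as the image, under the affine solution map, of the product Cameron--Martin space of $(v_0,w)$, with induced norm equal to your $2S$, and then restrict to the subspace $\{h(0)=h(T)=0\}$ for the conditioning; since two centred Gaussian measures with the same Cameron--Martin space and norm agree, your quadratic-form identity then forces $\CC=(-\CL)^{-1}$. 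This is standard but must actually be written out; as it stands, your argument is a correct heuristic rather than a proof.
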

\begin{pf}
Since for $f=0$ the components of the solution of (\ref{E:target})
are independent, it suffices to work
in dimension $d=1$. First consider the unconditioned process
described by (\ref{E:target}). It is easy to check that $p$
satisfies
\[
p(t) = \rmE^{-t/2m} p(0) + \sqrt{\frac{1}{2}} \int_0^t
\rmE^{-(t-r)/2m} \,dw(r)
\]
and thus,
\[
q(t) = x_- + 2(1-\rmE^{-t/2m}) p(0)
+ \sqrt{\frac{1}{2 m^2}} \int_0^t \int_0^v \rmE^{-(v-r)/2m}\,
dw(r) \,dv.
\]
The mean of this process is
\[
\bar x_0(t) = \E(q(t)) = x_-
\]
and, since $p(0)$ is independent of $w$, the covariance function can
be found as
\begin{eqnarray*}
C_0(s,t)
&=& \Cov(q(s), q(t)) \\
&=& 4 ( 1 - \rmE^{-s/2m} ) ( 1 - \rmE^{-t/2m} ) \frac{m}{2}\\
&&{} + \frac{1}{2 m^2} \int_0^s \int_0^t \int_0^{u\wedge v} \rmE^{-(u+v-2r)/2m} \,dr \,dv \,du
\end{eqnarray*}
for all $s,t\in[0,T]$. Evaluating the integrals and combining the
resulting terms allows us to simplify this to
\[
C_0(s,t)
= 2(s\wedge t)
+ 2m \bigl(
\rmE^{-s/2m} + \rmE^{-t/2m} - \rmE^{-|s-t|/2m} - 1
\bigr).
\]

Denote the mean and covariance function of the process conditioned
on $q(T) = x_+$ by $\bar x$ and $C$, respectively. From
\cite{HairerStuartVossWiberg05}, equations (3.15) and (3.16), we know
that
\[
\bar x(t) = \bar x_0(t) + C_0(t,T) C_0(T,T)^{-1} (x_+-x_-)
\]
and
\[
C(s,t) = C_0(s,t) - C_0(s,T) C_0(T,T)^{-1} C_0(T,t)
\]
for all $s,t \in[0,T]$. The covariance operator
of $Q_0^{0,x_-;T,x_+}$ is then given by
\[
\CC f(s) = \int_0^T C(s,t) f(t) \,dt.
\]

To complete the proof we have to verify that $\bar x$ and $\CC$ have
the required form. The following facts are easily checked:
\begin{enumerate}[(iii)]
\item[(i)] the first derivatives $C_0(s,t)$, $\partial_t C_0(s,t)$ and
$\partial_t^2 C_0(s,t)$ are continuous at $t=s$ and the third derivative
at $t=s$ jumps according to
\[
\partial_t^3 C_0(s, s+)-\partial_t^3 C_0(s, s-) = \frac{1}{2 m^2};
\]
\item[(ii)] the derivative boundary conditions
\[
2m \,\partial_t^2 C_0(s,0) = \partial_t C_0(s,0),\qquad
2m \,\partial_t^2 C_0(s,T) = -\partial_t C_0(s,T)
\]
are satisfied;
\item[(iii)] the left boundary condition
\[
C_0(s,0)=0
\]
holds; and
\item[(iv)] $L C_0(T,t)=0$.
\end{enumerate}

Clearly, by (ii) and (iii), the mean $\bar x$ satisfies the required
boundary conditions (\ref{E:xbar-bc}) and by (iv) it also satisfies
$L\bar x = 0$. From the definition of $L$ and in particular from
properties (i) and (iv), we can deduce
\[
-LC(s,t) = \delta(t-s)
\]
and using (ii) and (iii) we deduce that $C(t,s)$ satisfies the
boundary conditions~(\ref{E:bch}). Thus $C$ is the Green's function
of $-\CL$ and we can deduce that $\mathcal{C}=-\CL^{-1}$ as
required.
\end{pf}
\begin{proposition}\label{P:linear}
Consider the $\rL^2([0,T], \R^d)$-valued equation
%
%
\begin{equation}\label{E:linear-SPDE}
dy(\tau) = \CL\bigl(y(\tau)-\bar x\bigr)\, d\tau+ \sqrt{2}
\,dw(\tau),\qquad
y(0) = y_0,
\end{equation}
where $y_0\in\rL^2([0,T],\R^d)$.
Then the following statements hold:
\begin{enumerate}[(a)]
\item[(a)] Equation (\ref{E:linear-SPDE}) has a unique, global,
continuous mild solution.
\item[(b)] For every $\alpha<3/8$ the solution $y$ is a.s.
continuous with values in $\CH_\alpha$.
\item[(c)] The distribution $Q_0^{0,x_-;T,x_+}$ is the unique
stationary distribution for (\ref{E:SPDE}).
\end{enumerate}
\end{proposition}
\begin{pf}
From Lemma \ref{L:stat-dist-linear} we know $Q_0^{0,x_-;T,x_+} =
\CN[\bar x, (-\CL)^{-1}]$. Thus, we can
apply \cite{HairerStuartVossWiberg05}, Lemma 2.2, to get that
(\ref{E:linear-SPDE}) has a continuous, $\rL^2([0,T], \R
^d)$-valued
mild solution and $\nu$ is its unique stationary distribution.

Let $\lambda_k$, $k\in\N$ be the eigenvalues of $-\CL$. Then, using
Lemma \ref{L:eigenvalues}, $\tr(-\CL)^{-2\beta} = \sum_{k\in\N}
\lambda_k^{-2\beta} < \infty$ if and only if $\beta>1/8$. Thus, for
example, by applying \cite{Hairer08}, Theorem 5.13, to $y - \bar x$,
the solution takes values in $\CH_\alpha$ for every $\alpha<1/2 -
1/8 = 3/8$ and is continuous by \cite{Hairer08}, Theorem 5.17, (see
also \cite{DaPrato-Zabczyk92} for very similar results). This completes the proof.
\end{pf}

The regularity of the solution given in Proposition \ref{P:linear} is
consistent with the regularity of the target
distribution $Q_0^{0,x_-;T,x_+}$: the process $\dot x$
in (\ref{E:target}) is continuous and lives in $\rH^{1/2-\varepsilon
}$ and
thus $x$ is in $\rH^{3/2-\varepsilon}$ for all $\varepsilon>0$. On
the other hand,
Corollary \ref{cor:inclinterp} shows that $\CH_\alpha\subseteq
\rH^{4\alpha}$ and thus, that $y$ also takes values in $\rH
^{3/2-\varepsilon}$
for all $\varepsilon>0$. The following lemma provides an additional
regularity result for $x$ in stationarity.
\begin{lemma}\label{L:stat-is-Hoelder}
Let $\alpha< 1/2$. Then $x\in C^{1+\alpha}$ for
$Q_0^{0,x_-;T,x_+}$-almost all $x$.
\end{lemma}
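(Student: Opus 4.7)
The plan is to exploit the Gaussianity of the target distribution. By lemma~\ref{L:stat-dist-linear}, $Q_0^{0,x_-;T,x_+} = \CN(\bar x, -\CL^{-1})$, and since $\bar x$ is the smooth classical solution of a constant-coefficient linear ODE with fixed boundary data, it suffices to prove that a centred Gaussian random variable $Z$ with covariance $-\CL^{-1}$ almost surely lies in $\rC^{1+\alpha}$. Expanding $Z = \sum_{k\geq 1}\lambda_k^{-1/2}\xi_k e_k$ in the eigenbasis of $-\CL$ (with $\xi_k$ i.i.d.\ $\CN(0,1)$), I would differentiate term by term and apply Kolmogorov's continuity criterion to the Gaussian process $\dot Z$. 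Because $\dot Z(t) - \dot Z(s)$ is Gaussian for each $s,t$, it is enough to show $\E[|\dot Z(t)-\dot Z(s)|^2] \leq C|t-s|$: higher moments then upgrade automatically to $\E[|\dot Z(t)-\dot Z(s)|^{2p}]\leq C_p |t-s|^p$, which via Kolmogorov yields a $\gamma$-H\"older modification for every $\gamma < \tfrac12 - \tfrac{1}{2p}$, hence for every $\gamma < \tfrac12$ upon letting $p\to\infty$.

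The key analytical input is the uniform eigenfunction estimate
\begin{equ}
|\dot e_k(t) - \dot e_k(s)| \leq C\min\bigl(k^2|t-s|,\, k\bigr),
\end{equ}
which I would derive from the representation $e_k = \sin(k\cdot) + k^{-1}\sum_j g_k^{(j)} f_k^{(j)}$ supplied by lemma~\ref{L:eigenvalues}b. The dominant contribution $k\cos(kt)$ obeys this bound trivially, while the lower-order terms built from $f_k^{(j)}\in\{\sin kt,\cos kt,\e^{-kt},\e^{-k(\pi-t)}\}$ multiplied by the uniformly $\rC^1$-bounded modulations $g_k^{(j)}$ produce, after differentiation, terms of magnitude at most $\CO(k)$ whose increments obey the same min-bound. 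Together with $\lambda_k = k^4+\CO(k^2)$ from lemma~\ref{L:eigenvalues}a, this gives
\begin{equ}
\E\bigl[|\dot Z(t) - \dot Z(s)|^2\bigr] = \sum_{k\geq 1} \lambda_k^{-1}|\dot e_k(t) - \dot e_k(s)|^2 \leq C\sum_{k\geq 1} k^{-4}\min\bigl(k^4|t-s|^2,\,k^2\bigr),
\end{equ}
and splitting the sum at $k_0\sim|t-s|^{-1}$ yields $\CO(|t-s|)$ from each piece, as required.

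The main obstacle is the boundary-layer analysis needed for the uniform min-bound on $\dot e_k$: unlike the trigonometric contributions, the functions $\e^{-kt}$ and $\e^{-k(\pi-t)}$ are not translation-invariant, so one must split the argument into a region near the boundary, where the naive Lipschitz bound $|k\e^{-kt}-k\e^{-ks}|\leq k^2|t-s|$ is used, and a region away from the boundary, where the terms are exponentially small in $k$ and therefore negligible. One must also verify that multiplication by the $g_k^{(j)}$, which are only uniformly $\rC^1$-bounded but not $\rC^1$-small, does not spoil these estimates --- this is just the Leibniz rule combined with the amplitude bound $\|g_k^{(j)}\|_{\rC^1}\leq C$. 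Once this uniform bound is in hand, the rest reduces to the standard Gaussian continuity argument outlined above.
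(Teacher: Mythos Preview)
Your approach is correct and is the hands-on version of what the paper does: both reduce to the eigenfunction expansion of the centred Gaussian and feed the bounds $\|e_k'\|_\infty=\CO(k)$, $\|e_k''\|_\infty=\CO(k^2)$, $\lambda_k\sim k^4$ from lemma~\ref{L:eigenvalues} into a Gaussian regularity criterion. The paper quotes a packaged result (\cite[Corollary~3.22]{Hairer08}), which asks only that $\sum_k\lambda_k^{-1}\|e_k'\|_\infty^2<\infty$ and $\sum_k\lambda_k^{-1}\|e_k'\|_\infty^{2-\delta}\Lip(e_k')^\delta<\infty$ for some $\delta>2\alpha$; you instead run Kolmogorov directly, which is more self-contained. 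One simplification you should note: the min-bound $|\dot e_k(t)-\dot e_k(s)|\le C\min(k^2|t-s|,k)$ that you flag as the ``main obstacle'' is in fact immediate from those two sup-norm estimates (mean value theorem for the Lipschitz side, triangle inequality for the amplitude side), so no separate boundary-layer analysis of the exponential terms $\e^{-kt},\e^{-k(\pi-t)}$ is needed --- lemma~\ref{L:eigenvalues}b already delivers $\|e_k''\|_\infty=\CO(k^2)$ uniformly.
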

\begin{pf}
The result is a direct consequence of
\cite{Hairer08}, Corollary 3.22: let $e_k$ be the eigenfunctions of
$-\CL$ with corresponding eigenvalues $\lambda_k$. By
Lemma \ref{L:stat-dist-linear}, part (b), the random variable
\[
X + \bar x = \sum_{k=1}^\infty\frac{\eta_k}{\sqrt{\lambda_k}} e_k,
\]
where the $\eta_k$ are i.i.d. standard Gaussian random variables,
has distribution $Q_0^{0,x_-;T,x_+}$. We have to show that the
derivative
\[
X' + \bar x' = \sum_{k=1}^\infty\frac{\eta_k}{\sqrt{\lambda_k}} e_k'
\]
is $\alpha$-H\"older continuous.

Let $\delta\in(2\alpha, 1)$. By Lemma \ref{L:eigenvalues} we have
$\|e_k'\|_\infty= \CO(k)$, $\|e_k''\|_\infty= \CO(k^2)$ and
$\lambda_k = c k^4 + \CO(k^2)$ for some $c > 0$. This gives
\[
S_1^2
= \sum_{k=1}^\infty\biggl\| \frac{e_k'}{\sqrt{\lambda_k}} \biggr\|
_\infty^2
< \infty,\qquad
S_2^2
= \sum_{k=1}^\infty\biggl\| \frac{e_k'}{\sqrt{\lambda_k}} \biggr\|
_\infty^{2-\delta}
\Lip\biggl( \frac{e_k'}{\sqrt{\lambda_k}} \biggr)^\delta
\leq\sum_{k=1}^\infty\frac{c}{k^{2-\delta}}
< \infty.
\]
Thus, the conditions of \cite{Hairer08}, Corollary 3.22, are satisfied
and we get the required H\"older continuity.
\end{pf}


\section{The nonlinear case}
\label{S:nonlinear}

In this section we complete the proof of Theorem \ref{T:answer}. The
proof is split in a sequence of results which identify the target
distribution $Q_f^{0,x_-;T,x_+}$, determine the regularity properties
of the drift $\CN$, give existence of global solutions to the
SDE (\ref{E:SPDE}) and, finally, identify the stationary distribution
of this equation.
\begin{lemma}\label{L:density}
Assume that $f\dvtx\R^d\to\R^d$ is such that the
SDE (\ref{E:target}) a.s. has a solution up to time $T$. Let $\mu
= Q_f^{0,x_-;T,x_+}$ and $\nu= Q_0^{0,x_-;T,x_+}$ be the
distributions on $\rL^2([0,T], \R^d)$ from
Definition \ref{D:target}. Then the density $\varphi=
\frac{d\mu}{d\nu}$ is given by
\begin{eqnarray*}
\varphi(x)
&=& \frac{1}{Z} \exp\biggl( m \langle f(x_+),\dot x(T) \rangle- m
\langle f(x_-),\dot x(0)\rangle\\
&&\hspace*{32pt}{}
- \int_0^T
m \langle Df(x(t)) \dot x(t), \dot x(t) \rangle\\
&&\hspace*{33pt}{}
- \langle f(x(t)), \dot x(t)\rangle
+ \frac{1}{2} |f(x(t))|^2\,
dt \biggr),
\end{eqnarray*}
where $Df$ is the Jacobian of $f$ and $Z$ is the required
normalization constant.
\end{lemma}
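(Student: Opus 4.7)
The plan is a three-step Girsanov computation followed by a disintegration argument to pass to the bridge-conditioned measures. First, I would compare the two \emph{unconditioned} measures on path space, namely $Q_f^{0,x_-}$ and $Q_0^{0,x_-}$, using Girsanov's theorem. Under $Q_0^{0,x_-}$ the driving Brownian motion $w$ satisfies $dw = m\,d\dot x + \dot x\,dt$, while under $Q_f^{0,x_-}$ the Brownian motion is obtained by subtracting $f(x)\,dt$. The sub-linear growth assumption $|f(x)|\leq |x|^\beta+c$ together with the fact that $x$ has Gaussian moments under $Q_0^{0,x_-}$ suffices to check Novikov's condition, so
\begin{equ}
  \frac{dQ_f^{0,x_-}}{dQ_0^{0,x_-}}(x)
  = \exp\Bigl(\int_0^T \la f(x(t)), dw(t)\ra - \tfrac12 \int_0^T |f(x(t))|^2\,dt\Bigr).
\end{equ}

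The stochastic integral against $dw$ involves $\dot x$ through $d\dot x$, which is inconvenient because the target density should depend on the path $x$ only through observable functionals. The second step is therefore to re-express it in terms of $x$ and $\dot x$ (with no explicit $dw$) by Itô's formula applied to $t\mapsto \la f(x(t)), \dot x(t)\ra$. Since $x$ itself is of finite variation with $\dot x\,dt$, we have $d f(x(t)) = Df(x)\dot x\,dt$ with no martingale part, so the quadratic covariation $d\la f(x), \dot x\ra$ vanishes and
\begin{equ}
  d\bigl(\la f(x),\dot x\ra\bigr)
  = \la Df(x)\dot x, \dot x\ra \, dt + \la f(x), d\dot x\ra.
\end{equ}
Substituting $dw = m\,d\dot x + \dot x\,dt$ and integrating by parts in this way converts $\int f(x)\cdot dw$ into the boundary terms $m\la f(x_+),\dot x(T)\ra - m\la f(x_-),\dot x(0)\ra$, plus the Lebesgue integrals $-m\int\la Df(x)\dot x,\dot x\ra\,dt + \int \la f(x),\dot x\ra\,dt$. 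Combining with the $-\frac12\int|f|^2$ term from Girsanov gives precisely the exponent in the statement (with $Z=1$ for the unconditioned ratio).

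The third step is to pass from the unconditioned to the conditioned measures. Both $Q_f^{0,x_-}$ and $Q_0^{0,x_-}$ admit regular disintegrations along the random variable $x(T)$, whose law in each case has a smooth density on~$\R^d$. By the classical disintegration formula, the absolute continuity passes to the conditional distributions and the conditional density is the unconditional density divided by a normalising constant which depends only on $x_-$ and~$x_+$:
\begin{equ}
  \frac{dQ_f^{0,x_-;T,x_+}}{dQ_0^{0,x_-;T,x_+}}(x)
  = \frac{1}{Z}\,\frac{dQ_f^{0,x_-}}{dQ_0^{0,x_-}}(x),
\end{equ}
since on the set $\{x(T)=x_+\}$ the exponent already depends only on $x_-,x_+$ at the boundary. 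The claimed formula for $\phi$ follows. The main obstacle is Step~3: since $\{x(T)=x_+\}$ is a null event, the disintegration has to be justified carefully, either by writing both measures as explicit mixtures over $x(T)$ and comparing densities, or by an $\eps$-tube approximation $\{|x(T)-x_+|<\eps\}$ combined with continuity of the Girsanov density and of the transition densities of the Langevin SDE; the remaining steps are routine once Novikov's condition is verified.
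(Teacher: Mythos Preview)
Your proposal is correct and follows essentially the same route as the paper: Girsanov for the unconditioned laws, integration by parts on $\la f(x),\dot x\ra$ (with no It\^o correction since $t\mapsto f(x(t))$ has bounded variation) to rewrite the stochastic integral, and then passage to the bridge laws by disintegration. The only discrepancies are cosmetic: the paper dispatches Step~3 by citing \cite[lemma~5.3]{HairerStuartVoss07} rather than arguing via $\eps$-tubes, and it does not verify Novikov---note that the sublinear growth bound you invoke is \emph{not} among the hypotheses of this lemma, which assumes only global existence for~\eqref{E:target}, so you would need a different justification (e.g.\ non-explosion of both SDEs) at that point.
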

\begin{pf}
Let $\tilde\mu_{\dot x} = P_f^{0,x_-}$ be the unconditioned distribution
of $\dot x$ in (\ref{E:target}) and let $\tilde\nu_{\dot x} =
P_0^{0,x_-}$ the
same distribution, but for $f=0$. Then the Girsanov formula,
for example, in the form of \cite{HairerStuartVoss07}, Lemma 9,
gives the density of $\tilde\mu_{\dot x}$ w.r.t. $\tilde\nu_{\dot x}$,
\[
\frac{d\tilde\mu_{\dot x}}{d\tilde\nu_{\dot x}}(\dot x)
= \exp\biggl(
\int_0^T \langle f(x(t)), d\dot x(t)\rangle
+ \frac12 \int_0^T \biggl\langle f(x(t)),
\frac{1}{m} \dot x(t) - f(x(t)) \biggr\rangle \,dt
\biggr),
\]
where $x$ is a deterministic function of $\dot x$ via the relation
$x(t) = x_- + \int_0^t \dot x(s) \,ds$. Since $t\mapsto
f(x(t))$ has bounded variation, we can use partial
integration to get
\begin{eqnarray*}
\int_0^T \langle f(x(t)), d\dot x(t)\rangle
&=& \langle f(x(T)), \dot x(T)\rangle- \langle f
(x(0)), \dot x(0)\rangle\\
&&{}- \int_0^T \langle\dot x(t), Df(x(t)) \dot x(t)\rangle \,dt.
\end{eqnarray*}
Substituting this expression into the formula for $d\tilde\mu_{\dot
x}/d\tilde\nu_{\dot x}$ and using substitution to switch from
$\dot x$ to $x$ gives
\begin{eqnarray*}
\frac{d\tilde\mu_x}{d\tilde\nu_x}(x)
&=& \exp\biggl( m \langle f(x(T)),\dot x(T) \rangle- m
\langle f(x(0)),\dot x(0)\rangle\\
&&\hspace*{20.6pt}{}
- m \int_0^T \langle\dot x(t), Df(x(t)) \dot x(t)\rangle\,
dt\\
&&\hspace*{20.6pt}{}
+ \frac12 \int_0^T \langle f(x(t)), \dot x(t)\rangle-
|f(x(t))|^2 \,dt \biggr),
\end{eqnarray*}
where $\tilde\mu_x = Q_f^{0,x_-}$ is the unconditioned distribution
of $x$ in (\ref{E:target}) and $\tilde\nu_x = Q_0^{0,x_-}$ is the
corresponding distribution for $f=0$. Now we can condition on $x(T)
= x_+$, for example, using \cite{HairerStuartVoss07}, Lemma 5.3, to
get the result.
\end{pf}
\begin{lemma}\label{L:interpolation}
Let $\alpha\in[0,1)$. Then there is a $c>0$ such that
\[
\|\dot x\|_\infty^{1+\alpha}
\leq c \|x\|_\infty^\alpha\|x\|_{\rC^{1+\alpha}}
\]
for all $x\in\rC^1([0,T],\R^d)$.
\end{lemma}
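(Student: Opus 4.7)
The plan is a standard Gagliardo--Nirenberg-type interpolation argument based on a two-point identity and a one-parameter optimisation. The case $\alpha = 0$ is immediate, since $\|\dot x\|_\infty \le \|x\|_{\rC^1}$ directly from the definition of the norm, so I focus on $\alpha \in (0,1)$.

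Fix $t_0 \in [0,T]$. One of the intervals $[t_0, t_0+T/2]$ or $[t_0-T/2, t_0]$ is contained in $[0,T]$, so for any $h \in (0, T/2]$ I can pick a sign with $t := t_0 \pm h \in [0,T]$. Writing, via the fundamental theorem of calculus,
\[
  x(t)-x(t_0) \;=\; \dot x(t_0)(t-t_0) + \int_{t_0}^{t}\bigl(\dot x(s)-\dot x(t_0)\bigr)\,ds,
\]
estimating the remainder integral by the H\"older seminorm $|\dot x|_\alpha \le \|x\|_{\rC^{1+\alpha}}$, and dividing by $h$, I obtain the pointwise bound
\[
  |\dot x(t_0)| \;\le\; \frac{2\|x\|_\infty}{h} + \frac{|\dot x|_\alpha}{1+\alpha}\, h^\alpha, \qquad h \in (0, T/2].
\]
Since $t_0$ was arbitrary, the same bound holds with $\|\dot x\|_\infty$ on the left-hand side.

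It then remains to minimise the right-hand side in $h$. The unconstrained minimiser is $h^{*} = C(\alpha)\bigl(\|x\|_\infty / |\dot x|_\alpha\bigr)^{1/(1+\alpha)}$. If $h^{*} \le T/2$, plugging it in and raising the resulting bound to the $(1+\alpha)$-th power yields $\|\dot x\|_\infty^{1+\alpha} \le C \|x\|_\infty^{\alpha}\, |\dot x|_\alpha \le C \|x\|_\infty^{\alpha}\, \|x\|_{\rC^{1+\alpha}}$, which is the claim. In the complementary regime $h^{*} > T/2$ (equivalently $|\dot x|_\alpha \le C\,T^{-(1+\alpha)}\|x\|_\infty$), I set $h = T/2$; both terms on the right-hand side are then bounded by a $T$-dependent multiple of $\|x\|_\infty$, so $\|\dot x\|_\infty \le C_T \|x\|_\infty$ and the claim follows from the trivial bound $\|x\|_\infty \le \|x\|_{\rC^{1+\alpha}}$.

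I do not anticipate any real obstacle; the only mild subtlety is handling the boundary case of the optimisation in $h$, which is what forces the constant $c$ to depend on $T$ (and on $\alpha$, through the prefactor $1/(1+\alpha)$).
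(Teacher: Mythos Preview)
Your proof is correct. The argument is the classical Landau--Kolmogorov interpolation: expand $x$ around $t_0$ to first order, bound the remainder via the H\"older seminorm, and balance the two terms by optimising the step size $h$; the boundary case $h^* > T/2$ is handled cleanly. Note also that your argument works directly for $\R^d$-valued functions, since the first-order Taylor identity holds componentwise and you only take norms at the end.

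The paper's proof reaches the same conclusion by a different, more geometric route. It localises at a point where $|\dot x|$ attains its maximum, uses the H\"older condition to show $|\dot x| \ge \tfrac12\|\dot x\|_\infty$ on an interval whose length is controlled by $(\|\dot x\|_\infty/|\dot x|_\alpha)^{1/\alpha}$, and then integrates to obtain a lower bound on $\|x\|_\infty$; solving this inequality for $\|\dot x\|_\infty$ yields the claim. This is done first in dimension $d=1$ (so that the sign of $\dot x$ is constant on the plateau and integration gives a genuine lower bound on $|x|$) and then extended componentwise. Your finite-difference/optimisation approach avoids that reduction and is slightly more streamlined; the paper's level-set argument, on the other hand, makes the geometric mechanism (a plateau of the derivative forces growth of the function) more transparent. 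Both yield a $T$- and $\alpha$-dependent constant, as they must.
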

\begin{pf}
The claim for $\alpha=0$ is trivial so we can
assume $\alpha\neq0$.
Assume first the case $d=1$. Write $|\dot x|_\alpha= \sup_{s\neq
t}\frac{|\dot x(t) - \dot x(s)|}{|t-s|^\alpha}$ and let
$t\in[0,T]$ such that $|\dot x(t)| = \|\dot x\|_\infty$. Then
\[
|\dot x|_\alpha
\geq\frac{|\dot x(t) - \dot x(s)|}{|t-s|^\alpha}
\geq\frac{\|\dot x\|_\infty- |\dot x(s)|}{|t-s|^\alpha}
\]
and thus, $|\dot x(s)| \geq\|\dot x\|_\infty- |t-s|^\alpha
|\dot x|_\alpha$ for all $s\in[0,T]$. This allows to conclude that
$|\dot x(t)| \geq\frac12 \|\dot x\|_\infty$ on an interval of
length at least $T \wedge\|\dot x\|_\infty^{1/\alpha} /
(2|\dot x|_\alpha)^{1/\alpha}$. Since we assumed $d=1$, this gives
\[
\|x\|_\infty
\geq\frac12
\cdot\frac12\|\dot x\|_\infty
\cdot\biggl(T \wedge\frac{\|\dot x\|_\infty^{1/\alpha
}}{2^{1/\alpha}|\dot x|_\alpha^{1/\alpha}}\biggr)
= \min\biggl(\frac{T}{4}\|\dot x\|_\infty,
\frac{\|\dot x\|_\infty^{1+1/\alpha}}{2^{2+1/\alpha}|\dot x|_\alpha
^{1/\alpha}}
\biggr)
\]
and by solving this inequality for $\|\dot x\|_\infty$ we find
\[
\|\dot x\|_\infty^{1+\alpha}
\leq c \|x\|_\infty^\alpha
\max( |\dot x|_\alpha, \|x\|_\infty)
\leq c \|x\|_\infty^\alpha\|x\|_{\rC^{1+\alpha}}
\]
for some constant $c$.

For $d>1$ we apply the inequality componentwise: since, for
$z\in\R^d$, we have $\|z\|_2/\sqrt{d} \leq\|z\|_\infty\leq
\|z\|_2$, we get
\begin{eqnarray*}
\|\dot x\|_\infty^{1+\alpha}
&\leq& c \max_{j=1,\ldots, d}
\bigl(\|x_j\|_\infty^\alpha
( \|x_j\|_\infty+ \|\dot x_j\|_\infty+ |x_j|_\alpha)
\bigr)\\
&\leq& c \|x\|_\infty^\alpha
( \|x\|_\infty+ \|\dot x\|_\infty+ |\dot x|_\alpha),
\end{eqnarray*}
where $c$ is increased as needed. This completes the proof.
\end{pf}

The following bound for the density $\varphi$ will be used to show that
the stationary distributions of approximations for the sampling
SPDE (\ref{E:SPDE}) are uniformly integrable.
\begin{lemma}\label{L:U-bound}
Let $\varphi$ be the density from Lemma \ref{L:density}, $U = \log
\varphi$
and $\nu= Q_0^{0,x_-;T,x_+}$ and $\alpha\in(0,1)$. Then for every
$\varepsilon>0$ there is an $M>0$ such that for $\nu$-almost all $x$ we
have
\[
U(x) \leq\varepsilon\|x\|_{\rC^{1+\alpha}}^2 + M.
\]
\end{lemma}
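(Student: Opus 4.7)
Starting from the explicit formula for $\phi$ in Lemma~\ref{L:density}, taking logarithms gives
\begin{equation*}
U(x) = -\log Z + m\la f(x_+),\dot x(T)\ra - m\la f(x_-),\dot x(0)\ra - m\!\int_0^T\!\la Df(x)\dot x,\dot x\ra\,dt + \int_0^T\!\la f(x),\dot x\ra\,dt - \tfrac12\!\int_0^T\!|f(x)|^2\,dt.
\end{equation*}
The last, non-positive, term is discarded for the upper bound on $U$; moreover one may assume $\alpha<1/2$, since for $\alpha\geq 1/2$ one has $\|x\|_{\rC^{1+\alpha}}=\infty$ for $\nu$-a.a.\ $x$ by Lemma~\ref{L:stat-is-Hoelder}, rendering the bound trivial. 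The plan is to estimate each of the remaining four terms by $\eps\|x\|_{\rC^{1+\alpha}}^2+C_\eps$, using Lemma~\ref{L:interpolation} (yielding $\|\dot x\|_\infty\le c\|x\|_\infty^{\alpha/(1+\alpha)}\|x\|_{\rC^{1+\alpha}}^{1/(1+\alpha)}$), the sub-linear growth $|f(x)|\le c+|x|^\beta$ with $\beta<1$, the trivial $\|x\|_\infty\le\|x\|_{\rC^{1+\alpha}}$, and Young's inequality.

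The two boundary terms satisfy $|m\la f(x_\pm),\dot x(\cdot)\ra|\le C\|\dot x\|_\infty\le C\|x\|_{\rC^{1+\alpha}}$, hence by Young are $\le \eps\|x\|_{\rC^{1+\alpha}}^2+C_\eps$. For the linear integral, Cauchy--Schwarz and the growth bound give $|\int\la f(x),\dot x\ra dt|\le\|\dot x\|_\infty\int(c+|x(t)|^\beta)dt\le C\|\dot x\|_\infty(1+\|x\|_\infty^\beta)$; combined with Lemma~\ref{L:interpolation} and $\|x\|_\infty\le\|x\|_{\rC^{1+\alpha}}$ this is at most $C\|x\|_{\rC^{1+\alpha}}^{1+\beta}$, and since $1+\beta<2$ Young's inequality finishes.

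The main obstacle is the Jacobian term $-m\int\la Df(x)\dot x,\dot x\ra dt$, for which the crude bound $\le mLT\|\dot x\|_\infty^2\le O(\|x\|_{\rC^{1+\alpha}}^2)$ (via Lemma~\ref{L:interpolation}) only provides an $O(1)$, not an $\eps$, constant. The plan is to combine it with the two boundary terms via distributional integration by parts, which is justified because $\nu$-a.a.\ $x$ lies in $\rC^{1+\alpha'}$ for every $\alpha'<1/2$. This rewrites the combination as $m\la f(x),\ddot x\ra$ in the Sobolev duality sense $\la\cdot,\cdot\ra_{\rH^s,\rH^{-s}}$ for a suitable $s\in(1-\alpha,1)$. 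One then estimates $|m\la f(x),\ddot x\ra|\le m\|f(x)\|_{\rH^s}\|\ddot x\|_{\rH^{-s}}$, splitting $\|f(x)\|_{\rH^s}\le\|f(x)\|_{\rL^2}+L\|x\|_{\rH^s}$, controlling $\|f(x)\|_{\rL^2}$ through the sub-linear growth, and estimating $\|x\|_{\rH^s}$ and $\|\ddot x\|_{\rH^{-s}}=\|x\|_{\rH^{2-s}}$ via Sobolev interpolation between $\|x\|_\infty$ and $\|x\|_{\rC^{1+\alpha}}$. The hardest step is the exponent bookkeeping: it is crucial to use the sub-linear bound on $\|f(x)\|_{\rL^2}$ (rather than the linear Lipschitz estimate) so that $\beta<1$ enters the balance, and this together with $\alpha>0$ ensures that each resulting power of $\|x\|_{\rC^{1+\alpha}}$ is strictly less than $2$; Young's inequality then absorbs them into $\eps\|x\|_{\rC^{1+\alpha}}^2+C_\eps$.
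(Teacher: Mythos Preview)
Your treatment of the boundary terms and the linear integral term matches the paper's, but your plan for the Jacobian term has a genuine gap. After integrating by parts to $m\la f(x),\ddot x\ra$ and passing to the duality bound $m\|f(x)\|_{\rH^s}\|\ddot x\|_{\rH^{-s}}$, you split $\|f(x)\|_{\rH^s}\le\|f(x)\|_{\rL^2}+L\|x\|_{\rH^s}$. The second summand gives rise to $L\,\|x\|_{\rH^s}\|x\|_{\rH^{2-s}}$, a quantity that no longer involves $f$ at all: it is homogeneous of degree~$2$ in $x$ with the fixed coefficient~$L$. Whatever interpolation you perform between $\|x\|_\infty$ and $\|x\|_{\rC^{1+\alpha}}$, the exponents must add up to exactly~$2$ (replace $x$ by $\lambda x$ to see this), so Young's inequality can only turn it into $\eps\|x\|_{\rC^{1+\alpha}}^2+C_\eps\|x\|_\infty^2$; since $\|x\|_\infty\le\|x\|_{\rC^{1+\alpha}}$ is all you have, the resulting coefficient is $\eps+C_\eps$, not $\eps$. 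Your assertion that ``$\beta<1$ together with $\alpha>0$ ensures that each resulting power of $\|x\|_{\rC^{1+\alpha}}$ is strictly less than~$2$'' is therefore false for this term: $\beta$ simply does not appear in it.

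The paper avoids this by neither integrating by parts nor passing to Sobolev duality. It keeps the Jacobian term as is, writes $Df(x)\dot x=\tfrac{d}{dt}f(x)$, uses Cauchy--Schwarz to reach $\bigl\|\tfrac{d}{dt}f(x)\bigr\|_\infty\,\|\dot x\|_\infty$, and then --- this is the point you are missing --- applies Lemma~\ref{L:interpolation} to the composite $f\circ x$ rather than to $x$:
\[
\bigl\|\tfrac{d}{dt}f(x)\bigr\|_\infty\;\le\;\eps\,\|f(x)\|_{\rC^{1+\alpha}}+c_\eps\,\|f(x)\|_\infty .
\]
The sublinear growth now enters in the ``large-constant'' half of the split via $\|f(x)\|_\infty\le c+\|x\|_\infty^\beta$, producing the product $(\eps\|x\|_{\rC^{1+\alpha}}+c\|x\|_\infty^\beta+c)(\eps\|x\|_{\rC^{1+\alpha}}+c\|x\|_\infty)$. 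Here every genuinely quadratic contribution carries a factor~$\eps$, while the sole large-constant piece is $c\,\|x\|_\infty^{1+\beta}$ with $1+\beta<2$, which Young's inequality absorbs. In short, the sublinearity of $f$ must be injected into the \emph{seminorm} part of $f\circ x$ through interpolation on $f\circ x$ itself, not only into the $\rL^2$ part; your Lipschitz splitting discards exactly the information that makes the estimate close.
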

\begin{pf}
We bound the five terms in $U$ one by one. For simplicity we denote
all constants in the following estimates by the symbol $c$, the
meaning of which changes from expression to expression.
\begin{enumerate}[(2)]
\item[(1)] Using the Cauchy--Schwarz inequality we get the bound
\[
\langle f(x_+), \dot x(T) \rangle
\leq|f(x_+)| \|\dot x\|_\infty
\leq|f(x_+)| \|x\|_{\rC^{1+\alpha}}
\leq\varepsilon\|x\|_{\rC^{1+\alpha}}^2 + c.
\]
\item[(2)] A very similar argument gives $-\langle f(x_-),\dot
x(0)\rangle\leq
\varepsilon\|x\|_{\rC^{1+\alpha}}^2 + c$.\vspace*{1pt}
\item[(3)] We can use Young's inequality together with
Lemma \ref{L:interpolation} to conclude that for every $\varepsilon>0$
there is a $c>0$ such that
\[
\|\dot x\|_\infty\leq\varepsilon\|x\|_{\rC^{1+\alpha}} + c \|x\|
_\infty
\]
for all $x\in\rL^2([0,T],\R^d)$. Thus, we have
\begin{eqnarray*}
&&
-\int_0^T \langle Df(x(t)) \dot x(t), \dot x(t) \rangle \,dt\\
&&\qquad\leq \biggl\| \frac{d}{dt} f(x) \biggr\|_2
\| \dot x \|_2
\leq T \biggl\| \frac{d}{dt} f(x) \biggr\|_\infty
\| \dot x \|_\infty\\
&&\qquad\leq \bigl( \varepsilon\|f(x)\|_{\rC^{1+\alpha}} + c \|f(x)\|
_\infty\bigr)
( \varepsilon\|x\|_{\rC^{1+\alpha}} + c \|x\|_\infty).
\end{eqnarray*}
Since $f$ is differentiable with bounded derivatives, we have
$\|f(x)\|_{\rC^{1+\alpha}} \leq c \|x\|_{\rC^{1+\alpha}} + c$ and
by assumption
there is a $\beta<1$ such that $|f(x)| \leq|x|^\beta+c$. Using
these estimates we find
\[
-\int_0^T \langle Df(x(t)) \dot x(t), \dot x(t) \rangle \,dt
\leq( \varepsilon\|x\|_{\rC^{1+\alpha}} + c \|x\|_\infty
^\beta+ c )
( \varepsilon\|x\|_{\rC^{1+\alpha}} + c \|x\|_\infty)
\]
and thus, for every $\varepsilon>0$ there is a $c>0$ such that
\[
-\int_0^T \langle Df(x(t)) \dot x(t), \dot x(t) \rangle \,dt
\leq\varepsilon\|x\|_{\rC^{1+\alpha}}^2 + \varepsilon\|x\|_\infty
^2 + c \|x\|_\infty^{1+\beta} + c.
\]
Since $\beta<1$, this gives the required bound.
\item[(4)] Using the Cauchy--Schwarz inequality again, we get in a
similar way
\[
\int_0^T \langle f(x(t)), \dot x(t)\rangle \,dt
\leq\|f(x)\|_2 \|\dot x\|_2
\leq c ( \|x\|_\infty^\beta+ c ) \|\dot x\|_\infty
\leq\varepsilon\|x\|_{\rC^{1+\alpha}}^2 + c.
\]
\item[(5)] Finally, we have $-\int_0^T |f(x(t))|^2 \,dt
< 0$.
\end{enumerate}
Combining these bounds gives the required result.
\end{pf}
\begin{lemma}\label{L:N-regularity}
The drift $\CN$ defined by (\ref{E:drift}) is locally Lipschitz from
$\CH_{1/4}$ to $\CH_{-7/16}$. Furthermore, one can write $\CN=
\CN_1 + \CN_2 + \CN_3$ such that $\CN_1$ does not depend on $x$ and
such that the bounds
%
%
\begin{subequation}
\begin{eqnarray}
\label{e:nonlineartotal}
\|\CN(x)\|_{\CH_{-7/16}} &\le& c (1 + \|x\|_{\CH_{1/4}}^2
),\\
\label{e:nonlinear2}
\|\CN_2(x) - \CN_2(y)\|_{\CH_{-5/16}} &\le& c \|x-y\|_{\CH_{1/4}}
(\|x\|_{\CH_{1/4}}+ \|y\|_{\CH_{1/4}}),\\
\label{e:nonlinear3}
\|\CN_3(x) - \CN_3(y)\|_{\CH_{-3/16}} &\le& c \|x-y\|_{\CH_{1/4}}
(\|x\|_{\CH_{1/4}}+ \|y\|_{\CH_{1/4}})^2
\end{eqnarray}
\end{subequation}
hold for all pairs $x,y\in\CH_{1/4}$ and for some constant $c>0$.
\end{lemma}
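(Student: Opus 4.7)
The plan is to exploit the identifications of Corollary~\ref{cor:inclinterp}: $\CH_{1/4} \cong \rH^1 \cap \rC_0$, while $\CH_{-r/16} \cong \rH^{-r/4}/\sim$ for $r\in\{3,5,7\}$.  In one space dimension the embedding $\rH^1 \hookrightarrow \rC^{1/2-\eps}$ and its dual $L^1 \hookrightarrow \rH^{-1/2-\eps}$ give the chain $L^1 \hookrightarrow \CH_{-3/16}$, $L^2 \hookrightarrow \CH_{-5/16}$, $\rH^{-1} \cong \CH_{-1/4} \hookrightarrow \CH_{-5/16}$, and $\rH^{-3/2-\eps} \hookrightarrow \CH_{-7/16}$, so each piece of~$\CN$ will land in an explicit classical space.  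Since $\rH^1 \hookrightarrow L^\infty$, the composition $x \mapsto f(x)$ and its derivatives inherit global Lipschitz bounds directly from the hypotheses on~$f$.  Once the three bounds on the~$\CN_i$ are established, local Lipschitz continuity of~$\CN$ from~$\CH_{1/4}$ to~$\CH_{-7/16}$ is immediate via~$\CH_{-5/16}, \CH_{-3/16} \hookrightarrow \CH_{-7/16}$.

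I would split~$\CN$ according to the polynomial order in the triple $(x, \d_t x, \d_t^2 x)$:
\begin{equs}
\CN_1 &= m\bigl(f(x_-)\,\d_t\delta_0 - f(x_+)\,\d_t\delta_T\bigr) - f_i(0)\,\d_k f_i(0)\,, \\
\CN_2(x)_k &= -\bigl(f_i(x)\d_k f_i(x) - f_i(0)\d_k f_i(0)\bigr) - \d_t x_i\bigl(\d_i f_k(x) - \d_k f_i(x)\bigr) \\
           &\qquad + m\,\d_t^2 x_i\bigl(\d_i f_k(x)+\d_k f_i(x)\bigr)\,, \\
\CN_3(x)_k &= m\,\d_t x_i\,\d_t x_j\,\d_{ij}^2 f_k(x)\,.
\end{equs}
The constant $-f_i(0)\d_k f_i(0)$ is transferred into~$\CN_1$ so that $\CN_2(0) = \CN_3(0) = 0$; this is legitimate since $\CN_1$ is only required to be $x$-independent.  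The bound on~$\CN_1$ is immediate from $\d_t\delta_0, \d_t\delta_T \in \rH^{-3/2-\eps}$.

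For~$\CN_2$ I would estimate the three summands individually.  The first is Lipschitz from~$\CH_{1/4}$ into~$L^\infty$ via the global Lipschitz continuity of $f$ and~$\d f$ combined with the sublinear bound $|f(x)| \le |x|^\beta + c$, $\beta < 1$.  The second is a pointwise product of $\d_t x \in L^2$ with a bounded Lipschitz function of~$x$, hence lies in $L^2$ with the desired linear Lipschitz bound.  The third is the delicate piece: $\d_t^2 x \in \rH^{-1}$ is multiplied by $h(x) := \d f(x) + \d f^T(x)$, which by boundedness of $D^2 f$ and the chain rule $\d_t h(x) = D^2 f(x)\,\d_t x$ lies in $\rH^1 \cap L^\infty$ with $\|h(x)\|_{\rH^1} \le c(1+\|x\|_{\rH^1})$.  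Since $\rH^1$ is a Banach algebra acting on~$\rH^1_0$ in one dimension, the map $\varphi \mapsto h(x)\varphi$ is bounded on~$\rH^1_0$ with norm~$\le c(1+\|x\|_{\rH^1})$, and dualising yields $h(x)\,\d_t^2 x \in \rH^{-1} \hookrightarrow \CH_{-5/16}$ with the required quadratic bound; the Lipschitz estimate follows by applying the same argument to $h(x)\d_t^2 x - h(y)\d_t^2 y = h(x)\d_t^2(x-y) + (h(x) - h(y))\d_t^2 y$.

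For~$\CN_3$, the product $\d_t x_i\,\d_t x_j$ lies in $L^1$ with $\|\d_t x_i\,\d_t x_j\|_{L^1} \le \|x\|_{\rH^1}^2$, and multiplication by the bounded function $\d^2 f(x)$ yields $\CN_3(x) \in L^1 \hookrightarrow \CH_{-3/16}$.  The cubic Lipschitz bound comes from writing
\begin{equ}
\CN_3(x)_k - \CN_3(y)_k = m\,\d_t(x_i-y_i)\,\d_t x_j\,\d_{ij}^2 f_k(x) + m\,\d_t y_i\,\d_t(x_j-y_j)\,\d_{ij}^2 f_k(x) + m\,\d_t y_i\,\d_t y_j\bigl(\d_{ij}^2 f_k(x) - \d_{ij}^2 f_k(y)\bigr),
\end{equ}
controlling the first two terms in~$L^1$ by Cauchy--Schwarz and using $\|\d^2 f(x) - \d^2 f(y)\|_\infty \le c\|x-y\|_\infty \le c\|x-y\|_{\rH^1}$ for the third; the last line is what produces the factor~$(\|x\|+\|y\|)^2$.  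Summing the three contributions yields $\|\CN(x)\|_{\CH_{-7/16}} \le c(1 + \|x\|^2_{\CH_{1/4}})$.  The main obstacle is the $\d_t^2 x$-summand of~$\CN_2$: all other pieces reduce to pointwise-in-$t$ estimates, whereas this one forces the duality/Sobolev-algebra argument above and is what limits the target space for~$\CN_2$ to~$\CH_{-5/16}$, rather than the better~$\CH_{-3/16}$ available for~$\CN_3$.
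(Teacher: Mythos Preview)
Your argument is correct and follows essentially the same route as the paper: identify $\CH_{1/4}$ with $\rH^1\cap\rC_0$ via Corollary~\ref{cor:inclinterp}, place each of the five summands of~$\CN$ into the appropriate Sobolev space ($\rH^1$, $L^2$, $L^1\hookrightarrow\rH^{-1/2-\eps}$, $\rH^{-1}$, $\rH^{-3/2-\eps}$ respectively), and then read off the interpolation indices; the decomposition $\CN_1=\text{Dirac part}$, $\CN_3=\text{quadratic-in-}\d_t x\text{ part}$, $\CN_2=\text{rest}$ is exactly the paper's. Your transfer of the constant $-f_i(0)\d_k f_i(0)$ into~$\CN_1$ is a small but genuine improvement: without it the bound~\eqref{e:nonlinear2} as literally stated fails near $x=y=0$ (take $y=0$, $\|x\|$ small), a point the paper glosses over with ``follow easily'' since only the difference $\CN_2(x)-\CN_2(y)$ is ever used downstream.
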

\begin{pf}
We use the characterization of the spaces $\CH_\alpha$ from
Corollary \ref{cor:inclinterp} and, in particular, the fact that if $x
\in\CH_{1/4}$, then $x$ also belongs to $\rH^1$. Since we assumed
that $f_j$ and its derivatives up to the second order are globally
Lipschitz, this implies that $f_j(x)$, $\partial_i f_j(x)$ and
$\partial_{ij}
f_k(x)$ all belong to $\rH^1$ and their norms are bounded by multiples
of that of $x$.

Now let $x\in\rH^1$. Then the following statements hold:
\begin{itemize}
\item$f_i(x) \,\partial_k f_i(x) \in\rH^1$ since $\rH^1$ is stable under
composition with smooth functions,
\item$\partial_t
x_i [\partial_i f_k(x) - \partial_k f_i(x)] \in\rL^2$,
for the same reason,
\item$\partial_t x_i \,\partial_t x_j \,\partial_{ij}^2 f_k(x)
\in\rH^t$ for all
$t<-1/2$ since, in this case, $\rL^1 \subseteq\rH^t$ by Sobolev embedding,
\item$\partial_t^2 x_j [\partial_j f_k(x) + \partial_k f_j(x)
]\in\rH^{-1}$ since
$\rH^{-1}$ is stable under multiplication by $\rH^1$-functions,
\item$f_k(x_-)\partial_t\delta_0 \in\rH^t$ and
$f_k(x_+)\partial_t\delta_T \in\rH^t$ for every $t<-3/2$.
\end{itemize}
It follows that $\CN$ maps $\CH_{1/4}$ into $\CH_{\alpha}$ for every
$\alpha< -{3\over8}$. In particular, it maps $\CH_{1/4}$ into $\CH_{-{7
/16}}$ as stated and the bound (\ref{e:nonlineartotal}) holds.
We then define $\CN_1$ as the term proportional to $f_k(x_-)\,\partial
_t\delta_0
- f_k(x_+)\,\partial_t\delta_T$, $\CN_3$ as the term proportional to
$\partial_t x_i\,
\partial_t x_j \,\partial_{ij}^2 f_k(x)$ and $\CN_2$ as the sum of
the remaining
terms in the nonlinearity. With these definitions at hand, the bounds
(\ref{e:nonlinear2}) and (\ref{e:nonlinear3}) follow easily.
\end{pf}
\begin{proposition}\label{P:local}
For every initial condition $x_0\in\rL^2([0,T], \R^d)$,
the stochastic evolution equation (\ref{E:SPDE}) has a unique
maximal local solution $(x,\tau^*)$. The solution satisfies
$x(\tau) \in\CH_{1/4}$ for every $\tau<\tau^*$ a.s. and
$\sup_{\tau\uparrow\tau^*} \|x(\tau)\|_{\rL^2} = \infty$ a.s. on
the set $\{\tau^*<\infty\}$.
\end{proposition}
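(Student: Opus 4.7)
The plan is to solve \eqref{E:SPDE} pathwise in mild form via a Banach fixed-point argument on a small random interval, exploiting the smoothing effect of the semigroup $S(\tau)$ generated by $\CL$ to absorb both the roughness of $\rL^2$-valued initial data and the low regularity of $\CN$. Set
\begin{equ}
  Z(\tau) \;=\; S(\tau)(x_0-\bar x) + \bar x + \sqrt{2}\int_0^\tau S(\tau-r)\,dw(r).
\end{equ}
Proposition~\ref{P:linear} together with analyticity of $S(\cdot)$ on the scale $\CH_\alpha = \CD((-\CL)^\alpha)$ shows that $Z$ is almost surely continuous into $\CH_\alpha$ for every $\alpha<3/8$ on $(0,\infty)$, and that $\tau^{1/4}\|Z(\tau)\|_{\CH_{1/4}}$ is locally bounded by a random quantity depending only on $\|x_0\|_{\rL^2}$ and the driving Wiener path. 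Substituting $x = Z+v$ reduces \eqref{E:SPDE} to the pathwise deterministic integral equation
\begin{equ}
  v(\tau) \;=\; \int_0^\tau S(\tau-r)\,\CN\bigl(Z(r)+v(r)\bigr)\,dr.
\end{equ}

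The contraction rests on two ingredients. Since $\CL$ is self-adjoint and negative, the analytic-semigroup bound $\|S(\tau)\|_{\CH_{-7/16}\to\CH_{1/4}} \leq c\tau^{-11/16}$ holds, and the exponent $11/16<1$ makes this singularity integrable. Lemma~\ref{L:N-regularity} then supplies both the quadratic growth \eqref{e:nonlineartotal} of $\CN$ into $\CH_{-7/16}$ and the locally Lipschitz difference bounds \eqref{e:nonlinear2}--\eqref{e:nonlinear3} for its pieces $\CN_2, \CN_3$ valued in the larger spaces $\CH_{-5/16}$ and $\CH_{-3/16}$, which produce correspondingly milder singularities ($\tau^{-9/16}$ and $\tau^{-7/16}$) in the convolution. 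On the space of continuous functions $v\colon [0,\tau_0]\to\CH_{1/4}$ equipped with the weighted norm $\|v\|_* = \sup_{0<\tau\leq\tau_0} \tau^{1/4}\|v(\tau)\|_{\CH_{1/4}}$ (introduced precisely to absorb the $\tau^{-1/4}$ blow-up of $\|S(\tau)(x_0-\bar x)\|_{\CH_{1/4}}$ when $x_0$ only lies in $\rL^2$), the map $v\mapsto \int_0^\cdot S(\cdot-r)\CN(Z+v)(r)\,dr$ is a strict contraction on a ball of appropriate radius provided $\tau_0$ is small enough. Crucially, the required smallness of $\tau_0$ depends only on $\|x_0\|_{\rL^2}$ and on the realisation of the stochastic convolution on $[0,\tau_0]$. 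Uniqueness on overlapping intervals follows from the same Lipschitz estimates via a singular Gronwall inequality, and standard concatenation produces a maximal solution $(x,\tau^*)$ with $x(\tau)\in\CH_{1/4}$ for every $\tau\in(0,\tau^*)$.

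What remains is the $\rL^2$ blow-up criterion. Assume for contradiction that, on an event of positive probability, $\tau^*<\infty$ and $\sup_{\tau<\tau^*}\|x(\tau)\|_{\rL^2}\leq K$ for some fixed $K$. Choose stopping times $\sigma_n\uparrow\tau^*$ with $\sigma_n<\tau^*$, and restart the preceding fixed-point construction at time $\sigma_n$ with initial datum $x(\sigma_n)\in\rL^2$ and driving noise $w(\sigma_n+\cdot)-w(\sigma_n)$, which by the strong Markov property is independent of $\mathcal{F}_{\sigma_n}$. Because the local lifetime produced by the construction depends only on the $\rL^2$-norm of the initial datum (bounded by $K$) and on the restarted noise path, one finds a deterministic $\delta>0$ such that with positive probability the restarted solution exists for time at least $\delta$; picking $n$ large enough that $\tau^*-\sigma_n<\delta$ then extends the solution past $\tau^*$, contradicting maximality. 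The main obstacle throughout is precisely this lifetime dependence: one must engineer the fixed-point argument so that its existence time is controlled by $\|x_0\|_{\rL^2}$ alone and not by any stronger norm of $x_0$, which is exactly what the weighted-norm setup in the second paragraph achieves.
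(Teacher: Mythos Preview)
Your proposal is correct and follows essentially the same route as the paper: the same weighted norm $\sup_{\tau}\tau^{1/4}\|\cdot\|_{\CH_{1/4}}$, the same smoothing estimate $\|S(\tau)\|_{\CH_{-7/16}\to\CH_{1/4}}\le c\tau^{-11/16}$, and the same splitting of $\CN$ from Lemma~\ref{L:N-regularity} feed a Banach fixed-point on a small interval whose length depends only on $\|x_0\|_{\rL^2}$ and the noise path. The only cosmetic difference is that the paper argues the blow-up criterion purely pathwise (if $\|x\|_{\rL^2}$ stays bounded the segment lengths are bounded below, contradiction), whereas your restart-at-stopping-times phrasing with the strong Markov property is unnecessary overhead---the construction is already $\omega$-by-$\omega$.
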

\begin{pf}
Define
\[
g(\tau) = S(\tau) x_0 + \sqrt{2} \int_0^\tau S(\tau-\sigma)\,
dw(\sigma).
\]
Let $R>U>0$. For $x\dvtx(0,U]\to\CH_{1/4}$ continuous, define
\[
\| x \|_* = \sup_{\tau\in(0,U]} \tau^{1/4} \|x(\tau)\|_{\CH_{1/4}}
\]
and let $\CX$ be the space of all such $x$ with $x(0)=g(0)$ and $\|x\|_*
< \infty$. Then $(\CX$, $\mbox{$\|\quark\|_*$})$ is a Banach space.
We find
\begin{eqnarray*}
\|g\|_*
&\leq&
\sup_{\tau\in(0,U]} \tau^{1/4} \biggl( \frac{1}{\tau^{1/4}} \|
x_0\|_{\rL^2} + \sqrt{2}\biggl \| \int_0^\tau S(\tau-\sigma)\,
dw(\sigma) \biggr\|_{\CH_{1/4}} \biggr) \\
&\leq&\|x_0\|_{\rL^2} + \sqrt{2} R \sup_{\tau\in[0,R]} \biggl\|
\int_0^\tau S(\tau-\sigma) \,dw(\sigma) \biggr\|_{\CH_{1/4}} \\
&=&\!: \|x_0\|_{\rL^2} + C_R
\end{eqnarray*}
and thus, $g\in\CX$ for every $U<R$. Define a map $M_g\dvtx\CX\to
\CX$ by
\[
M_g x(\tau)
= \int_0^\tau S(\tau-\sigma) \CN(x_\sigma) \,d\sigma+
g(\tau)\qquad
\forall\tau\in[0,U].
\]
By the definition of a mild solution, local solutions up to time $U$
coincide with the fixed points of this map.

Let $B(g,1) \subseteq\CX$ denote the closed ball around $g$ with
radius 1. By Lemma \ref{L:N-regularity}, the nonlinearity
$\CN\dvtx\CH_{1/4}\to\CH_{-7/16}$ is locally Lipschitz and thus,
for all $x,y\in B(g,1)$, we have
\begin{eqnarray*}
&&\| M_gx - M_gy \|_*\\
&&\qquad\leq\sup_{\tau\in(0,U]} \tau^{1/4}
\int_0^\tau\bigl\| S(\tau-\sigma) \bigl(\CN(x_\sigma) - \CN
(y_\sigma)\bigr) \bigr\|_{\CH_{1/4}} \,d\sigma\\
&&\qquad\leq\sup_{\tau\in(0,U]} c \tau^{1/4}
\int_0^\tau\biggl(\frac{\| \CN_2(x_\sigma) - \CN_2(y_\sigma
)\|_{\CH_{-5/16}}}{(\tau-\sigma)^{9/16}}\\
&&\qquad\quad\hspace*{76.4pt}{} + \frac{\| \CN
_3(x_\sigma) - \CN_3(y_\sigma)\|_{\CH_{-3/16}}}{(\tau-\sigma
)^{7/16}} \biggr) \,d\sigma\\
&&\qquad\leq\sup_{\tau\in(0,U]} c\tau^{1/4}
\int_0^\tau\| x_\sigma- y_\sigma\|_{\CH_{1/4}}
\biggl(\frac{\| x_\sigma\|_{\CH_{1/4}} + \|y_\sigma\|_{\CH_{1/4}}}{(\tau
-\sigma)^{9/16}}\\
&&\qquad\quad\hspace*{143pt}{}
+ \frac{ (\| x_\sigma\|_{\CH_{1/4}} + \|y_\sigma\|_{\CH
_{1/4}})^2}{(\tau-\sigma)^{7/16}}\biggr) \,d\sigma\\
&&\qquad\leq c U^{1/16} \|x - y\|_* (1 + \|x\|_* + \|y\|_*)^2,
\end{eqnarray*}
where $c$ changes from line to line.
Similarly, we have
\[
\| M_gx - g\|_* \le c U^{1/16} \|x\|_*^2 \leq c U^{1/16}
( \|x-g\|_* + \|g\|_*)^2.
\]
By choosing the final time $U$ sufficiently small, we can then make sure
that $M_g$ is a contraction on the ball $B(g,1)$ and, by the Banach fixed
point theorem, $M_g$ has a unique fixed point. This gives a unique local
solution of (\ref{E:SPDE}) up to time $U$.

By iterating this procedure, every time starting with the final
point of the previously constructed segment, we obtain a solution up
to a maximal time $\tau^*\leq R$. Since the length of each segment of
this solution only depends on the $\rL^2$-norm of its starting point,
we see that $\tau^*<R$ implies $\sup_{\tau<\tau^*} \|x(\tau)\|
_{\rL^2} =
\infty$. Taking $R\to\infty$ completes the proof.
\end{pf}

Even if $f$ is globally Lipschitz, the $\partial_t x_i \,\partial_t
x_j\,\partial_{ij}^2 f_k$-term causes the nonlinearity $\CN$ to be only locally
Lipschitz. Thus, showing the existence of global solutions to the
SDE (\ref{E:SPDE}) will need some care.
\begin{proposition}\label{P:global}
For every initial condition $x_0\in\rL^2([0,T], \R^d)$ the
SPDE (\ref{E:SPDE}) has a unique global solution. For every
$\tau>0$ the solution satisfies\break $\E(\|x(\tau)\|_{\rL
^2}^2) <
\infty$.
\end{proposition}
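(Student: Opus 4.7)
The plan is to extend the local solution from Proposition~\ref{P:local} to a global one by establishing uniform a priori $\rL^2$-moment bounds through a Galerkin approximation that exploits the explicit form of the target distribution from Lemma~\ref{L:density}. Since the nonlinearity only satisfies the quadratic growth bound $\|\CN(x)\|_{\CH_{-7/16}} \le c(1+\|x\|_{\CH_{1/4}}^2)$ from Lemma~\ref{L:N-regularity}, a direct energy estimate on $\|x\|_{\rL^2}^2$ will not close, and the explicit structure of the invariant measure is needed to control the nonlinear contribution.

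For each $n\in\N$ I would consider the finite-dimensional approximation
\begin{equation*}
  dx^{(n)}(\tau) = \CL\bigl(x^{(n)} - \hat\Pi_n \bar x\bigr)\,d\tau + \hat\Pi_n \CN\bigl(x^{(n)}\bigr)\,d\tau + \sqrt{2}\,\hat\Pi_n \,dw(\tau)
\end{equation*}
on $E_n = \vspan\{e_1,\dots,e_n\}$, using the regularising projection from Lemma~\ref{L:pi-n-hat} in preference to the orthogonal projection (the latter interacts poorly with the quadratic term in $\CN$, as already noted before~\eqref{e:Pi-n}). This is a finite-dimensional SDE with locally Lipschitz drift and strictly dissipative linear part, hence admits a unique global solution together with a unique invariant probability measure $\mu_n$. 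An application of Girsanov's theorem along the lines of Lemma~\ref{L:density} identifies $\mu_n$ as being absolutely continuous with respect to the projected Gaussian $\nu_n$, its density $\phi_n$ inheriting the pointwise bound $\phi_n(x) \le \exp(\eps\|x\|_{\rC^{1+\alpha}}^2+M)$ from Lemma~\ref{L:U-bound}. Combining this with Fernique's theorem applied to $\nu_n$, whose samples are $\rC^{1+\alpha}$ by Lemma~\ref{L:stat-is-Hoelder} and whose covariances are uniformly controlled by $-\CL^{-1}$, then yields $\sup_n \E_{\mu_n}\|x\|_{\rL^2}^2 < \infty$.

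To pass from stationarity to arbitrary initial data $x_0$, I would apply It\^o's formula on $E_n$ to the squared distance $\|x^{(n)}(\tau)-\bar x\|_{\rL^2}^2$, balancing the dissipation $-2\|x^{(n)}-\bar x\|_{\CH_{1/2}}^2$ against the nonlinear contribution, which is estimated using the splitting $\CN=\CN_1+\CN_2+\CN_3$ from Lemma~\ref{L:N-regularity} combined with the interpolation identities $\CH_\alpha \cong \rH^{4\alpha}$ from Corollary~\ref{cor:inclinterp} and Young's inequality. Absorbing the dangerous cubic term involving $\CN_3$ into the dissipation by interpolation between $\rL^2$, $\CH_{1/4}$ and $\CH_{1/2}$, and using the stationary bound from the previous step as a ceiling, a Gr\"onwall-type argument produces $\sup_n\sup_{\tau\le T}\E\|x^{(n)}(\tau)\|_{\rL^2}^2 < \infty$ for each $T>0$. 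The contraction argument of Proposition~\ref{P:local}, combined with the convergence $\hat\Pi_n\to\mathrm{id}$ on $\CH_{1/4}$ from Lemma~\ref{L:pi-n-hat}, shows that $x^{(n)}\to x$ on the maximal interval $[0,\tau^*)$, so the uniform bound rules out the blow-up scenario $\sup_{\tau\uparrow\tau^*}\|x(\tau)\|_{\rL^2}=\infty$ identified in Proposition~\ref{P:local}. Thus $\tau^*=\infty$ almost surely, and Fatou's lemma transfers the moment bound to $x$.

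The principal obstacle will be closing the Gr\"onwall estimate in the third step: the dissipation of $\CL$ is quadratic only in the very strong norm $\CH_{1/2}$, while $\CN$ grows quadratically already in $\CH_{1/4}$, so the estimate only barely closes, and one must use the full strength of the interpolation theory from Section~\ref{S:sobolev} together with the a priori ceiling provided by the stationary bound to absorb the most singular cubic contribution coming from $\CN_3$.
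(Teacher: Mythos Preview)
Your proposal follows a genuinely different route from the paper, and the core energy step (your third paragraph) does not close as written.

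The paper does not use Galerkin approximations or the invariant measure at all for global existence. Instead it subtracts the solution $y$ of the \emph{linear} equation~\eqref{E:linear-SPDE} and studies $z=x-y$, which satisfies the pathwise deterministic equation $dz = \CL z\,d\tau + \CN(z+y)\,d\tau$. One then computes $d\|z\|_{\rL^2}^2/d\tau$ and obtains the dissipative term $-c\|z\|_{\rH^2}^2$. Crucially, the nonlinear pairing $\langle z,\CN(z+y)\rangle$ is not estimated via the abstract bounds of Lemma~\ref{L:N-regularity}; instead each of the five terms in~\eqref{E:drift} is treated by hand, using integration by parts in $t$ to move derivatives off the quadratic factor $\d_t(z_i+y_i)\,\d_t(z_j+y_j)$. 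This works because $z$, unlike $x$, is controlled in $\rH^2$ by the dissipation. After interpolation one obtains a Gronwall coefficient $c(1+\|y\|_{\rH^1}^{16/9})$; since $16/9<2$ and $y$ is Gaussian, Fernique's theorem gives $\E\|z(\tau)\|_{\rL^2}^2<\infty$.

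Your direct It\^o estimate on $\|x^{(n)}-\bar x\|_{\rL^2}^2$ runs into two concrete obstructions. First, the It\^o correction term equals $2\tr(\hat\Pi_n^2)$, which is of order $n$ and diverges as $n\to\infty$; without subtracting the stochastic convolution there is nothing on the right-hand side to absorb it. Second, the abstract bound~\eqref{e:nonlinear3} only gives $|\langle x,\CN_3(x)\rangle|\le c\|x\|_{\CH_{3/16}}\|x\|_{\CH_{1/4}}^3$, and interpolating everything between $\rL^2$ and $\CH_{1/2}$ leaves a residual term of the form $C\|x\|_{\rL^2}^{p}$ with $p\gg 2$ after the $\CH_{1/2}$-part is absorbed; the resulting differential inequality is superlinear and does not yield a global bound. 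The phrase ``using the stationary bound as a ceiling'' does not repair this: a moment bound under $\mu_n$ says nothing about $\E_{x_0}\|x^{(n)}(\tau)\|_{\rL^2}^2$ for fixed $\tau$ and arbitrary $x_0$ unless one already has a Lyapunov-type drift condition, which is precisely what is missing.

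The fix is to adopt the decomposition $z=x-y$: it simultaneously removes the divergent trace term (the equation for $z$ has no noise) and makes available the $\rH^2$-control on $z$ needed for the integration-by-parts that tames the quadratic derivative term in $\CN$.
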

\begin{pf}
From Proposition \ref{P:local} we know that (\ref{E:SPDE}) has a
local solution $(x,\tau_{\max})$. Let $y$ be the solution of the
linear SPDE from Proposition \ref{P:linear}, that is,
\[
y(\tau) = S(\tau)(x_0 - \bar x)
+ \sqrt{2} \int_0^\tau S(\tau- \sigma) \,dw(\sigma) + \bar x
\]
and define $z(\tau) = x(\tau) - y(\tau)$ for every $\tau\in[0,\tau
_{\max})$.
Then $z$ satisfies the stochastic evolution equation
\[
dz(\tau) = \CL z(\tau) \,d\tau+ \CN\bigl(z(\tau) + y(\tau)
\bigr) \,d\tau,\qquad
z(0) = 0.
\]
Thus $\|z(\tau)\|_{\rL^2}^2$ satisfies
%
%
\begin{eqnarray}\label{E:z-norm-ODE}
\frac{d \|z(\tau)\|_{\rL^2}^2}{d\tau}
&=& 2 \bigl\langle z(\tau), \CL z(\tau)
+ \CN\bigl(z(\tau)+y(\tau)\bigr) \bigr\rangle\nonumber\\
&=& -4m^2 \langle\partial_t^2 z(\tau), \partial_t^2 z(\tau) \rangle
- 4m | \partial_t z(0) |^2
- 4m | \partial_t z(1) |^2 \nonumber\\[-8pt]\\[-8pt]
&&{}
- \langle\partial_t z(\tau), \partial_t z(\tau) \rangle
+ 2 \bigl\langle z(\tau), \CN\bigl(z(\tau)+y(\tau)\bigr)
\bigr\rangle
\nonumber\\
&\leq&- c \| z(\tau) \|_{\rH^2}^2
+ 2 \bigl\langle z(\tau), \CN\bigl(z(\tau)+y(\tau)\bigr)
\bigr\rangle
\nonumber
\end{eqnarray}
for some $c>0$. This formal calculation can be made rigorous by a
standard approximation argument, using, for example, Galerkin
approximations.

We require {a priori} bounds of the form $\langle z, \CN(z+y)
\rangle\leq c \|z\|_{\rL^2}^2 + \varepsilon\|z\|_{\rH^2}^2 +
c$ where $\varepsilon>0$ is
small enough to be compensated by the negative $\|z\|_{\rH^2}^2$-term
in~(\ref{E:z-norm-ODE}).\vspace*{2pt}

In order to obtain the required bounds, we consider the five terms
from the definition of $\CN$ individually. For the purpose of these
estimates we denote all numerical constants by $c>0$ and only track
the $y$-dependency of the bounds explicitly. For the first term we
get
\[
\langle z_k, -f_i(z+y) \,\partial_k f_i(z+y)\rangle
\leq c \|z\|_{\rL^2} \|f(z+y)\|_{\rL^2}
\leq c \|z\|_{\rL^2}^2 + c \|y\|_{\rL^2}^2 + c.
\]
For the second term we find
\begin{eqnarray*}
&&
\langle z_k,
\partial_t(z_i+y_i) \,\partial_t(z_j+y_j) \,\partial^2_{ij} f_k(z+y)
\rangle\\
&&\qquad= \int_0^1 z_k \,\partial_tz_i \,\partial_t\bigl(\partial
_if_k(z+y)\bigr) \,dt
+ \int_0^1 z_k \,\partial_t y_i \,\partial_t(z_j+y_j) \,\partial
^2_{ij} f_k(z+y) \,dt \\
&&\qquad= - \int_0^1 \partial_t z_k\, \partial_tz_i \,\partial_if_k(z+y) \,dt
- \int_0^1 z_k \,\partial_t^2 z_i \,\partial_if_k(z+y) \,dt \\
&&\qquad\quad{}
+ \int_0^1 z_k \,\partial_t y_i \,\partial_t(z_j+y_j) \,\partial
^2_{ij} f_k(z+y) \,dt \\
&&\qquad\leq c \|z\|_{\rH^1}^2 + c \|z\|_{\rL^2} \|z\|_{\rH^2} + c \|z\|
_{\rL^\infty} \|y\|_{\rH^1}( \|z\|_{\rH^1} + \|y\|_{\rH
^1}).
\end{eqnarray*}
For the third term we have
\[
\langle z_k, -\partial_t(z_i+y_i) (\partial_i f_k - \partial
_k f_i) \rangle
\leq c \|z\|_{\rL^2} ( \|z\|_{\rH^1} + \|y\|_{\rH^1} ).
\]
The fourth term can be bounded as
\begin{eqnarray*}
&&
\langle z_k,
\partial_t^2(z_i+y_i) (\partial_i f_k + \partial_k f_i)
\rangle\\
&&\qquad= \langle z_k,
\partial_t^2 z_i (\partial_i f_k + \partial_k f_i) \rangle\\
&&\qquad\quad{}
- \int_0^1 \partial_t z_k \,\partial_t y_i (\partial_i f_k +
\partial_k f_i) \,dt\\
&&\qquad\quad{}
- \int_0^1 z_k \,\partial_t y_i \,\partial_t(z_j+y_j) (\partial
^2_{ij} f_k + \partial^2_{jk} f_i) \,dt \\
&&\qquad\leq c \|z\|_{\rL^2} \|z\|_{\rH^2} + c \|z\|_{\rH^1} \|y\|_{\rH^1}
+ c \|z\|_{\rL^\infty} \|y\|_{\rH^1}( \|z\|_{\rH^1}+ \|y\|
_{\rH^1}).
\end{eqnarray*}
Finally, for the fifth term involving the derivatives of Dirac
distributions, we get
\[
\langle z, \partial_t \delta_0\rangle
= - z'(0)
\leq c \|z\|_{\rH^\alpha},\qquad
\langle z, -\partial_t \delta_1\rangle
= z'(1)
\leq c \|z\|_{\rH^\alpha}
\]
for every $\alpha>3/2$.

To convert the bounds into the required form first note that for
every $s\in(0,2)$ the interpolation inequality (see, e.g.,
\cite{Hairer08}, Corollary 6.11) gives $\|z\|_{\rH^s}^2 \leq
\|z\|_{\rL^2}^{2-s} \|z\|_{\rH^2}^s$ and, using Young's inequality, we
can, for every $\varepsilon>0$, find a $c>0$ such that
\[
\|z\|_{\rL^2}^{2-s} \|z\|_{\rH^2}^s
\leq c \|z\|_{\rL^2}^2 + \varepsilon\|z\|_{\rH^2}^2.
\]
Using this relation we find a $c>0$ such that
\begin{eqnarray*}
\|z\|_{\rL^\infty} \|y\|_{\rH^1}^2
&\leq&\tfrac12 \|z\|_{\rL^\infty}^2 + \tfrac12 \|y\|_{\rH^1}^4
\leq c \|z\|_{\rH^1}^2 + c \|y\|_{\rH^1}^4\\
&\leq& c \|z\|_{\rL^2}^2 + \varepsilon\|z\|_{\rH^2}^2 + c \|y\|_{\rH^1}^4.
\end{eqnarray*}
The terms of the form $\|z\|_{\rL^\infty} \|z\|_{\rH^1} \|y\|_{\rH
^1}$ can
be bounded using the relation
\[
\|z\|_{\rL^\infty} \|z\|_{\rH^1}
\leq c \|z\|_{\rH^{3/4}} \|z\|_{\rH^1}
\leq c \|z\|_{\rL^2}^{5/8} \|z\|_{\rH^2}^{3/8} \|z\|_{\rL
^2}^{1/2} \|z\|_{\rH^2}^{1/2}
= c \|z\|_{\rL^2}^{9/8} \|z\|_{\rH^2}^{7/8}.
\]
Applying Young's inequality with $p=16/7$ and $q=16/9$ we find a
$c>0$ such that
\[
\|z\|_{\rL^\infty} \|z\|_{\rH^1} \|y\|_{\rH^1}
\leq c \|z\|_{\rH^2}^{7/8} \|z\|_{\rL^2}^{9/8} \|y\|_{\rH^1}
\leq\varepsilon\|z\|_{\rH^2}^2 + c \|z\|_{\rL^2}^2 \|y\|_{\rH^1}^{16/9}.
\]

Combining all these estimates, we find that for every $\varepsilon>0$ there
is a $c>0$ such that
\[
\bigl\langle z(\tau), \CN\bigl(z(\tau)+y(\tau)\bigr)
\bigr\rangle
\leq c (1 + \|y\|_{\rH^1}^{16/9}) \|z\|_{\rL^2}^2
+ \varepsilon\|z\|_{\rH^2}^2
+ c (1 + \|y\|_{\rL^2}^2 + \|y\|_{\rH^1}^4)
\]
and substituting this bound into (\ref{E:z-norm-ODE}) for small
enough $\varepsilon>0$ we get
\[
\frac{d \|z(\tau)\|_{\rL^2}^2}{d\tau}
\leq c (1 + \|y\|_{\rH^1}^{16/9})\|z(\tau)\|
_{\rL^2}^2
+ c (1 + \|y\|_{\rL^2}^2 + \|y\|_{\rH^1}^4).
\]
Gronwall's inequality gives
%
%
\begin{eqnarray}\label{E:Gronwall}
\|z(\tau)\|_{\rL^2}^2
&\leq& c \int_0^\tau\bigl(1 + \|y(\sigma)\|_{\rH^1}^{16/9}\bigr)
\int_0^\sigma\bigl(1 + \|y(r)\|_{\rL^2}^2 + \|y(r)\|_{\rH
^1}^4\bigr) \,dr \nonumber\\
&&\hspace*{20pt}{}\times
\exp\biggl(\int_\sigma^\tau\bigl(1 + \|y(r)\|_{\rH^1}^{16/9}
\bigr) \,dr\biggr) \,d\sigma\\
&&{}
+ c \int_0^\tau\bigl(1 + \|y(\sigma)\|_{\rL^2}^2 + \|y(\sigma)\|
_{\rH^1}^4\bigr) \,d\sigma.
\nonumber
\end{eqnarray}
Thus, $\|z\|_{\rL^2}$ cannot explode in finite time and from
Proposition \ref{P:local} we get $\tau_{\max}=\infty$.

By Proposition \ref{P:linear} we have $y\in
\rL^2([0,\tau],\rH^1)$. Hence, by Fernique's theorem (see,
e.g., \cite{Hairer08}, Theorem 3.11),
\[
\E\biggl(\exp\biggl(\varepsilon\int_0^\tau\|y(r)\|_{\rH^1}^2
\,dr\biggr) \biggr)
< \infty
\]
for sufficiently small $\varepsilon>0$. Thus, using the fact that
$16/9 <
2$, we see that the right-hand side of (\ref{E:Gronwall}) has finite
expectation for all $\tau>0$.
\end{pf}

Now the only part of Theorem \ref{T:answer} which we still need to
prove is the statement about the stationary distribution
of (\ref{E:SPDE}). This can be done using a finite-dimensional
approximation argument, similar to the proofs in \cite{Zabczyk88}
and \cite{HairerStuartVoss07}, Section 3. Since these articles
assumed that $U$ was bounded from above and also assumed different
regularity properties for the drift, the proof needs to be adapted for
the situation here; to allow for easier reading, we include the full
argument instead of just enumerating the required changes.
\begin{proposition}\label{P:stationary}
The distribution $Q_f^{0,x_-;T,x_+}$ is invariant for (\ref{E:SPDE}).
\end{proposition}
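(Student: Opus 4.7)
The plan is a finite-dimensional Galerkin approximation argument in the spirit of \cite{Zabczyk88} and \cite[section~3]{HairerStuartVoss07}, with modifications to handle the unboundedness of $U$ from above and the low regularity of $\CN$ given by lemma~\ref{L:N-regularity}. The starting observation is that $\CN$ from~\eqref{E:drift} is precisely the $\rL^2$-valued functional derivative of $U=\log\phi$ from lemma~\ref{L:density}, where the boundary Dirac terms encode the contribution from free variation of $\dot x$ at the endpoints. This identity is verified by differentiating the five summands of $U$ one at a time, integrating by parts to move derivatives off the variation, and collecting the resulting boundary contributions.

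For each $n$, introduce on $E_n = \vspan\{e_1,\ldots,e_n\}$ the Galerkin SDE
\begin{equ}[E:galerkin]
  dx_n = \CL\bigl(x_n-\Pi_n \bar x\bigr)\,d\tau + \hat\Pi_n\CN\bigl(\hat\Pi_n x_n\bigr)\,d\tau + \sqrt{2}\,\Pi_n\,dw.
\end{equ}
Since $\hat\Pi_n x_n$ lies in a finite-dimensional space of smooth functions, the drift is locally Lipschitz on $E_n$, and the \textit{a priori} bounds from the proof of proposition~\ref{P:global} adapt uniformly in $n$ to give unique global solutions. Setting $V_n(x) = U(\hat\Pi_n x)$ on $E_n$, the self-adjointness of $\hat\Pi_n$ together with the gradient structure above gives $\nabla V_n(x) = \hat\Pi_n \CN(\hat\Pi_n x)$. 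Hence~\eqref{E:galerkin} is a reversible finite-dimensional diffusion with explicit invariant distribution
\begin{equ}
  \mu_n(dx) = Z_n^{-1}\exp\bigl(V_n(x)\bigr)\,\nu_n(dx),
\end{equ}
where $\nu_n$ is the finite-dimensional Gaussian on $E_n$ induced by $\nu = Q_0^{0,x_-;T,x_+}$ through $\Pi_n$.

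The limit $n\to\infty$ proceeds in two pieces. For weak convergence $\mu_n\to\mu = Q_f^{0,x_-;T,x_+}$ on $\rL^2$: lemmas~\ref{L:stat-is-Hoelder} and~\ref{L:pi-n-hat}(e) give $\hat\Pi_n x\to x$ in $\rC^{1+\alpha}$ for $\nu$-a.e.\ $x$ and some $\alpha\in(0,1/2)$, so by continuity of $U$ on $\rC^{1+\alpha}$ one has $V_n\to U$ pointwise; uniform integrability of $\exp V_n$ against $\nu_n$ follows by pairing the sub-quadratic bound of lemma~\ref{L:U-bound} with Fernique's theorem for $\|\quark\|_{\rC^{1+\alpha}}^2$ under $\nu$. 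For convergence of the Galerkin solutions $x_n(\tau)\to x(\tau)$ in probability in $\rL^2$ at fixed $\tau>0$: combine the locally Lipschitz bound of lemma~\ref{L:N-regularity} with uniform-in-$n$ \textit{a priori} estimates. Then, for any $\varphi\in\rC_b(\rL^2)$ and $x_n(0)\sim\mu_n$, finite-dimensional invariance gives $\E\varphi(x_n(\tau)) = \int\varphi\,d\mu_n$, and passing to the limit on both sides yields $\int T_\tau\varphi\,d\mu = \int\varphi\,d\mu$, the required invariance.

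The principal obstacle is the conflict between the two projections: $\Pi_n$ is needed so that the noise $\Pi_n dw$ is a standard Brownian motion on $E_n$ and $\nu_n$ is a clean finite-dimensional Gaussian, but $\Pi_n\CN(x)$ is ill-defined as a function of $x\in E_n$ because $\CN$ depends on $\d_t x$ and $\d_t^2 x$ in a way that is not $\rL^2$-continuous in $x$. The remedy is to evaluate $\CN$ at the smoothed argument $\hat\Pi_n x$, using lemma~\ref{L:pi-n-hat}(d), while exploiting the structural identity $\hat\Pi_n\Pi_n = \hat\Pi_n$ from lemma~\ref{L:pi-n-hat}(a) to preserve the gradient identity $\hat\Pi_n\CN(\hat\Pi_n x) = \nabla V_n(x)$. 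The remaining technical work is to make the convergence $x_n(\tau)\to x(\tau)$ strong enough to commute with the weak limit $\mu_n\to\mu$.
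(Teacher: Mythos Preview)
Your proposal is correct and follows essentially the same Galerkin argument as the paper: identify $\CN$ as the Fr\'echet derivative of $U=\log\phi$, use the mixed-projection drift $\hat\Pi_n\CN(\hat\Pi_n\,\cdot\,)$ so that the finite-dimensional SDE is a gradient system on $E_n$ with invariant measure $e^{U\circ\hat\Pi_n}\,d\nu_n$, prove $x_n\to x$ using the local Lipschitz estimates of lemma~\ref{L:N-regularity}, and pass to the limit with lemmas~\ref{L:U-bound}, \ref{L:stat-is-Hoelder}, \ref{L:pi-n-hat} together with Fernique's theorem. You have also correctly isolated the reason for the $\hat\Pi_n$ device.

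One presentational difference worth noting: the paper passes to the limit in the two-function reversibility identity $\int\phi\,\CP^n_\tau\psi\,d\mu_n = \int\psi\,\CP^n_\tau\phi\,d\mu_n$ by rewriting $d\mu_n = e^{U(\hat\Pi_n\,\cdot\,)}\,d\nu$ and applying dominated convergence pointwise in~$x$, using the convergence $\CP^n_\tau\psi(\Pi_n x)\to\CP_\tau\psi(x)$ for \emph{deterministic} initial data $\Pi_n x_0$. Your formulation with $x_n(0)\sim\mu_n$ and $x(0)\sim\mu$ implicitly asks for a coupling of these random initial conditions; once unfolded it reduces to the same dominated-convergence computation, but the paper's version sidesteps the coupling issue entirely and is slightly cleaner to write down.
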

\begin{pf}
Let $\varphi$ be the density of $\mu=Q_f^{0,x_-;T,x_+}$ w.r.t.
$\nu=Q_0^{0,x_-;T,x_+}$ as given by Lemma \ref{L:density} and let $U
= \log\varphi$. Then we can compute the derivative of $U$ at
$x\in\CH_{1/4}$ in direction $h\in\CH_{7/16}$ as
\begin{eqnarray*}
\langle DU(x), h \rangle
&=& m f_k(x_+) \dot h_k(T) - m f_k(x_-) \dot h_k(0) \\
&&{} + 2 \int_0^T \biggl(
- f_i \,\partial_k f_i
+ m \dot x_i \dot x_j \,\partial_{ij}^2 f_k
- \frac12 \dot x_i (\partial_i f_k - \partial_k f_i)\\
&&\hspace*{147.6pt}{}+ m\ddot x_i (\partial_i f_k + \partial_k f_i)
\biggr) h_k(t) \,dt \\
&=& \langle\CN(x), h \rangle.
\end{eqnarray*}
Here we used the fact that, by Corollary \ref{cor:inclinterp}, $h\in
\CH_{7/16}$ implies $h(0) = h(T) = 0$. This shows that the
function $U$ is Fr\'echet-differentiable with derivative $\CN$. Let
$\Pi_n$ and $\hat\Pi_n$ be as in Lemma \ref{L:pi-n-hat} and define
the approximations
\[
\CN_n = (U\circ\hat\Pi_n)' = \hat\Pi_n \CN(\hat\Pi
_n \quark)
\]
for $n\in\N$.

Consider the $n$-dimensional SDEs
\[
dy_n(\tau) = \CL y_n(\tau) \,d\tau + \sqrt{2} \Pi_n \,dw(\tau),\qquad
y_n(0) = \Pi_n x_0,
\]
and
\[
dx_n(\tau) = \CL x_n(\tau) \,d\tau+ \CN_n(x_n(\tau))\,
d\tau+ \sqrt{2} \Pi_n \,dw(\tau),\qquad
x_n(0) = \Pi_n x_0.
\]
Then, by finite-dimensional results, the stationary distributions
$\nu_n$ and $\mu_n$ of $y_n$ and $x_n$, respectively, are given by
\[
\nu_n = \nu\circ\Pi_n^{-1}
\quad\mbox{and}\quad
\frac{d\mu_n}{d\nu_n} = \exp(U\circ\hat\Pi_n).
\]
Define the semigroup $(\CP^n_\tau)_{\tau\geq0}$ on
$C_{\mathrm{b}}(\CH, \R)$ by $\CP^n_\tau\varphi(x) = \E_x
(\varphi(x_n(\tau)))$ for all $x\in E_n$ and $\varphi\in
C_{\mathrm{b}}(\CH, \R)$. Since the process $x_n$ is
$\mu_n$-reversible, we have
%
%
\begin{equation}\label{E:reversible-n}
\int_{\CH} \varphi(x) \CP^n_\tau\psi(x) \,d\mu_n(x)
= \int_{\CH} \psi(x) \CP^n_\tau\varphi(x) \,d\mu_n(x)
\end{equation}
for every $\varphi, \psi\in C_{\mathrm{b}}(\CH, \R)$.

We need to find the limit of (\ref{E:reversible-n}) as $n\to\infty$.
For this, we first show that $x_n \to x$ in $\CH_{1/4}$ uniformly
on bounded time intervals. Let $U>0$, then we have
\begin{eqnarray*}
\|x_n(\tau)-x(\tau)\|_{\CH_{1/4}}
&\leq&\biggl\| ( \Pi_n - I ) \biggl( S(\tau) x_0
+ \sqrt{2} \int_0^\tau S(\tau-\sigma) \,dW(\sigma)
\biggr) \biggr\|_{\CH_{1/4}} \\
&&{}
+ \biggl\| \int_0^\tau S(\tau-\sigma)
\bigl( \CN_n(x(\sigma)) - \CN(x(\sigma))\bigr) \,d\sigma\biggr\|
_{\CH_{1/4}} \\
&&{}
+ \biggl\| \int_0^\tau S(\tau-\sigma)
\bigl( \CN_n(x_n(\sigma)) - \CN_n(x(\sigma))\bigr) \,d\sigma
\biggr\|_{\CH_{1/4}} \\
&=&\!: I_1(\tau) + I_2(\tau) + I_3(\tau)
\end{eqnarray*}
for all $\tau\in[0,U]$.

From the definition of $\|\quark\|_{\CH_\alpha}$ and the asymptotics
of the eigenvalues of $\CL$ in Lemma \ref{L:eigenvalues} we get, for
any $\beta>\alpha$, that there is a $c>0$ such that the bound
\[
\| \Pi_n x - x \|_{\CH_\alpha}
\leq\frac{c}{n^{8(\beta-\alpha)}} \| x \|_{\CH_\beta}
\]
holds for all $x\in\CH_\beta$ and all $n\in\N$. Let
$\beta\in(1/4,3/8)$. Then we know from Proposition \ref{P:linear}
that $\tau\mapsto S(\tau) x_0 + \sqrt{2} \int_0^\tau S(\tau-\sigma)\,
dW(\sigma)$ is a continuous map from $[0,U]$ into $\CH_\beta$.
Combining these two statements, we find $\sup_{0\leq\tau\leq U}
I_1(\tau) \to0$ as $n\to\infty$.

From Lemma \ref{L:N-regularity} we know that $\CN$ is locally
Lipschitz from $\CH_{1/4}$ to $\CH_{-7/16}$. By
Lemma \ref{L:pi-n-hat}, part (c), there is then a constant $K_r>0$
such that
\[
\| \CN_n(x) - \CN_n(y) \|_{\CH_{-7/16}}
\leq K_r \| x - y \|_{\CH_{1/4}}
\]
for all $n\in\N$ and all $x$ and $y$ with
$\|x\|_{\CH_{1/4}},\|y\|_{\CH_{1/4}} \leq r$. Thus, the $\CN_n$ are
also locally Lipschitz.

We can find $p,q>1$ such that $p \cdot\frac{11}{16} < 1$ and $1/p +
1/q = 1$. For $I_2$ we then get
\begin{eqnarray*}
I_2(\tau)
&\leq&\int_0^\tau\bigl\| S(\tau-\sigma)
\bigl( \CN_n(x(\sigma)) - \CN(x(\sigma))\bigr) \bigr\|_{\CH
_{1/4}} \,d\sigma\\
&\leq&\int_0^\tau\| S(\tau-\sigma) \|_{\CH_{-7/16}\to
\CH_{1/4}}
\| \CN_n(x(\sigma)) - \CN(x(\sigma)) \|_{\CH_{-7/16}}\,
\,d\sigma\\
&\leq& c \biggl( \int_0^U \frac{1}{\sigma^{p 11/16}} \,d\sigma
\biggr)^{1/p}
\biggl( \int_0^U \| \CN_n(x(\sigma)) - \CN(x(\sigma))
\|_{\CH_{-7/16}}^q\,
d\sigma\biggr)^{1/q}.
\end{eqnarray*}
The right-hand side is independent of $\tau$ and converges to $0$ as
$n\to\infty$ by dominated convergence, using Lemma \ref{L:pi-n-hat},
part (b).

For $n\in\N$ define
\[
T_{n,r} = \inf\{\tau\in[0,U]
\mid \|x(\tau)\| > r \mbox{ or } \|x_n(\tau)\| > r
\}
\]
with the convention $\inf\varnothing= U$. For $\tau\leq T_{n,r}$ we
have
\[
I_3(\tau)
\leq K_r \int_0^\tau\| S(\tau-\sigma) \|_{\CH
_{-7/16}\to\CH_{1/4}} \| x_n(\sigma) - x(\sigma) \|
_{\CH_{1/4}} \,d\sigma
\]
and consequently
\begin{eqnarray*}
\|x_n(\tau)-x(\tau)\|_{\CH_{1/4}}
&\leq&\sup_{0\leq\sigma\leq U} \bigl( I_1(\sigma) + I_2(\sigma)
\bigr) \\
&&{} + c K_r \int_0^\tau\frac{1}{(\tau-\sigma)^{11/16}}
\| x_n(\sigma) - x(\sigma) \| \,d\sigma.
\end{eqnarray*}
Using Gronwall's lemma we can conclude
\[
\|x_n(\tau)-x(\tau)\|_{\CH_{1/4}}
\leq\sup_{0\leq\sigma\leq U} \bigl( I_1(\sigma) + I_2(\sigma)
\bigr)
\cdot
\exp\biggl(c K_r \int_0^U \frac{1}{\sigma^{11/16}} \,d\sigma\biggr)
\]
for all $\tau\leq T_{n,r}$. As we have already seen, the right-hand
side converges to $0$ as $n\to\infty$.

Now choose $r>0$ big enough such that ${\sup_{0\leq
\tau\leq U} }\|x(\tau)\| \leq r/4$. Then for sufficiently
large $n$ and all $\tau\leq T_{n,r}$ we have $\|x_n(\tau) -
x(\tau)\| \leq r/4$ and thus, ${\sup_{0\leq\tau\leq T_{n,r}}}
\|x_n(\tau)\| \leq r/2$. This implies $T_{n,r} = U$ for
sufficiently large $n$. Thus, we have $x_n \to x$ in
$\rC([0,U],\CH_{1/4})$ a.s.

Let $0 < \alpha< \beta< 1/2$. Define the semigroup
$(\CP_\tau)_{\tau\geq0}$ on $C_{\mathrm{b}}(\CH, \R)$ by
$\CP_\tau\varphi(x) = \E_x (\varphi(x(\tau)))$ for
all $x\in
\CH$ and $\varphi\in C_{\mathrm{b}}(\CH, \R)$. Then, by dominated
convergence, we have $\CP^n_\tau\varphi(\Pi_n x) \to\CP_\tau
\varphi(x)$
as $n\to\infty$. By Lemma \ref{L:stat-is-Hoelder}, $x\in
\rC^{1+\beta}$ for $\nu$-almost all $x$. Furthermore, $U\dvtx
\rC^{1+\alpha} \to\R$ is continuous and thus $U(\hat\Pi_n x) \to
U(x)$ as $n\to\infty$ for $\nu$-almost all $x$ by
Lemma \ref{L:pi-n-hat}, part (e).

Finally, let $c = \| \hat\Pi_n \|_{\rC^{1+\beta}_0 \to
\rC^{1+\alpha}_0}$. Using Fernique's theorem we can choose
$\varepsilon>0$
such that the function $\exp(\varepsilon c
\|x\|_{\rC^{1+\beta}}^2)$ is $\nu$-integrable. By
Lemma \ref{L:U-bound} we can find an $M>0$ such that $U(\hat\Pi_n x)
\leq\varepsilon\|\hat\Pi_n x\|_{\rC^{1+\alpha}}^2 + M \leq
\varepsilon c
\|x\|_{\rC^{1+\beta}}^2 + M$ for all $n\in\N$ and $\nu$-almost
all $x$. Then dominated convergence gives
\begin{eqnarray*}
\lim_{n\to\infty}
\int_{\CH} \varphi(x) \CP^n_\tau\psi(x) \,d\mu_n(x)
&=& \lim_{n\to\infty}
\int_{\CH} \varphi(\Pi_n x) \CP^n_\tau\psi(\Pi_nx)
\rmE^{U(\hat\Pi_n x)} \,d\nu(x) \\
&=& \int_{\CH} \varphi(x) \CP_\tau\psi(x) \rmE^{U(x)}
\,d\nu(x)\\
&=& \int_{\CH} \varphi(x) \CP_\tau\psi(x) \,d\mu(x)
\end{eqnarray*}
and using (\ref{E:reversible-n}) we get
\[
\int_{\CH} \varphi(x) \CP_\tau\psi(x) \,d\mu(x)
= \int_{\CH} \psi(x) \CP_\tau\varphi(x) \,d\mu(x).
\]
Thus, the process $x$ is $\mu$-reversible which is the required
result.
\end{pf}

Propositions \ref{P:local}, \ref{P:global} and \ref{P:stationary}
together imply all claims of Theorem \ref{T:answer} and so the proof
of the result is complete.

\section*{Acknowledgment}
The third named author thanks the Courant Institute, where
part of this article was completed.

%

%
\printaddresses

\end{document}